\documentclass[10pt,a4paper]{article}
\usepackage{amsfonts}
\usepackage{amssymb,amsthm, amsmath,graphicx,enumerate,mathtools}
\usepackage{bbm}

\usepackage{algorithm}
\usepackage[noend]{algpseudocode}

\usepackage{multirow}

\usepackage{color}
\usepackage{hyperref}

\usepackage{url}

\vfuzz10pt \hfuzz10pt

\newtheorem{theorem}{Theorem}
\newtheorem{definition}[theorem]{Definition}
\newtheorem{proposition}[theorem]{Proposition}
\newtheorem{corollary}[theorem]{Corollary}
\newtheorem{lemma}[theorem]{Lemma}

\theoremstyle{remark}

\newtheorem{example}[theorem]{Example}
\newtheorem{remark}[theorem]{Remark}

\def\C{\mathcal{C}}

\def\CaP{\mathbf{P}}

\def\FraC{\mathcal{C}}

\def\FraP{\mathcal{P}}
\def\LL{\mathbb{L}}
\def\GG{\mathbb{G}}
\def\VV{\mathbb{V}}
\def\k{\mathbbmss{k}}
\def\N{\mathbb{N}}
\def\R{\mathbb{R}}

\def\Q{\mathbb{Q}}

\def\Tint{\mathrm{T\mbox{-}int} }
\def\int{\mathfrak{int} }

\def\ap{\mathrm{Ap} }

\renewcommand{\H}{\mathcal H}

\title{A characterization of some families of Cohen--Macaulay, Gorenstein and/or Buchsbaum rings}

\author{J. I. Garc\'{\i}a-Garc\'{\i}a
 \footnote{
     Dpto. de Matem\'aticas/INDESS (Instituto Universitario para el Desarrollo Social Sostenible),
     Universidad de C\'adiz, E-11510 Puerto Real  (C\'{a}diz, Spain).
     E-mail: ignacio.garcia@uca.es. Partially supported by
     MTM2014-55367-P and
     Junta de Andaluc\'{\i}a group FQM-366. }\\
D. Mar\'{\i}n-Arag\'on
 \footnote{
     E-mail: daniel.marinaragon@alum.uca.es.
     Partially supported by Junta de Andaluc\'{\i}a group FQM-366.}
     \\
 A. Vigneron-Tenorio
 \footnote{
 	Dpto. de Matem\'aticas/INDESS (Instituto Universitario para el Desarrollo Social Sostenible), Universidad de C\'adiz,
 	E-11406 Jerez de la Frontera (C\'{a}diz, Spain).
     E-mail: alberto.vigneron@uca.es.
     Partially supported by MTM2015-65764-C3-1-P (MINECO/FEDER, UE) and
     Junta de Andaluc\'{\i}a group FQM-366.}
}

\sloppy
\date{}

\begin{document}

\maketitle

\begin{abstract}

We provide algorithmic methods to check the Cohen--Macaulayness, Buchsbaumness and/or Gorensteiness of some families of semigroup rings that are constructed from the dilation of  bounded convex polyhedrons of $\R^3_{\geq}$.
Some families of semigroup rings are given satifying these properties.

\smallskip
{\small \emph{Keywords:}
Affine semigroup,
Buchsbaum ring,
Cohen--Macaulay ring,
Gorenstein ring.
}

\smallskip
{\small \emph{MSC-class:} 13H10 (Primary), 20M14, 52B10,  (Secondary).}
\end{abstract}

\section*{Introduction}

A semigroup is a pair $(S,+)$ with $S$ a nonempty set and $+$ a binary operation defined on $S$ verifying the associative and commutative laws.
In addition, if there exists in $S$ an element, usually denoted by $0$, such that $a+0=a$ for all $a\in S$, the pair $(S,+)$ is a monoid.
Given a subset $A$ of a monoid $S$, the monoid generated by $A$, denoted by $\langle A\rangle$, is the least (with respect to inclusion) submonoid of $S$ containing $A$.
When $S=\langle A\rangle $, it is said that $S$ is generated by $A$, or that $A$ is a system of generators of $S$. The monoid $S$ is finitely generated if it has a finite system of generators.
Finitely generated submonoids of $\N^n$ are known as affine semigroups.

Given a semigroup $S$ and a field $\k$, define the semigroup ring
$\k[S]=\oplus_{s\in S} \k  y_s$ with the addition  performed component-wise and the
product carried out according to the rule $y_sy_{s'}=y_{s+s'}$.
If $S$ is minimally generated by $\{s_1,\dots,s_r\}\subset \N^n,$ 
then $\k[S]$ is isomorphic to the subalgebra of the polynomial ring $\k[x_1,\ldots ,x_{n}]$ generated by
$x^{s_1},\dots,x^{s_r}$ with $x^\alpha=x_1^{\alpha_1}\cdots x_n^{\alpha_n}$ where $\alpha=(\alpha_1,\dots,\alpha_n)\in\N^n$ (see \cite{TrungHoa}).
It is well known that many properties of the ring $\k[S]$ are determined by certain conditions and properties of $S.$ In particular, a semigroup $S$ is called a Cohen--Macaulay (resp. Gorenstein, resp. Buchsbaum) semigroup if $\k[S]$ is a Cohen--Macaulay (resp. Gorenstein, resp. Buchsbaum) ring.

Although the Cohen--Macaulayness, Gorensteiness, and Buchsbaumness of rings have been widely studied, there are few methods for searching for this kind of structure (see \cite{RosalesBuchs}, \cite{MR1732040}, \cite{RosalesCM}, \cite{MR881220} and references therein).
The main goal of the present work is to provide methods to construct affine semigroups satisfying these three properties by means of convex polyhedron semigroups.

Convex polytope semigroups are obtained as follows.
Assume that  $\CaP\subset \R^n_{\geq}$ is a bounded convex polytope. Then
the set $\cup_{j\in \N} j\CaP$ is a submonoid of $\R^n_{\geq}$ and the set
$\FraP= \cup_{j\in \N} j \CaP\cap \N^n$ is a submonoid of $\N^n$  (see \cite{ACBS}).
Every submonoid of $\N^n$ that can be obtained in this way is a convex polytope semigroup, and
if $n=3$, the term polyhedron is used instead of polytope.

In order to study the Cohen--Macaulayness, Buchsbaumness, and Gorensteinness of affine simplicial convex polyhedron semigroups, we need to describe the elements in the minimal cone, which includes the semigroup, that are not in the semigroup. In particular, we give a geometric description and a decomposition of the set $L_{\R_\ge}(\CaP)\setminus \cup_{j\in\N} j\CaP.$

The results of this work are illustrated with several examples, where the third-party software,
such as
{\tt Normaliz} (\cite{normaliz}) and 
{\tt Macaulay2} (\cite{Macaulay2})
are used in conjunction with the library \cite{PROGRAMA} developed by the authors in Python (\cite{python}).

This work is organized as follows. In Section \ref{s3},
the concept of convex polytope/polyhedron semigroup is defined and
some basic notions and results used  in the rest of the work are given.
In Section \ref{s4}, the set $L_{\R_\ge}(\CaP)\setminus \cup_{j\in\N} j\CaP$ is completely described by using geometric tools.
In Section \ref{sectionCM},
the Cohen--Macaulay property is studied and
an algorithm is presented that checks for this property in affine simplicial convex polyhedron semigroups.
A family of Gorenstein affine semigroups is given in Section \ref{gorenstein}. Lastly,
in Section \ref{buchsbaum},  Buchsbaum affine simplicial convex polyhedron semigroups are characterized and a family of such semigroups is obtained.

\section{Some definitions and tools}\label{s3}

Let $\R$, $\Q$ and $\N$ be the sets of real numbers, rational numbers, and nonnegative integers, respectively.
Denote by $\R_\geq$ and $\Q_\geq$ the set of nonnegative elements of $\R$ and $\Q$.
A point $P$ is said to be rational whenever $P\in\Q^n$. We denote by $O$ the origin of $\R^n.$
A rational line is a straight line
with at least two rational elements.
A ray is a vector line, and whenever it is also a rational line, it is called a rational ray.

Define the cone generated by $A\subseteq \R^n_{\geq}$ by
$$
L_{\R_{\geq}}(A)=\left\{\sum_{i=1}^p q_ia_i\mid p\in\N, q_i\in \R_{\geq}, a_i\in A \right\}.
$$
The set of its extremal rays is denoted by $\mathcal{T}_A$, and
the set $L_{\R_{\geq}}(A)\cap \N^n$ is denoted by $\FraC_A$. A cone is called a rational cone if all its extremal rays are rational rays.

For every set $\chi\subset \R_\ge^n$,
denote
by $\partial \chi$ its topological boundary,
by $\Tint(\chi)$ its topological interior, and by $\int(\chi)$ the set $ \chi\setminus \mathcal{T_\chi}.$
For any $i$ nonnegative integer, we use $[i]$ to denote the set $\{1,\ldots ,i\}$.

If $P=(p_1,\dots,p_n), Q=(q_1,\dots,q_n)\in\R^n$ and $j\in \R$, then $jP=(jp_1,\dots,jp_n)$ and $P+Q=(p_1+q_1,\dots,p_n+q_n)$. For every subset $A\subset \R^n,$  $jA$ denotes the set $\{jP\mid P\in A\},$
and $P+A$ (or $A+P$) denotes the set $\{P+Q\mid Q\in A\}.$
Let $\H(A)$ be the convex hull determined by $A$, that is, the smallest convex set containing $A.$
In general, for every $P\in\Q_{\geq}^n$, define $h_P=\min\{h\in \N | h P\in \N^n\}.$
For a given $P\in\R^n,$ we denote by $\tau_P$ the ray containing $P.$

The main characteristic of convex polytope semigroups is that in order to know whether a given element $P$ of $\N^n$ is in the semigroup, it is enough to know the defining equations of the facets of the convex polytope $\CaP$.
From these equations, by computing the sets $\tau_P\cap \CaP=\overline{AB}$ and
$\{k\in\N|\frac{||P||}{||B||}\leq k \leq \frac{||P||}{||A||}\}$
(with $||X||$ the Euclidean norm of $X,$ that is, $||X||^2=||(x_1,\dots,x_n)||^2=\sum_{j=1}^n x_j^2$),
we get that the element $P$ belongs to $\FraP$ if and only if
$\{k\in\N|\frac{||P||}{||B||}\leq k \leq \frac{||P||}{||A||}\}\neq \emptyset$.
This process can be performed even when $\FraP$ is not finitely generated.
Its well known that $\FraP$ is a finitely generated semigroup if and only if $\tau\cap\CaP\cap\Q^n$ is non-empty for all $\tau\in \mathcal{T}_\CaP$ (see \cite[Corollary 2.10]{Brunz}).

We now recall some definitions that will be necessary for the comprehension of Sections 3, 4, and 5. If $R$ is a Noetherian local ring, we say that a finite $R$-module $M\neq 0$ is Cohen--Macaulay whenever ${\rm depth}(M)=\dim(M)$. The set $R$ is a Cohen--Macaulay ring if it is a Cohen--Macaulay module (see \cite{MR1251956}). If $R$ is a local ring with finite injective dimension as $R$-module, then it is called a Gorenstein ring. Finally, we define a Buchsbaum ring as a Noetherian $R$-module such that every system of parameters of $M$ is a weak $M$-sequence and a Buchsbaum ring as a Buchsbaum module, that is, a module over itself (see \cite{MR0153708}).

\section{Computing a decomposition of $L_{\R_\ge}(\CaP)\setminus \cup_{j\in\N}j\CaP$
}\label{s4}

Let $\CaP\subset \R_\ge ^3$ be a convex polyhedron and $\FraP$ be its associated semigroup.
In this section, we give a geometric description of the set $L_{\R_\ge}(\CaP)\setminus \cup_{j\in\N}j\CaP;$ this description allows us to give a decomposition of this set as a union of convex polyhedra.
For example, Figure \ref{ejemplo_figura} illustrates the sets $L_{\R_\ge}(\CaP)$ and $\cup_{j=0}^9j\CaP$ for a polyhedron $\CaP.$
Two vertices of $\CaP$ are called adjacent if they are connected by an edge.
\begin{figure}[h]
\begin{center}
\begin{tabular}{|c|}\hline
\includegraphics[scale=.25]{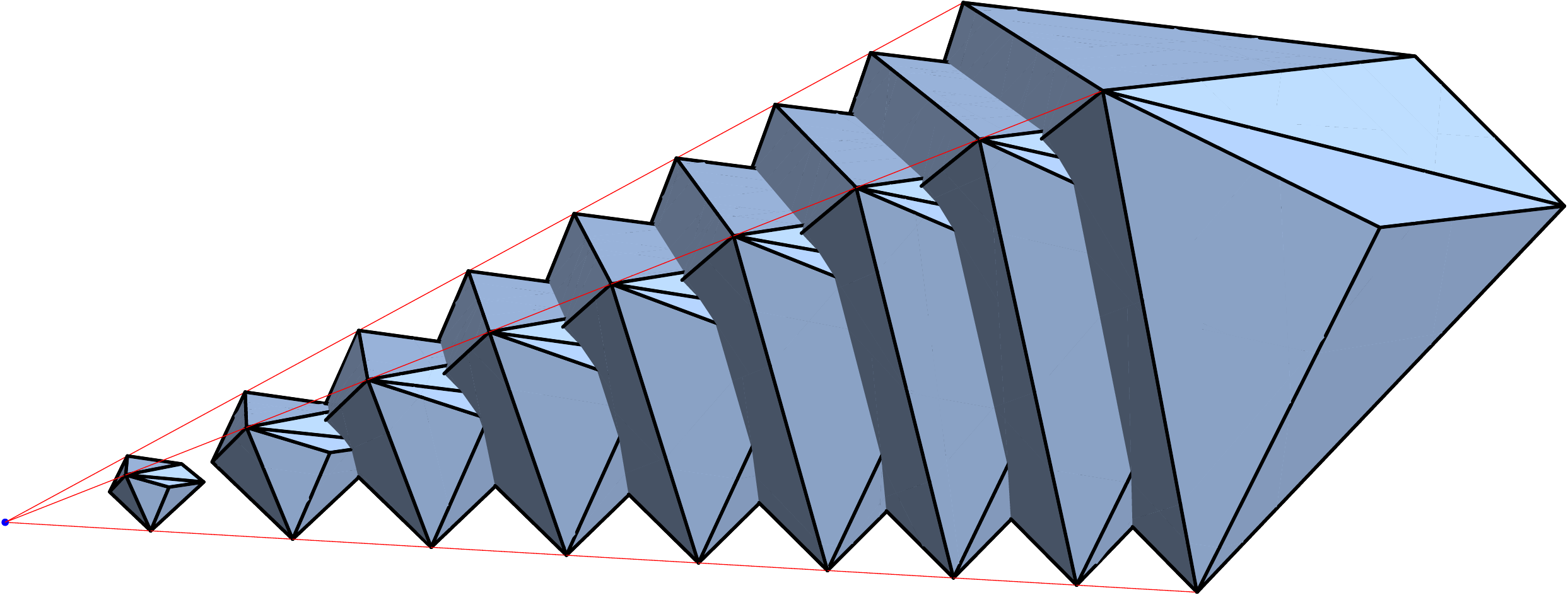}\\
\hline
\end{tabular}
\caption{Example of $L_{\R_\ge}(\CaP)$ and $\cup_{j=0}^9j\CaP.$}\label{ejemplo_figura}
\end{center}
\end{figure}

For any $j\in \N,$ we denote by $\LL_j=\H(j\CaP\cup(j+1)\CaP)\setminus (j\CaP\cup(j+1)\CaP),$ by $\overline{\LL}_j$ its topological closure, and by $\LL$ and $\overline{\LL}$ the sets $\cup_{j\in \N} \LL_j$ and $\cup_{j\in \N} \overline{\LL}_j$, respectively. Note that $\LL\cap \N^3=\FraC_\FraP\setminus \FraP.$
In general, the sets $\overline{\LL}_j$ are not convex, althought the can be expressed as a union of convex polyhedron (see (\ref{igualdad})). This allow us to use the concepts of vertex, edge and face of these sets.

We write $\mathcal{T}_\CaP=\{\tau_1,\ldots ,\tau_t\}$ for the set of extremal rays of $L_{\R_\ge}(\CaP).$ The symbol $\sigma$ denotes the permutation $(123\cdots t),$ that is, for $i\in[t-1],$ $\sigma(i)=i+1,$ and $\sigma(t)=1,$ and we assume that the rays in $\mathcal{T}_\CaP$ are arranged in such a way that $\tau_i$ and $\tau_{\sigma(i)}$ are in the same face of $L_{\R_\geq}(\CaP)$ for every $i\in[t].$

If $\mathcal{V}$ is the set of vertices of $\CaP$, $P\in \mathcal{V}$, and $\tau_P$ is the ray containing $P$, then there exists a unique $P'\in\CaP$ (which can be equal to $P$) such that
$\tau_P\cap \CaP$ is equal to the segment $\overline{PP'}.$
So, $\mathcal{V}$ is equal to $\mathcal{W}\sqcup  \mathcal{W}_1\sqcup \mathcal{W}_2\sqcup\mathcal{V}_1\sqcup \mathcal{V}_2$ with
$\mathcal{W}=\{P\in \mathcal{V}\mid P=P'\}$,
$\mathcal{W}_1=\{P\in \mathcal{V}\cap\cup_{i\in[t]}\tau_i \mid ||P||<||P'|| \}$,
$\mathcal{W}_2=\{P\in \mathcal{V}\cap\cup_{i\in[t]}\tau_i \mid ||P'||<||P||\}$,
$\mathcal{V}_1=\{ P\in \mathcal{V}\setminus(\cup_{i\in[t]}\tau_i)\mid ||P||<||P'|| \},$ and
$\mathcal{V}_2=\{  P\in \mathcal{V}\setminus(\cup_{i\in[t]}\tau_i)\mid ||P'||< ||P|| \}.$
It is straightforward to prove that for every $P\in \mathcal{V}:$
\begin{itemize}
\item $P\in \mathcal{W}$ iff $\tau_P\in\mathcal{T}_\CaP$ and $\tau_P\cap \CaP=\{P\};$
\item $P\in \mathcal{W}_1$ iff $\tau_P\in\mathcal{T}_\CaP$ and $\tau_P\cap \CaP=\overline{PP'}$ with $||P||<||P'||;$
\item $P\in \mathcal{W}_2$ iff $\tau_P\in\mathcal{T}_\CaP$ and $\tau_P\cap \CaP=\overline{P'P}$ with $||P'||<||P||;$
\item $P\in \mathcal{V}_1$ iff $\tau_P\notin\mathcal{T}_\CaP$ and $\tau_P\cap \CaP=\overline{PP'}$ with $||P||<||P'||;$
\item $P\in \mathcal{V}_2$ iff $\tau_P\notin\mathcal{T}_\CaP$ and $\tau_P\cap \CaP=\overline{P'P}$ with $||P'||<||P||.$
\end{itemize}

We add new points to the sets $\mathcal{V}_1$ and $\mathcal{V}_2$.
For every $P,Q\in \mathcal W$ satisfying that if $P\in\tau_i$ then $Q\not \in\tau_{\sigma(i)}\cup \tau_{\sigma(i-1)}$, consider the segment $\overline {PQ}$. The set $\overline {PQ}\setminus\{P,Q\}$ is included in  $\Tint( L_{\R_{\geq}}(\CaP) )$. Thus, for every $A \in \overline {PQ}\setminus\{P,Q\}$ the set $\tau_A\cap \CaP$ is a nonempty segment and there exists $k\in\N\setminus\{0\}$ such that $k\overline {PQ} \cap (k+1) \CaP \neq \emptyset$ or  $k\overline {PQ} \cap (k-1) \CaP \neq \emptyset$. We consider the minimum of such $k$:
if $k\overline {PQ} \cap (k-1) \CaP\neq \emptyset$, take $B\in k\overline {PQ} \cap (k-1) \CaP$ and add $\frac 1 k B$ to $\mathcal{V}_1$; otherwise, add $\frac 1 k B$ to $\mathcal{V}_2$.

Note that for every $A\in\mathcal{V}_1\cup \mathcal{W}_1$ the set $\overline{OA}\cap \CaP=\{A\}$ and for every $A\in\mathcal{V}_2\cup \mathcal{W}_2$ the set $\overline{OA}\cap \CaP$ is a segment. We can say that the points of $\mathcal{V}_2\cup \mathcal{W}_2$ are hidden for an observer located at the origin.

From now on, we denote by $P_i$ the point in $(\mathcal{W}\cup \mathcal{W}_1)\cap \tau_i$ for each $i\in [t].$ The next result describes $\LL$ when $\tau_i\cap \CaP$ is a segment for every $i\in[t].$

\begin{lemma}\label{Csemig}
Let $\CaP\subset \R^3_\ge$ be a convex polyhedron such that $\mathcal{W}=\emptyset.$ Then there exists $k\in\N$ satisfying $\LL=\cup_{j=0}^{k-1} \LL_j\subset \H(\{O,kP_1,\ldots,kP_t\}).$
\end{lemma}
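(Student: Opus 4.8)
The plan is to show two things: first, that $\LL$ is bounded, which immediately forces $\LL = \cup_{j=0}^{k-1}\LL_j$ for some $k\in\N$ (since the $\LL_j$ lie in ``shells'' that march off to infinity, only finitely many can meet a bounded set); and second, that this bounded set is contained in the simplex-like cone $\H(\{O,kP_1,\dots,kP_t\})$, after possibly enlarging $k$. The hypothesis $\mathcal{W}=\emptyset$ is what makes $\LL$ bounded: it guarantees that along every extremal ray $\tau_i$, the intersection $\tau_i\cap\CaP$ is a genuine nondegenerate segment $\overline{P_iP_i'}$ with $\|P_i\|<\|P_i'\|$, so $P_i\in\mathcal{W}_1$. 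Writing $\|P_i'\| = \lambda_i\|P_i\|$ with $\lambda_i>1$, the dilates $j\CaP$ cover the portion of $\tau_i$ from $j\|P_i\|$ to $j\lambda_i\|P_i\|$, and consecutive dilates $j\CaP$ and $(j+1)\CaP$ overlap along $\tau_i$ once $j\lambda_i\|P_i\|\ge (j+1)\|P_i\|$, i.e. once $j\ge 1/(\lambda_i-1)$.

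First I would make the overlap precise. Set $\lambda=\min_i\lambda_i>1$ and let $k_0=\lceil 1/(\lambda-1)\rceil$. I claim that for $j\ge k_0$ the union $j\CaP\cup(j+1)\CaP$ is already convex, hence equals its convex hull, so $\LL_j=\varnothing$. The key point is that along \emph{every} ray $\tau$ through the origin that meets $\CaP$, the segment $\tau\cap\CaP = \overline{AB}$ satisfies $\|B\|/\|A\|\ge\lambda$: indeed $\|A\|/\|B\|$ is a continuous function on the (compact) set of such rays, its reciprocal attains a maximum that — because $\mathcal{W}=\emptyset$ and $\CaP$ is a polyhedron whose facets have rational/linear equations — is bounded above by $\max_i\lambda_i$... actually more carefully, one shows the ratio is \emph{minimized} at an extremal ray or an edge direction, using convexity of $\CaP$ and the facet description from Section~\ref{s3}; this gives $\|B\|/\|A\|\ge\lambda$ uniformly. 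Then for $j\ge k_0$ and any such ray, $j\CaP$ covers $[j\|A\|, j\|B\|]$ on $\tau$ and $(j+1)\CaP$ covers $[(j+1)\|A\|,(j+1)\|B\|]$, and $(j+1)\|A\|\le j\lambda\|A\|\le j\|B\|$, so the two intervals overlap; a standard star-shapedness argument (every point of $j\CaP\cup(j+1)\CaP$ sees, along its own ray, a connected interval) then shows the union is convex. Hence $\LL_j=\varnothing$ for $j\ge k_0$, so $\LL=\cup_{j=0}^{k_0-1}\LL_j$.

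Next I would handle the containment. Each $\LL_j\subseteq\H(j\CaP\cup(j+1)\CaP)$, and $j\CaP\cup(j+1)\CaP$ is contained in the cone $L_{\R_\ge}(\CaP)$ truncated at radius $(k_0)\max_i\|P_i'\|$; more simply, $\H(j\CaP\cup(j+1)\CaP)\subseteq (j+1)L_{\R_\ge}(\CaP)\cap\{\,X: X/\|\cdot\|\text{-along its ray lies within }\CaP\text{ by radius }\le (j+1)\max_i\lambda_i\|P_i\|\,\}$. Taking $k$ large enough that $k\ge k_0$ and $k$ exceeds $(k_0)\max_i\lambda_i$ times the relevant radial bound, every point of $\cup_{j=0}^{k_0-1}\LL_j$ lies inside the pyramid $\H(\{O,kP_1,\dots,kP_t\})$: this pyramid is exactly $L_{\R_\ge}(\CaP)$ intersected with the halfspace ``radially below $kP_i$'' on each extremal ray, which contains the whole cone out to radius $k\min_i\|P_i\|$, and for $k$ big this swallows the bounded set $\LL$. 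Replacing $k_0$ by this larger $k$ (and noting $\LL_j=\varnothing$ for $j\ge k_0$ still holds, so $\LL=\cup_{j=0}^{k-1}\LL_j$ trivially) finishes the proof.

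The main obstacle is the uniform overlap estimate: proving that $\|B\|/\|A\|$ is bounded below by a constant $\lambda>1$ over \emph{all} rays meeting $\CaP$, not just the extremal ones. One must rule out the possibility that for some direction the entry and exit radii become arbitrarily close — which would happen exactly at a point of $\mathcal{W}$, and this is where the hypothesis $\mathcal{W}=\varnothing$ is essential. Making this rigorous requires either a compactness argument on the space of rays through $\CaP$ together with continuity of the entry/exit radial functions (which are piecewise-rational in the facet equations, hence continuous where $\CaP$ has nonempty radial extent), or a direct polyhedral argument reducing the infimum of the ratio to its value on finitely many edges and extremal rays of $\CaP$.
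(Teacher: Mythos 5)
Your strategy is the same as the paper's: find $k_0$ so that the consecutive dilates $j\CaP$ and $(j+1)\CaP$ overlap radially for all $j\ge k_0$, deduce that $\cup_{j\ge k_0}j\CaP$ is convex so that $\LL_j=\emptyset$ there, and then place the finitely many remaining $\LL_j$ inside the pyramid $\H(\{O,kP_1,\ldots,kP_t\})$. You correctly isolate the one nontrivial point, namely a uniform lower bound $\|B\|/\|A\|\ge\lambda>1$ for the exit/entry radii over \emph{all} rays meeting $\CaP$ (the paper's own proof only records the overlap condition on the extremal rays and then asserts convexity of the union, so it is silent on exactly this point). But you leave that bound unproved and flag it yourself as the main obstacle; moreover, the first route you propose (compactness plus continuity) does not obviously work, because the entry and exit radii need not vary continuously up to the boundary of the set of rays meeting $\CaP$, and it is precisely in such limits that the ratio could degenerate to $1$. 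As written, then, the argument has a gap at its central step. (A smaller issue: radial connectedness of $j\CaP\cup(j+1)\CaP$ on each ray is not by itself convexity of the union; one also needs that $\H(j\CaP\cup(j+1)\CaP)$ does not extend radially past the inner boundary of $j\CaP$ or the outer boundary of $(j+1)\CaP$, which follows by intersecting the supporting half-spaces of $\CaP$ with positive constant term after dilation.)

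The gap closes with the polyhedral reduction you allude to, and it reduces everything to the extremal rays, which is why the paper's computation with the $\overline{P_iP_i'}$ alone suffices. Since $\mathcal{W}=\emptyset$, each $\tau_i\cap\CaP=\overline{P_iP_i'}$ with $\rho_i:=\|P_i'\|/\|P_i\|>1$; set $\mu=\min_i\rho_i>1$. Any ray $\tau$ meeting $\CaP$ lies in $L_{\R_\ge}(\{P_1,\ldots,P_t\})$, so $\tau$ contains a point $Z=\sum_i c_iP_i\in\H(\{P_1,\ldots,P_t\})\subseteq\CaP$ with $c_i\ge 0$ and $\sum_i c_i=1$; then $\mu Z=\sum_i c_i(\mu P_i)$ also lies in $\CaP$ because each $\mu P_i\in\overline{P_iP_i'}$. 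Hence $\|A\|\le\|Z\|$ and $\|B\|\ge\mu\|Z\|\ge\mu\|A\|$, so $\|B\|/\|A\|\ge\mu$ uniformly, and $k_0=\lceil 1/(\mu-1)\rceil=\max_i\lceil 1/(\rho_i-1)\rceil$, computed from the extremal rays alone, gives the overlap on every ray. For the containment you also do not need to enlarge $k$ as in your last paragraph: a point of $\LL_j$ with $j<k_0$ lies on its ray strictly between $j\CaP$ and $(j+1)\CaP$, hence at radius less than $(j+1)\|A\|\le k_0\|Z\|$, and $k_0Z=\sum_i c_i(k_0P_i)\in\H(\{O,k_0P_1,\ldots,k_0P_t\})$, so $\LL\subseteq\H(\{O,k_0P_1,\ldots,k_0P_t\})$ already with $k=k_0$.
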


\begin{proof}
For any $P_i\in\mathcal{W}_1,$ denote by $\overline{P_iP'_i}$ the segment $\tau_i\cap \CaP.$ Note that for any $j\ge 0$ and $i\in[t],$ $\tau_i\cap j\CaP$ is equal to $j\overline{P_iP'_i},$ and there exists a minimal $\kappa_0\in \N$ such that $k\overline{P_iP'_i}\cap (k+1)\overline{P_iP'_i}\neq \emptyset$ for every $i\in[t]$ and $k\ge \kappa_0.$ So $\cup _{j\ge \kappa_0}j\CaP$ is a convex set, and $L_{\R_\ge}(\CaP)\setminus \cup_{j\in\N}j\CaP\subset \cup_{j=0}^{\kappa_0-1} \LL_j.$
Hence, the set $\LL$ is equal to $\cup_{j=0}^{\kappa_0-1} \LL_j,$ and it is included in $\H(\{O,\kappa_0P_1,\ldots,\kappa_0P_t\}).$
\end{proof}

The previous lemma can be generalized easily for any dimension. A convex polytope semigroup $\FraP$ such that $\FraC_\FraP\setminus\FraP$ is finite is called a $\C_\FraP$-semigroup. Some properties of this kind of semigroup have been studied recently in \cite{Csemigroup}.

In the following, we assume that there exists at least one ray $\tau_i$ in $\mathcal{T}_\CaP$ such that $\tau_i\cap \CaP$ is a point, that is, $\mathcal{W}\neq\emptyset.$ The next two lemmas prove some properties of the intersection of the polyhedra $k\CaP$ and $(k+1)\CaP$ for any integer $k\gg 0.$

\begin{lemma}\label{kcota}
Let $Q\in\mathcal{V}_1\cup\mathcal{W}_1$ and $Q'\in\mathcal{V}_2\cup\mathcal{W}_2.$ Then there exists  $\kappa_0\in \N$ such that for every integer $k\geq \kappa_0$, $(k+1)Q\in \Tint(k\CaP)$ and $kQ'\in\Tint((k+1)\CaP)$.
\end{lemma}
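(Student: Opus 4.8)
The claim is a statement about points $Q$ that are "closer to the origin than their partner on the ray" (the classes $\mathcal{V}_1\cup\mathcal{W}_1$) and points $Q'$ that are "farther" (the classes $\mathcal{V}_2\cup\mathcal{W}_2$). The plan is to treat the two assertions $\bigl((k+1)Q\in\Tint(k\CaP)\bigr)$ and $\bigl(kQ'\in\Tint((k+1)\CaP)\bigr)$ by essentially the same argument, since scaling a polyhedron by $k$ and comparing with scaling by $k+1$ is a ratio computation, and both $Q$ and $Q'$ lie on rays $\tau_Q,\tau_{Q'}$ that meet $\CaP$ in a genuine segment (that is the content of $Q\notin\mathcal{W}$, which holds for all classes appearing here).

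First I would fix $Q\in\mathcal{V}_1\cup\mathcal{W}_1$ and write $\tau_Q\cap\CaP=\overline{QQ'}$ with $\|Q\|<\|Q'\|$ (using the bulleted characterisations just stated in the text). Then for every $j\ge 0$ one has $\tau_Q\cap j\CaP=j\overline{QQ'}=\overline{jQ\,jQ'}$, i.e.\ the portion of the ray $\tau_Q$ inside $j\CaP$ is exactly the set of points at distance between $j\|Q\|$ and $j\|Q'\|$ from $O$. Hence $(k+1)Q\in\Tint(k\CaP)$ will follow once I show $(k+1)Q$ is \emph{strictly} between $kQ$ and $kQ'$ along $\tau_Q$ \emph{and} that $(k+1)Q$ is not on $\partial\CaP$ in the "transverse" directions — but since $(k+1)Q$ lies on $\tau_Q$ and $\tau_Q\cap k\CaP$ is a segment with $(k+1)Q$ in its relative interior, $(k+1)Q$ is in fact in the topological interior of $k\CaP$: a ray through the origin that meets a convex body in a segment meets the interior in the relative interior of that segment (here I use that $O$ itself lies in $L_{\R_\ge}(\CaP)$, together with convexity of $k\CaP$, to promote "relative interior of the chord" to "topological interior"; strictly I should note $\tau_Q$ is not contained in a supporting hyperplane of $k\CaP$, which is automatic because the chord is nondegenerate). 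So the whole thing reduces to the scalar inequality
\[
k\|Q\| < (k+1)\|Q\| < k\|Q'\|,
\]
whose left half is trivial and whose right half is $(k+1)/k<\|Q'\|/\|Q\|$, which holds for all $k\ge\kappa_0$ with $\kappa_0:=\lceil \|Q\|/(\|Q'\|-\|Q\|)\rceil+1$ since $\|Q'\|/\|Q\|>1$. The second assertion is symmetric: writing $\tau_{Q'}\cap\CaP=\overline{Q''Q'}$ with $\|Q''\|<\|Q'\|$, the point $kQ'$ lies strictly between $(k+1)Q''$ and $(k+1)Q'$ along $\tau_{Q'}$ as soon as $(k+1)\|Q''\|<k\|Q'\|<(k+1)\|Q'\|$, i.e.\ $(k+1)/k<\|Q'\|/\|Q''\|$, again a cofinite condition on $k$. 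Taking $\kappa_0$ to be the maximum of the finitely many thresholds obtained this way (one per relevant vertex — but the lemma as stated fixes a single $Q$ and a single $Q'$, so only two thresholds) gives the result.

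**Main obstacle.** The only genuinely non-routine point is the passage from "relative interior of the chord $\tau_Q\cap k\CaP$" to "topological interior of $k\CaP$". This needs that $\tau_Q$ is not contained in a face of $k\CaP$ of dimension $<3$; equivalently that the chord $\overline{kQ\,kQ'}$ is not contained in $\partial(k\CaP)$. I expect this to be handled by observing that $O\in L_{\R_\ge}(\CaP)$ and $kQ$ is in the relative interior of the chord, so a small neighbourhood of $(k+1)Q$ inside the $2$-plane spanned by $\tau_Q$ and any transverse direction still meets $k\CaP$ — more cleanly, one can argue that if $(k+1)Q\in\partial(k\CaP)$ then there is a supporting hyperplane $H$ at $(k+1)Q$, and since $(k+1)Q$ is strictly between $kQ$ and $kQ'$ on the line $\tau_Q$, the hyperplane $H$ would have to contain all of $\overline{kQ\,kQ'}$, contradicting that $kQ$ and $kQ'$ are distinct points of $k\CaP$ with $kQ$ interior to the chord unless the chord lies in $H$; but then $\overline{QQ'}\subset H/k$ is an edge or face of $\CaP$ lying in the ray $\tau_Q$ through the origin, which forces... — in the paper's setting this degenerate configuration is presumably excluded or else the statement is still fine because the chord's relative interior is open in $k\CaP$ restricted to that face, and one re-runs the argument inside the affine hull of that face. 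I would state the clean version (supporting-hyperplane contradiction) and, if a degenerate face can genuinely occur, remark that the conclusion $\Tint$ should be read relative to the face, or simply note it cannot occur because $\CaP\subset\R^3_\ge$ is full-dimensional in the cases under consideration. Everything else is the elementary scalar estimate $(k+1)/k\to 1$ together with the equality $\tau\cap j\CaP=j(\tau\cap\CaP)$, which is immediate from $j\CaP$ being the image of $\CaP$ under the linear map $x\mapsto jx$ fixing every ray through $O$.
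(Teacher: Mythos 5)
Your proof is correct and follows essentially the same route as the paper: reduce the claim to the scalar inequality $k\|Q\|<(k+1)\|Q\|<k\|R\|$ along the ray $\tau_Q$ (using $\tau_Q\cap j\CaP=j(\tau_Q\cap\CaP)$), observe it holds for all $k$ beyond a finite threshold since $(k+1)/k\to 1$, and take the maximum over the finitely many thresholds. Your extra discussion of promoting ``relative interior of the chord'' to ``topological interior of $k\CaP$'' addresses a point the paper passes over silently, and is a reasonable refinement rather than a deviation.
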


\begin{proof}
Let $\tau_Q$ be the ray containing $Q$.
There exists $R$ such that $\tau_Q\cap \CaP=\overline{QR}$ and $||Q||<||R||$.
This implies that there exists a least $k_Q\in\N$ such that $k_Q||Q||<(k_Q+1)||Q||<k_Q||R||$,
and therefore $(k_Q+1)Q\in \Tint(k_Q\CaP)$. Similarly, for every
$Q'\in\mathcal{V}_2\cup\mathcal{W}_2$, there exists a $k_{Q'}\in\N$ such that
$k_{Q'}Q'\in\Tint((k_{Q'}+1)\CaP)$. We take $\kappa_0$ to be the maximum of the set
$\{k_Q\mid Q\in \mathcal{V}_1\cup\mathcal{W}_1\}\cup\{ k_{Q'}\mid Q'\in\mathcal{V}_2\cup\mathcal{W}_2 \}$.
\end{proof}

Note that $\kappa_0$ is the first time that all the vertices in $(k+1)(\mathcal{V}_1\cup\mathcal{W}_1)$ are in $\Tint(k\CaP)$ and all the vertices in $k(\mathcal{V}_2\cup\mathcal{W}_2)$ belong to $\Tint((k+1)\CaP)$ for every integer $k$.

\begin{lemma}\label{lemak0}
For every integer $k\geq \kappa_0$, every vertex of $\overline{\LL}_k$ belongs to $k\mathcal{W}\cup (k+1)\mathcal{W},$ or to an edge of
$(k+1)\overline{PQ}$ of $(k+1)\CaP,$ or is contained in an edge $k\overline{PQ'}$ of $k\CaP,$ with $P\in \mathcal{W},$ $Q\in \mathcal{V}_1\cup\mathcal{W}_1$ and $Q'\in \mathcal{V}_2\cup\mathcal{W}_2.$

\end{lemma}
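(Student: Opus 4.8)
## Proof Proposal for Lemma \ref{lemak0}

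The plan is to analyze the convex hull $\H(k\CaP \cup (k+1)\CaP)$ for $k \geq \kappa_0$ and track exactly which points can become vertices of its ``shell'' $\overline{\LL}_k$. The starting observation is that $\overline{\LL}_k = \H(k\CaP\cup(k+1)\CaP)$ minus the interior of $k\CaP\cup(k+1)\CaP$, so every vertex $v$ of $\overline{\LL}_k$ is in particular a vertex of the polytope $\H(k\CaP\cup(k+1)\CaP)$. A standard fact about convex hulls of two polytopes is that every vertex of $\H(A\cup B)$ is either a vertex of $A$ or a vertex of $B$ (it cannot be a proper convex combination of points of $A\cup B$ without being redundant). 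So first I would reduce to: every vertex of $\overline{\LL}_k$ lies in $k\mathcal{V}\cup(k+1)\mathcal{V}$, where $\mathcal{V}$ is the vertex set of $\CaP$.

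Next I would use the decomposition $\mathcal{V}=\mathcal{W}\sqcup\mathcal{W}_1\sqcup\mathcal{W}_2\sqcup\mathcal{V}_1\sqcup\mathcal{V}_2$ together with Lemma \ref{kcota} to eliminate the interior cases. For $Q\in\mathcal{V}_1\cup\mathcal{W}_1$ and $k\geq\kappa_0$ we have $(k+1)Q\in\Tint(k\CaP)$, hence $(k+1)Q$ is an interior point of $k\CaP\cup(k+1)\CaP$ and cannot be a vertex of $\overline{\LL}_k$; symmetrically, $kQ'\in\Tint((k+1)\CaP)$ for $Q'\in\mathcal{V}_2\cup\mathcal{W}_2$, so $kQ'$ is likewise excluded. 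This leaves only the following candidate vertices of $\overline{\LL}_k$: points in $k\mathcal{W}\cup(k+1)\mathcal{W}$; points in $k(\mathcal{V}_1\cup\mathcal{W}_1)$; and points in $(k+1)(\mathcal{V}_2\cup\mathcal{W}_2)$.

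The remaining work is to show that a point $kQ$ with $Q\in\mathcal{V}_1\cup\mathcal{W}_1$, if it actually is a vertex of $\overline{\LL}_k$, lies on an edge $k\overline{PQ}$ of $k\CaP$ with $P\in\mathcal{W}$ (and dually for $(k+1)Q'$ with $Q'\in\mathcal{V}_2\cup\mathcal{W}_2$). The point $kQ$ is a vertex of $k\CaP$; I would consider the edges of $k\CaP$ emanating from it. By Lemma \ref{kcota}, scaling up from $k$ to $k+1$ moves $Q$ strictly inside, so on any edge of $k\CaP$ from $kQ$ that runs ``outward'' (toward a vertex $kP$ with $P$ farther from the origin along its ray, i.e. $P\in\mathcal{W}\cup\mathcal{W}_2\cup\mathcal{V}_2$ — more precisely, $P$ such that $kP$ is not swallowed into $\Tint((k+1)\CaP)$), the point $kQ$ stays on the boundary of $\H(k\CaP\cup(k+1)\CaP)$; the only way for $kQ$ to survive as a vertex (rather than being absorbed into the interior by $(k+1)\CaP$) is if it is connected in $k\partial\CaP$ to such a ``far'' vertex, and the first candidate along the face structure is a point of $k\mathcal{W}$, giving the edge $k\overline{PQ}$ with $P\in\mathcal{W}$. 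The dual argument, run on $(k+1)\CaP$ with the roles of ``inward/outward'' reversed, produces the edges $(k+1)\overline{PQ}$ with $P\in\mathcal{W}$, $Q\in\mathcal{V}_1\cup\mathcal{W}_1$ claimed in the statement.

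I expect the main obstacle to be the last step: making rigorous the claim that a surviving vertex $kQ$ of $\overline{\LL}_k$ must be incident (within $k\partial\CaP$) to a vertex of $k\mathcal{W}$, and that the connecting edge is literally an edge $k\overline{PQ}$ of $k\CaP$ rather than a longer boundary path. This requires a careful local analysis at $kQ$: one must show that the supporting hyperplane of $\H(k\CaP\cup(k+1)\CaP)$ at $kQ$ cuts $k\CaP$ in a face containing $kQ$ as a vertex, determine which edges of that face point ``away'' from $(k+1)\CaP$, and verify that such an edge necessarily terminates at a vertex $kP$ with $P\in\mathcal{W}$ using that $\CaP$ is three-dimensional and that $\mathcal{W}\neq\emptyset$ on the relevant face (by the arrangement convention on $\mathcal{T}_\CaP$, each face contains consecutive extremal rays, and the $\mathcal{W}$-type vertices are exactly those on extremal rays with $\tau_P\cap\CaP=\{P\}$). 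The dimension-$3$ hypothesis keeps the combinatorics of faces and edges tractable here; in higher dimensions the analogous statement would need the full face lattice.
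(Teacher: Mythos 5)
There is a genuine gap, and it is in your very first reduction. You claim that every vertex of $\overline{\LL}_k$ is a vertex of the convex polytope $\H(k\CaP\cup(k+1)\CaP)$ and hence lies in $k\mathcal{V}\cup(k+1)\mathcal{V}$. That is false: $\overline{\LL}_k$ is the closure of a \emph{non-convex} shell, and its boundary contains not only part of $\partial\H(k\CaP\cup(k+1)\CaP)$ but also the portions of $\partial(k\CaP)$ and $\partial((k+1)\CaP)$ that face into the shell. The vertices of $\overline{\LL}_k$ therefore include the points where an edge $(k+1)\overline{PQ}$ of $(k+1)\CaP$ (running from $(k+1)P\notin k\CaP$, $P\in\mathcal{W}$, to $(k+1)Q\in\Tint(k\CaP)$, $Q\in\mathcal{V}_1\cup\mathcal{W}_1$) pierces a facet of $k\CaP$, and symmetrically the points $k\overline{PQ'}\cap(k+1)\partial\CaP$. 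These piercing points lie in the \emph{relative interior} of edges of one polytope and on facets of the other; they are vertices of neither $k\CaP$ nor $(k+1)\CaP$ nor of the convex hull. They are exactly the points collected in the sets $\GG_P^k$ that the subsequent theorem identifies as the vertex set of $\overline{\LL}_k$, and the lemma's phrasing ``is contained in an edge'' (rather than ``is a vertex of'') exists precisely to cover them. Your reduction to the finite candidate list $k\mathcal{W}\cup(k+1)\mathcal{W}$, $k(\mathcal{V}_1\cup\mathcal{W}_1)$, $(k+1)(\mathcal{V}_2\cup\mathcal{W}_2)$ misses all of them, so the argument cannot establish the statement (and would contradict the theorem that follows it).

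The paper's proof starts from the weaker, correct observation that every vertex of $\overline{\LL}_k$ merely \emph{lies on an edge} of $k\CaP$ or of $(k+1)\CaP$, and then uses Lemma \ref{kcota} to discard whole edges: if both endpoints $Q_1,Q_2$ of an edge of $\CaP$ lie in $\mathcal{V}_1\cup\mathcal{W}_1$, then $(k+1)\overline{Q_1Q_2}\subset\Tint(k\CaP)$ by convexity, so it carries no vertex of $\overline{\LL}_k$; dually for edges with both endpoints in $\mathcal{V}_2\cup\mathcal{W}_2$. What survives are edges with an endpoint in $\mathcal{W}$. Your use of Lemma \ref{kcota} to eliminate the points $(k+1)Q$ and $kQ'$ is the right instinct, but it must be applied to entire edges, not just to vertices of the two polytopes. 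If you want to salvage your write-up, replace the first step by the paper's ``lies on an edge'' observation and then run your type-analysis on the endpoints of that edge; the delicate local analysis you flag at the end of your proposal then becomes unnecessary for this lemma.
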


\begin{proof}
Since $\LL_k=\H(k\CaP\cup(k+1)\CaP)\setminus (k\CaP\cup(k+1)\CaP)$,
every vertex of $\overline{\LL}_k $ belongs either to an edge of $k\CaP$ or to an edge of $(k+1)\CaP$.

Clearly, if $\overline{Q_1Q_2}$ is an edge of $\CaP$ with $Q_1,Q_2\in\mathcal{V}_1\cup\mathcal{W}_1$ and $k\geq \kappa_0$,
then $(k+1)\overline{Q_1Q_2}\subset \Tint({k\CaP})$
and therefore  it does not
contain vertices of $\overline{\LL}_k $.
We can perform a similar reasoning in case
$Q_1,Q_2\in\mathcal{V}_2\cup \mathcal{W}_2$ to obtain that $\overline{Q_1Q_2}$ does not
contain vertices of $\overline{\LL}_k $ for every $k\geq \kappa_0.$ So, the vertices of $\overline{\LL}_k$ belong to $(k+1)\overline{PQ}$ or $k\overline{PQ'}$ for every $Q\in \mathcal{V}_1\cup\mathcal{W}_1$ and $Q'\in \mathcal{V}_2\cup\mathcal{W}_2.$
\end{proof}

The previous result means that for every integer $k\ge \kappa_0,$ each vertex in $\overline{\LL}_k$ is adjacent to at least one vertex in $k\mathcal{W}\cup (k+1)\mathcal{W},$ and they can be determined. For $P$ any point in $\mathcal{W},$ we define the sets $\VV_{1P}=\{Q\in\mathcal{V}_1\cup \mathcal{W}_1\text{ adjacent to }P\},$ $\VV_{2P}=\{Q'\in\mathcal{V}_2\cup \mathcal{W}_2\text{ adjacent to }P\},$ and
$$\GG_P^k=\{kP,(k+1)P\}\bigcup \cup_{Q\in\VV_{1P}}((k+1)\overline{PQ}\cap k\partial\CaP) \bigcup \cup_{Q'\in\VV_{2P}}(k\overline{PQ'}\cap (k+1)\partial\CaP).$$

\begin{theorem}
The set $\cup_{P\in\mathcal{W}}\GG_P^k$ is the vertex set of $\overline{\LL}_k$ for all integers $k\ge \kappa_0.$
\end{theorem}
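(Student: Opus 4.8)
The plan is to show the two inclusions: $\cup_{P\in\mathcal{W}}\GG_P^k$ is contained in the vertex set of $\overline{\LL}_k$, and conversely every vertex of $\overline{\LL}_k$ lies in $\cup_{P\in\mathcal{W}}\GG_P^k$. The hypothesis $k\ge\kappa_0$ is used throughout so that Lemmas~\ref{kcota} and~\ref{lemak0} apply. For the first inclusion, fix $P\in\mathcal{W}$. The points $kP$ and $(k+1)P$ lie on $\tau_P$, which is an extremal ray of $L_{\R_\ge}(\CaP)$, and since $\tau_P\cap\CaP=\{P\}$ we have $\tau_P\cap(k\CaP\cup(k+1)\CaP)=\{kP,(k+1)P\}$, so both are boundary points of $\H(k\CaP\cup(k+1)\CaP)$ and cannot lie in the interior of $k\CaP$ or $(k+1)\CaP$; hence they belong to $\overline{\LL}_k$, and being on an extremal ray forces them to be vertices. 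For a point $X\in(k+1)\overline{PQ}\cap k\partial\CaP$ with $Q\in\VV_{1P}$, one argues that $X$ is the unique point where the edge segment $(k+1)\overline{PQ}$ — which starts at $(k+1)P\notin k\CaP$ and ends at $(k+1)Q\in\Tint(k\CaP)$ by Lemma~\ref{kcota} — crosses $k\partial\CaP$; $X$ is then a vertex of $\overline{\LL}_k$ because it is the intersection of an edge of $(k+1)\CaP$ with a face of $k\CaP$, and the two polyhedra meet transversally there. The symmetric argument handles $X\in k\overline{PQ'}\cap(k+1)\partial\CaP$ for $Q'\in\VV_{2P}$.

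For the reverse inclusion, let $v$ be a vertex of $\overline{\LL}_k$. By Lemma~\ref{lemak0}, either $v\in k\mathcal{W}\cup(k+1)\mathcal{W}$ — in which case $v\in\GG_P^k$ for the corresponding $P\in\mathcal{W}$ — or $v$ lies on an edge $(k+1)\overline{PQ}$ of $(k+1)\CaP$ with $P\in\mathcal{W}$, $Q\in\mathcal{V}_1\cup\mathcal{W}_1$, or on an edge $k\overline{PQ'}$ of $k\CaP$ with $P\in\mathcal{W}$, $Q'\in\mathcal{V}_2\cup\mathcal{W}_2$. In the second case, $P$ is adjacent to $Q$ in $\CaP$, so $Q\in\VV_{1P}$; I must show $v\in(k+1)\overline{PQ}\cap k\partial\CaP$. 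Since $v$ is a vertex of $\overline{\LL}_k$, it lies in the boundary of $\H(k\CaP\cup(k+1)\CaP)$ but not in the (open) interior of $k\CaP\cup(k+1)\CaP$; as $v$ is on an edge of $(k+1)\CaP$ it is automatically on $\partial(k+1)\CaP$, and the fact that it bounds $\LL_k$ (which separates $k\CaP$ from $(k+1)\CaP$ there) forces $v\in k\partial\CaP$ as well — otherwise $v$ would be either in $\Tint(k\CaP)$, contradicting $v\notin k\CaP\cup(k+1)\CaP$'s interior structure, or strictly outside $k\CaP$, in which case a neighborhood of $v$ along the edge would stay in $\overline{\LL}_k$ and $v$ would not be a vertex. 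Hence $v\in(k+1)\overline{PQ}\cap k\partial\CaP\subset\GG_P^k$. The case $v\in k\overline{PQ'}$ is symmetric, using $kQ'\in\Tint((k+1)\CaP)$.

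The main obstacle is the transversality/genericity argument distinguishing a genuine vertex of $\overline{\LL}_k$ from an interior point of an edge of $\overline{\LL}_k$: one must rule out that $(k+1)\overline{PQ}$ runs along a face of $k\CaP$ for a positive-length stretch, or that $v$ is a point where $(k+1)\overline{PQ}$ merely touches $k\partial\CaP$ tangentially without the surrounding region belonging to $\LL_k$. This is where the hypotheses $P\in\mathcal{W}$ (so $\tau_P$ is extremal and $(k+1)P$ sticks out of $k\CaP$) and $Q\in\mathcal{V}_1\cup\mathcal{W}_1$ with $k\ge\kappa_0$ (so $(k+1)Q$ is interior to $k\CaP$ by Lemma~\ref{kcota}) are essential: they guarantee the edge segment genuinely crosses from outside to inside $k\CaP$, so the crossing point is isolated and is a true vertex. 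I would make this precise by a local convexity argument: near $v$, the set $\H(k\CaP\cup(k+1)\CaP)$ is locally the convex hull of a vertex-edge configuration, and $v$ is extremal in it. Once the vertices are pinned down to lie in the finite set $\cup_{P\in\mathcal{W}}\GG_P^k$, combining with Lemma~\ref{lemak0}'s converse direction closes the argument.
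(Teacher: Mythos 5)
Your proof is correct and follows essentially the same route as the paper, whose own proof is simply the citation ``From Lemmas \ref{kcota} and \ref{lemak0}'': you use Lemma \ref{lemak0} to confine the vertices to the edges emanating from $k\mathcal{W}\cup(k+1)\mathcal{W}$ and Lemma \ref{kcota} to see that each such edge crosses the boundary of the other dilate transversally in a single point, which is exactly the intended argument. Your explicit treatment of the transversality issue (the edge $(k+1)\overline{PQ}$ runs from $(k+1)P\notin k\CaP$ to $(k+1)Q\in\Tint(k\CaP)$, so the crossing point is unique) is a welcome detail that the paper leaves implicit.
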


\begin{proof}
From Lemmas \ref{kcota} and \ref{lemak0}.
\end{proof}

\begin{corollary}\label{traslacion}
Let $P$ be an element in $\mathcal{W}.$ For every $j\in \N,$ $\GG_P^{\kappa_0+j}=\GG_P^{\kappa_0}+jP.$
\end{corollary}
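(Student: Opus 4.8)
The plan is to prove the corollary by examining the definition of $\GG_P^k$ term by term and showing that each piece scales correctly under $k\mapsto k+j$, up to the advertised translation by $jP$. Since $\GG_P^k$ is a finite union of pieces — the two points $kP$ and $(k+1)P$, the sets $(k+1)\overline{PQ}\cap k\partial\CaP$ for $Q\in\VV_{1P}$, and the sets $k\overline{PQ'}\cap(k+1)\partial\CaP$ for $Q'\in\VV_{2P}$ — it suffices to verify the identity on each piece separately, since $\VV_{1P}$ and $\VV_{2P}$ do not depend on $k$.

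First I would handle the two isolated points. Writing $k=\kappa_0+j$, we have $kP=(\kappa_0+j)P=\kappa_0 P+jP$ and $(k+1)P=(\kappa_0+1)P+jP$, so $\{kP,(k+1)P\}=\{\kappa_0P,(\kappa_0+1)P\}+jP$, as required. Next, for a fixed $Q\in\VV_{1P}$, I would show $(k+1)\overline{PQ}\cap k\partial\CaP=\bigl((\kappa_0+1)\overline{PQ}\cap \kappa_0\partial\CaP\bigr)+jP$. The key geometric observation is that, because $P\in\mathcal{W}$, we have $\tau_P\cap\CaP=\{P\}$, so $P$ lies on an extremal ray and $jP\in j\partial\CaP\subset\partial(j\CaP)$ for all $j$; more usefully, the facet(s) of $\CaP$ through $P$ are preserved under dilation, and since $P$ is a common vertex of $\CaP$, $j\CaP$, etc., translating the whole configuration by $jP$ carries $\kappa_0\CaP$ and $(\kappa_0+1)\CaP$ into exactly $k\CaP$ and $(k+1)\CaP$ \emph{along the relevant edges through $P$}. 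Concretely, a point on $(k+1)\overline{PQ}$ has the form $(k+1)P+s\bigl((k+1)Q-(k+1)P\bigr)$ for $s\in[0,1]$; rewriting $(k+1)P=(\kappa_0+1)P+jP$ and using that the edge direction $Q-P$ is unchanged, membership of such a point in $k\partial\CaP$ reduces — after subtracting $jP$ and using that $P$ is a vertex of $\CaP$ so that the supporting hyperplanes of $\CaP$ at $P$ pass through $P$ — to membership of the translated point in $\kappa_0\partial\CaP$. The argument for $Q'\in\VV_{2P}$ and $k\overline{PQ'}\cap(k+1)\partial\CaP$ is symmetric, exchanging the roles of $k$ and $k+1$.

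The main obstacle I anticipate is making the statement ``translation by $jP$ identifies $k\partial\CaP$ with $\kappa_0\partial\CaP$ near $P$'' precise, since globally $k\partial\CaP\neq\kappa_0\partial\CaP+jP$ — the dilations differ away from $P$. The resolution is to exploit that we only intersect with segments \emph{emanating from $P$} (the edges $\overline{PQ}$, $\overline{PQ'}$ of $\CaP$): along the ray from $(k+1)P$ in direction $Q-P$, the first crossing of $k\partial\CaP$ occurs on the facets of $k\CaP$ that do \emph{not} contain $kP=k\cdot$(vertex), i.e. the ``far'' facets, and on those facets the defining linear functional $\ell$ satisfies $\ell(kX)=k\,c$ for the facet constant $c$; so the crossing point $X$ solves a linear equation in which replacing $k$ by $k+j$ and $X$ by $X-jP$ leaves the equation invariant precisely because $\ell(P)$ equals the vertex value, which for the facets through $P$ is $\ell(P)=c$ but for the far facets contributes the shift that exactly compensates. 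I would isolate this as the one computational lemma, phrase it as ``for any edge $\overline{PQ}$ of $\CaP$ with $P\in\mathcal{W}$ and any integers $a\le b$, $b\overline{PQ}\cap a\partial\CaP=\bigl((b-j)\overline{PQ}\cap(a-j)\partial\CaP\bigr)+jP$ whenever $a-j\ge\kappa_0$'', and then the corollary follows by applying it with $(a,b)=(k,k+1)$ and using Lemma \ref{kcota} to guarantee the relevant intersections are the genuine vertices of $\overline{\LL}_k$ rather than spurious ones. With that lemma in hand, reassembling the four pieces gives $\GG_P^{\kappa_0+j}=\GG_P^{\kappa_0}+jP$ directly.
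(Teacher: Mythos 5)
Your overall architecture coincides with the paper's: dispose of the two points $kP,(k+1)P$ by direct computation, observe that $\VV_{1P}$ and $\VV_{2P}$ do not depend on $k$, and reduce everything to the single claim that $(k+1)\overline{PQ}\cap k\partial\CaP$ translates by $P$ when $k$ increases by one. The ``computational lemma'' you isolate at the end is exactly the right statement. The gap is in your justification of it. You assert that the entry point of the segment from $(k+1)P$ towards $(k+1)Q$ into $k\CaP$ lies on a facet of $k\CaP$ that does \emph{not} contain $kP$, and then invoke a ``shift that exactly compensates'' on those far facets. This is backwards, and taken literally it would destroy the conclusion: if the crossing point $X$ satisfied $\ell(X)=kc$ for a facet with $\ell(P)\neq c$, then $\ell(X-jP)=kc-j\ell(P)\neq (k-j)c$, so $X-jP$ would not lie on the corresponding facet of $(k-j)\CaP$ and the translation identity would fail. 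The identity holds precisely because the crossing lands on a facet \emph{through} $P$, where $\ell(P)=c$; and that is the geometric fact you still have to prove. It is true for $k$ large: writing $k\CaP=\{x:\ell_i(x)\le kc_i\}$, a far facet has $\ell_i(P)<c_i$, hence $\ell_i((k+1)P)=(k+1)\ell_i(P)<kc_i$ eventually, so $(k+1)P$ violates only inequalities of facets through $kP$, and by convexity the unique entry point of the segment into $k\CaP$ is where the last such violated inequality becomes tight. One must then also check that $X-jP$ still satisfies the far-facet inequalities of $(k-j)\CaP$ and lies on the shorter segment $(k+1-j)\overline{PQ}$, which again uses $\ell_i(P)<c_i$ and $k-j\ge\kappa_0$. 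Your proposal contains none of this; the one sentence that should carry the proof is incoherent as written.

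For comparison, the paper sidesteps the facet bookkeeping by slicing with the plane $\pi$ through $O$, $P$ and $Q$: in the polygon $\CaP'=\CaP\cap\pi$ the crossing point is the intersection of $(k+1)\overline{PQ}$ with the other edge $k\overline{PQ'}$ of $k\CaP'$ at the vertex $kP$, and writing $Q'=\alpha P+\beta Q$ a short linear computation shows this intersection has the form $kP+C$ with $C$ independent of $k$, whence consecutive crossing points differ by $P$. Either route can be made to work, but yours needs (i) the claim about which facet is crossed to be corrected, and (ii) an actual proof that it is a facet through $P$ (together with the residual far-facet verification); without these the key lemma is unproved.
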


\begin{proof}
By definition, the intersection $\tau_P\cap \GG_P^{\kappa_0+j}$ is $\{(\kappa_0+j)P,(\kappa_0+j+1)P\},$ that is, it is equal to $(\tau_P\cap \GG_P^{\kappa_0})+jP.$

Assume $Q\in\VV_{1P},$ and let $\pi$ be the plane containing $\{O,P,Q\},$ let $\CaP'$ be the convex polygon $\CaP\cap \pi,$ and let $Q'\in\CaP'\setminus\{Q\}$ be the other vertex of $\CaP'$ adjacent to $P.$
After some basic computations the reader can check that for every real number $m\gg 0,$ $((m+2)\overline{PQ}\cap (m+1)\overline{PQ'})-((m+1)\overline{PQ}\cap m\overline{PQ'})=P.$
Hence, $P'\in\cup_{Q\in\VV_{1P}}((\kappa_0+j+1)\overline{PQ}\cap (\kappa_0+j)\partial\CaP)$ iff $P'\in jP+\cup_{Q\in\VV_{1P}}((\kappa_0+1)\overline{PQ}\cap \kappa_0\partial\CaP).$
Analogously, a similar result can be proved for any $Q\in \VV_{2P}.$
\end{proof}

So far, we have described the vertex set of $\overline{\LL}_k$ for any integer $k\ge \kappa_0.$ Now we are going to construct a decomposition of $\overline{\LL}_k.$ First of all, we need to rearrange the points in $\GG_{P_i}^{k}\setminus\tau_i$ for all $P_i\in \mathcal{W}$ (recall that we have denoted by $P_i$ the point in $(\mathcal{W}\cup \mathcal{W}_1)\cap \tau_i$ where $i\in [t]$). Let $\{Q_1,\dots,Q_m\}\subset k\CaP\cap (k+1)\CaP$ be the set $\GG_{P_i}^{k}\setminus\tau_i.$
Thus, for every $j\in[m]$, the segments $\overline{(kP)Q_j}$ and $\overline{((k+1)P)Q_j}$ are edges of $\overline{\LL}_k.$ We can rearrange these elements so that for every $j\in[m-1]$, the elements $Q_j$ and $Q_{j+1}$ are in the same face of $k\CaP$ and in the same face of $(k+1)\CaP.$
In this way, we obtain that the segments $\overline{Q_1Q_2},\dots,\overline{Q_{m-1}Q_m}$ are also edges of
$\overline {\LL}_k $ and the triangles $(kP)Q_jQ_{j+1}$ and $((k+1)P)Q_jQ_{j+1}$ are faces of $\overline {\LL}_k.$ In the following, we assume that the elements in $\GG_{P_i}^{k}\setminus\tau_i$ are arranged in this way.

The last vertex $Q_m\in \GG_{P_i}^{k}\setminus\tau_i$ satisfies there exists two points $A,B$ in $k\CaP\cap \tau_{\sigma(i)}$ and $(k+1)\CaP\cap \tau_{\sigma(i)}$ respectively, or in $k\CaP\cap\tau_{\sigma(i-2)}$ and $(k+1)\CaP\cap \tau_{\sigma(i-2)}$ such that $\overline{Q_mA}\subset k\partial \CaP$ and $\overline{Q_mB}\subset (k+1)\partial \CaP.$
For the vertex $Q_1$ we have a similar situation.
In the sequel, we assume that $Q_m$ and $\tau_{\sigma(i)}$ satisfy the above property as well as the pair $Q_1$, $\tau_{\sigma(i-2)}$.
If $P_{\sigma(i)}\in\mathcal{W},$ let $\{Q'_1,\dots,Q'_{m'}\}= \GG^k_{P_{\sigma(i)}}\setminus \tau_{\sigma(i)}.$ Since the segment $k\overline{P_iP_{\sigma(i)}}$ is an edge of $\overline{\LL}_k,$ $Q_m$ and $Q'_1$ belong to a face of $k\CaP$ containing $kP_{\sigma(i)}$ and $kP_i$ respectively. This situation is illustrated in Figure \ref{ejemplo_cortes}.

\begin{definition}\label{rearrange}
Let $P_i\in\mathcal{W}.$ Denote by $T^{k}_{ij}$ the tetrahedron with vertices $kP_i$, $(k+1)P_i$, $Q_{j}$, and $Q_{j+1}$ with $j\in[m-1],$ and by $G_i^{k}$ the set $\cup_{j\in[m-1]} T^{k}_{ij}.$ If $P_i\in \mathcal{W}_1$ or $m=1,$ we fix $G_i^{k}=\emptyset.$
If $P_{i}\in\mathcal{W}_1$ or $P_{\sigma(i)}\in\mathcal{W}_1$, consider $\widetilde{G}_{i\sigma(i)}^{k}=\emptyset,$ otherwise, we define $\widetilde{G}_{i\sigma(i)}^{k}$ as the convex hull of the triangles $(kP_i)((k+1)P_i)Q_{m}$ and   $(kP_{\sigma(i)})((k+1)P_{\sigma(i)})Q'_1.$
\end{definition}

\begin{remark}\label{rearrange1}
Note that the set $G^{k}_i$ is the set determined by $\GG^{k}_{P_i}$ when $\GG^{k}_{P_i}$ is not a 2-dimensional object. So, for all $q\in \N,$ $G_i^{\kappa_0+q}=G_i^{\kappa_0}+qP_i$ and $T^{\kappa_0+q}_{ij}=T^{\kappa_0}_{ij}+qP_i$ (Corollary \ref{traslacion}).
If $\widetilde{G}_{i\sigma(i)}^{k}\neq \emptyset$, and since the edge $\overline{Q_{m}Q'_{1}}$ is the intersection of a face that contains $k\overline{P_iP_{\sigma(i)}}$ with another containing $(k+1)\overline{P_iP_{\sigma(i)}},$ $\overline{Q_{m}Q'_{1}}$ is also parallel to $k\overline{P_iP_{\sigma(i)}}$ and $(k+1)\overline{P_iP_{\sigma(i)}}.$ Moreover, $\widetilde{G}_{i\sigma(i)}^{\kappa_0+j}$ is the convex hull of the triangles $(\kappa_0P_i)((\kappa_0+1)P_i)Q_{m}+jP_i$ and $(\kappa_0P_{\sigma(i)})((\kappa_0+1)P_{\sigma(i)})Q'_1+jP_{\sigma(i)}$ for all $j\in \N$ (Corollary \ref{traslacion} again).
\end{remark}

The above sets provide us a decomposition of the sets $\overline{\LL}_k$:
\begin{equation}\label{igualdad}
\overline{\LL}_k=\bigcup_{i\in[t]} (G_i^{k} \cup \widetilde{G}_{i\sigma(i)}^{k}),
\textrm{ for every integer } k\ge \kappa_0.
\end{equation}
Summarizing, the set $\LL$ is equal to
$$\left(\H(\{O,\kappa_0P_1,\ldots ,\kappa_0P_t\})\setminus\cup_{k=0}^{\kappa_0-1}k\CaP\right)\cup \bigcup_{k\ge \kappa_0}\bigcup_{i\in[t]} \Tint(G_i^{k} \cup \widetilde{G}_{i\sigma(i)}^{k}).$$

Since $\overline{\LL}_{\kappa_0+j}$ can be obtained from $\overline{\LL}_{\kappa_0}$ by adding $jP_i$ to $G_i^{\kappa_0},$ and $jP_i$ or $jP_{\sigma(i)}$ to the vertices of $\widetilde{G}_{i\sigma(i)}^{\kappa_0},$ $\overline{\LL}_{\kappa_0+j}$ can be determined from a finite number of subsets.
Note that all the elements necessary to delimit $\LL$ can be  easily computed by using basic geometric tools.
In Figure \ref{ejemplo_cortes}, you can see an example of the sets $\LL_k,$ and the vertex sets of $\widetilde{G}_{i\sigma(i)}^{k}$ and $G_i^{k}.$
\begin{figure}[h]
\begin{center}
\begin{tabular}{|c|}\hline
\includegraphics[scale=.25]{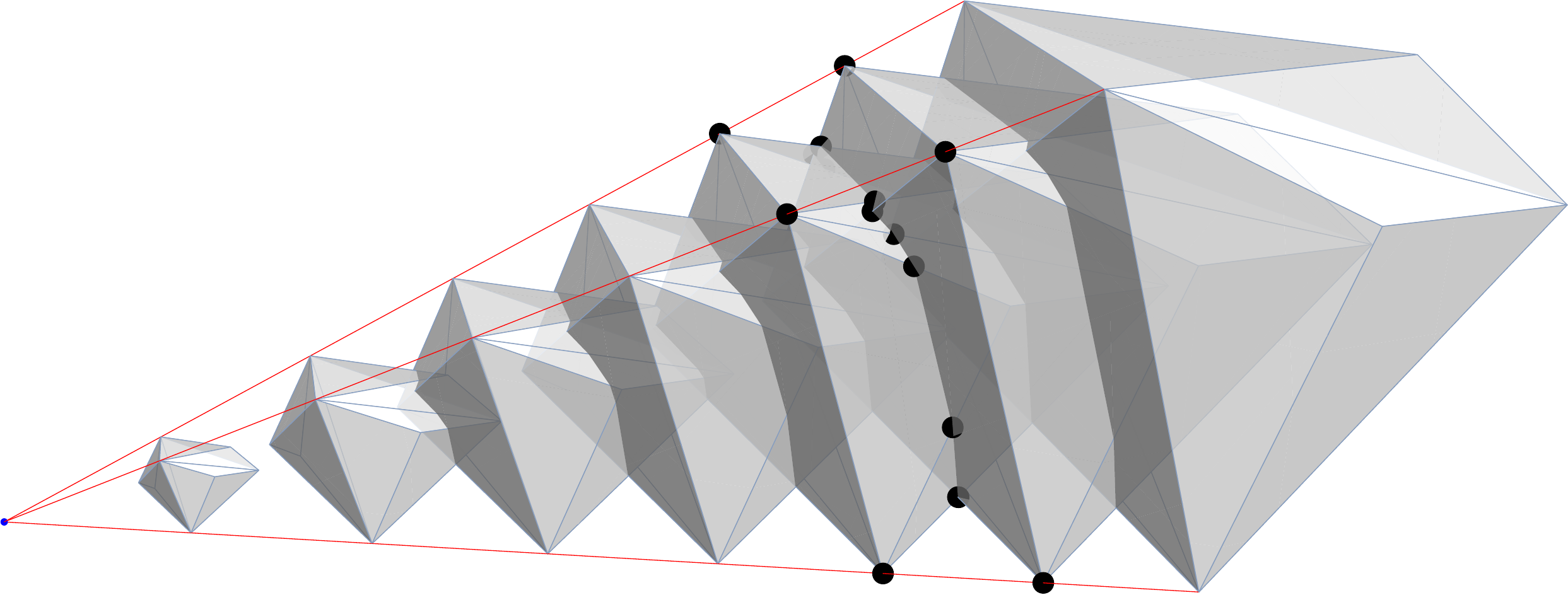}\\
\hline
\end{tabular}
\caption{Vertex set of $\LL_5.$}\label{ejemplo_cortes}
\end{center}
\end{figure}

The last results in this section are two properties of the sets $G_i^{k}$ and $\widetilde{G}_{i\sigma(i)}^{k}$ that are used in the following sections.

\begin{lemma}\label{deltaP}
Let $P$ be a point in $\mathcal{W}\cap\Q^3_\geq$.
For every integer $k\geq \kappa_0$ there exist $r,q\in\N$ with $0\leq r<h_P$  such that
$G_i^{k}\cap \N^3=(G_i^{\kappa_0+r}\cap \N^3)+qh_P P$.
\end{lemma}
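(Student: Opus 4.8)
The plan is to exploit the translation identity $G_i^{\kappa_0+j}=G_i^{\kappa_0}+jP$ from Remark \ref{rearrange} (which relies on Corollary \ref{traslacion}), together with the fact that $P$ is rational with denominator $h_P$, i.e. $h_PP\in\N^3$. First I would write $k=\kappa_0+j$ for some $j\in\N$, so that $G_i^k=G_i^{\kappa_0}+jP$. Performing Euclidean division of $j$ by $h_P$, write $j=qh_P+r$ with $0\le r<h_P$ and $q\in\N$. Then $G_i^k=G_i^{\kappa_0}+rP+qh_PP=G_i^{\kappa_0+r}+qh_PP$, again by the translation identity. This already gives the set-level equality $G_i^k=G_i^{\kappa_0+r}+qh_PP$; the remaining point is to pass to lattice points.

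The key step is to check that translating by the \emph{integer} vector $qh_PP$ is compatible with intersecting with $\N^3$, namely that
$$\bigl(G_i^{\kappa_0+r}+qh_PP\bigr)\cap\N^3=\bigl(G_i^{\kappa_0+r}\cap\N^3\bigr)+qh_PP.$$
This is immediate: since $v:=qh_PP\in\Z^3$, the map $x\mapsto x+v$ is a bijection of $\R^3$ carrying $\N^3$ \emph{into} $\N^3$, and for a point $x$ with $x+v\in\N^3$ we have $x=(x+v)-v\in\Z^3$; combined with $x\in G_i^{\kappa_0+r}\subset\R^3_{\ge}$ one needs that the coordinates of $x$ are nonnegative, which holds because $G_i^{\kappa_0+r}\subset L_{\R_\ge}(\CaP)\subset\R^3_{\ge}$, so $x\in\N^3$. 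Hence the intersection commutes with the translation, and we obtain $G_i^k\cap\N^3=(G_i^{\kappa_0+r}\cap\N^3)+qh_PP$, as desired. The degenerate case $P\in\mathcal{W}_1$, where $G_i^k=\emptyset$ by convention, is trivial since then both sides are empty for every choice of $r,q$.

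I do not anticipate a serious obstacle here: the statement is essentially a bookkeeping consequence of Corollary \ref{traslacion} plus the rationality of $P$. The only mild subtlety is making sure the division $j=qh_PP$ is set up so that $0\le r<h_P$ with $r$ genuinely ranging over residues and $q\ge 0$ (which is automatic since $j\ge 0$), and that $\kappa_0+r$ is again an index $\ge\kappa_0$ to which Remark \ref{rearrange} applies — both of which are clear. If one wants to be careful, one also notes that $h_PP$ being the minimal such multiple is not actually needed for the argument; any integer multiple of $P$ would do, but using $h_PP$ gives the sharp bound $0\le r<h_P$ stated in the lemma.
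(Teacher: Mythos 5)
Your proof is correct and follows essentially the same route as the paper's one-line proof: Euclidean division by $h_P$ combined with the translation identity $G_i^{\kappa_0+j}=G_i^{\kappa_0}+jP$ from Corollary \ref{traslacion}, plus the (easy but worth stating) observation that translating by the integer vector $qh_PP$ commutes with intersecting with $\N^3$. If anything your bookkeeping is the more careful one: you divide $j=k-\kappa_0$ rather than $k$ itself, which is what the translation identity actually requires.
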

\begin{proof}
Take  $q$ and $r$ in $\N$ such that $k-\kappa_0=q h_P + r$ with $0\leq r< h_P$.
\end{proof}

\begin{lemma}\label{prisma}
If $P\in \widetilde{G}_{i\sigma(i)}^{k}$ and $k\in\N$ is greater than or equal to  $\kappa_0,$ then $P+P_i$ and $P+P_{\sigma(i)}$ belong to $\widetilde{G}_{i\sigma(i)}^{k+1}.$
\end{lemma}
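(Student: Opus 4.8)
The plan is to use the explicit description of $\widetilde{G}_{i\sigma(i)}^{k}$ from Remark \ref{rearrange} as the convex hull of two triangles, together with the translation behaviour already established in Corollary \ref{traslacion}. Recall that $\widetilde{G}_{i\sigma(i)}^{k}$ is the convex hull of the triangle $\Delta_1^k$ with vertices $kP_i$, $(k+1)P_i$, $Q_m$ and the triangle $\Delta_2^k$ with vertices $kP_{\sigma(i)}$, $(k+1)P_{\sigma(i)}$, $Q'_1$, where $Q_m\in\GG_{P_i}^k\setminus\tau_i$ and $Q'_1\in\GG_{P_{\sigma(i)}}^k\setminus\tau_{\sigma(i)}$, and that the edge $\overline{Q_mQ'_1}$ is parallel to both $k\overline{P_iP_{\sigma(i)}}$ and $(k+1)\overline{P_iP_{\sigma(i)}}$. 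By Remark \ref{rearrange}, the vertex set of $\widetilde{G}_{i\sigma(i)}^{k+1}$ is obtained from that of $\widetilde{G}_{i\sigma(i)}^{k}$ by translating the three vertices of $\Delta_1^k$ by $P_i$ and the three vertices of $\Delta_2^k$ by $P_{\sigma(i)}$; in particular $\Delta_1^{k+1}=\Delta_1^k+P_i$ and $\Delta_2^{k+1}=\Delta_2^k+P_{\sigma(i)}$.

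First I would write an arbitrary $P\in\widetilde{G}_{i\sigma(i)}^{k}$ as a convex combination of the six vertices, say $P=\lambda\,u+\mu\,v$ where $u\in\Delta_1^k$, $v\in\Delta_2^k$, $\lambda,\mu\ge 0$, $\lambda+\mu=1$ (this uses that the convex hull of the union of two convex sets consists exactly of the segments joining a point of one to a point of the other). Then I would compute
$$
P+P_i=\lambda(u+P_i)+\mu(v+P_i).
$$
The term $u+P_i$ lies in $\Delta_1^k+P_i=\Delta_1^{k+1}\subset\widetilde{G}_{i\sigma(i)}^{k+1}$. The obstacle is the term $v+P_i$: $v$ lives in the triangle $\Delta_2^k$ which translates by $P_{\sigma(i)}$, not by $P_i$, so a priori $v+P_i$ need not lie in $\widetilde{G}_{i\sigma(i)}^{k+1}$. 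The key point to establish is that $v+P_i$ nevertheless lies in $\widetilde{G}_{i\sigma(i)}^{k+1}$, and symmetrically that $u+P_{\sigma(i)}\in\widetilde{G}_{i\sigma(i)}^{k+1}$; granting this, $P+P_i=\lambda(u+P_i)+\mu(v+P_i)$ is a convex combination of two points of the (convex) set $\widetilde{G}_{i\sigma(i)}^{k+1}$, hence lies in it, and the same argument with the roles of $P_i,P_{\sigma(i)}$ swapped handles $P+P_{\sigma(i)}$.

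To prove that $v+P_i\in\widetilde{G}_{i\sigma(i)}^{k+1}$, I would work in the plane region geometry: write $v\in\Delta_2^k$ as a convex combination $v=\alpha\,kP_{\sigma(i)}+\beta\,(k+1)P_{\sigma(i)}+\gamma\,Q'_1$. Then $v+P_i$ is a convex combination of $kP_{\sigma(i)}+P_i$, $(k+1)P_{\sigma(i)}+P_i$ and $Q'_1+P_i$; the first two points lie on the segment $\overline{(kP_{\sigma(i)}+P_i)\,((k+1)P_{\sigma(i)}+P_i)}$, which is a translate of $k\overline{P_iP_{\sigma(i)}}$ by $P_i+P_{\sigma(i)}$ and is therefore contained in $(k+1)\overline{P_iP_{\sigma(i)}}$ (using $(k+1)\overline{P_iP_{\sigma(i)}}=k\overline{P_iP_{\sigma(i)}}+\overline{P_iP_{\sigma(i)}}$ for a segment through the origin's direction — more precisely, $aP_i+bP_{\sigma(i)}$ with $a+b=k+1$, $a,b\ge 1$ is a point of $(k+1)\overline{P_iP_{\sigma(i)}}$), hence these points lie in $\Delta_1^{k+1}\cup\Delta_2^{k+1}\subset\widetilde{G}_{i\sigma(i)}^{k+1}$. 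For the third vertex, $Q'_1+P_i$: since $\overline{Q_mQ'_1}$ is parallel to $\overline{P_iP_{\sigma(i)}}$ and $Q'_1+P_{\sigma(i)}$ is the corresponding vertex $Q'^{\,k+1}_1$ of $\widetilde{G}_{i\sigma(i)}^{k+1}$, the point $Q'_1+P_i=(Q'_1+P_{\sigma(i)})+(P_i-P_{\sigma(i)})$ is obtained from $Q'^{\,k+1}_1$ by moving along the direction $\overline{Q_mQ'_1}$ toward $Q^{\,k+1}_m$; a short computation with the parallelism relation, analogous to the one carried out in the proof of Corollary \ref{traslacion} (the identity $((m+2)\overline{PQ}\cap(m+1)\overline{PQ'})-((m+1)\overline{PQ}\cap m\overline{PQ'})=P$), shows it lands on the edge $\overline{Q^{\,k+1}_mQ'^{\,k+1}_1}$, hence in $\widetilde{G}_{i\sigma(i)}^{k+1}$. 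Assembling these three facts, $v+P_i$ is a convex combination of three points of $\widetilde{G}_{i\sigma(i)}^{k+1}$ and convexity finishes it. The main work, and the only genuinely delicate step, is this verification that the translated vertex $Q'_1+P_i$ falls on the edge $\overline{Q^{\,k+1}_mQ'^{\,k+1}_1}$; everything else is convexity bookkeeping.
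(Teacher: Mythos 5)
Your argument is correct and, like the paper's, is an elementary convexity computation resting on exactly the two structural facts recorded in Remark \ref{rearrange}: the two triangles translate by $P_i$ and by $P_{\sigma(i)}$ respectively, and $\overline{Q_mQ'_1}$ is parallel to $\overline{P_iP_{\sigma(i)}}$. The bookkeeping is different, though. The paper sweeps all of $\widetilde{G}_{i\sigma(i)}^{k}$ by translates of the single triangle $(kP_i)((k+1)P_i)Q_m$ in the direction $\overrightarrow{P_iP_{\sigma(i)}}$: it writes $P=Q+\lambda\overrightarrow{P_iP_{\sigma(i)}}$ with $Q$ in that triangle and $\lambda\ge 0$, so that $P+P_i=(Q+P_i)+\lambda\overrightarrow{P_iP_{\sigma(i)}}$ with $Q+P_i$ in the translated triangle, and both claims follow in two lines (at the cost of leaving implicit that the level-$(k+1)$ prism extends at least as far in that direction from each point of its first triangle as the level-$k$ prism does). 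You instead expand $P$ over all six vertices and check that each vertex translated by $P_i$ lands in $\widetilde{G}_{i\sigma(i)}^{k+1}$; this is longer but makes the one genuinely geometric input fully explicit, namely that $Q'_1+P_i$ lies on the edge $\overline{(Q_m+P_i)(Q'_1+P_{\sigma(i)})}$ because $Q'_1=Q_m+\mu(P_{\sigma(i)}-P_i)$ for some $\mu\ge 0$. One local imprecision to fix: the points $kP_{\sigma(i)}+P_i$ and $(k+1)P_{\sigma(i)}+P_i$ do not in general lie in $\Delta_1^{k+1}\cup\Delta_2^{k+1}$, nor is their joining segment a translate of $k\overline{P_iP_{\sigma(i)}}$ by $P_i+P_{\sigma(i)}$ contained in $(k+1)\overline{P_iP_{\sigma(i)}}$ (the second point has coefficient sum $k+2$, not $k+1$). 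What is true, and is all you need, is that they lie on the segments $(k+1)\overline{P_iP_{\sigma(i)}}$ and $(k+2)\overline{P_iP_{\sigma(i)}}$, each of which joins a vertex of $\Delta_1^{k+1}$ to the corresponding vertex of $\Delta_2^{k+1}$ and is therefore contained in the convex set $\widetilde{G}_{i\sigma(i)}^{k+1}$; with that correction your convex-combination argument closes without further change.
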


\begin{proof}
Recall that $\overline{Q_{m}Q'_{1}},$ $k\overline{P_iP_{\sigma(i)}}$ and $(k+1)\overline{P_iP_{\sigma(i)}}$ are parallel, and $\widetilde{G}_{i\sigma(i)}^{k+j}$ is the convex hull of the triangles $(kP_i)((k+1)P_i)Q_{m}+jP_i$ and   $(kP_{\sigma(i)})((k+1)P_{\sigma(i)})Q'_1+jP_{\sigma(i)}.$ If $P\in \widetilde{G}_{i\sigma(i)}^{k},$ there exists $Q$ in the triangle $(kP_i)((k+1)P_i)Q_m$ and $\lambda \in \R_\ge $ such that $P=Q+\lambda \overrightarrow{P_{i}P_{\sigma(i)}}.$ Thus, $P+P_i=(Q+P_i)+\lambda \overrightarrow{P_{i}P_{\sigma(i)}}$ where $Q+P_i$ belongs to the triangle $(kP_i)((k+1)P_i)Q_{m}+P_i.$ Hence, $P+P_i\in \widetilde{G}_{i\sigma(i)}^{k+1}.$ Analogously, it can be proved that $P+P_{\sigma(i)}$ belongs to $\widetilde{G}_{i\sigma(i)}^{k+1}.$
\end{proof}

\section{Cohen--Macaulay affine simplicial convex polyhedron semigroups}\label{sectionCM}

In this section, the Cohen--Macaulay property is studied.
A semigroup is said to be normal if $ms\in S$ for some $m\in \N$ implies $s\in S$. Hochster's theorem  proves that  normal semigroups are Cohen--Macaulay (see \cite{Hochster}).
In general, polyhedron semigroups are non-normal: take for instance
the polyhedron semigroup associated to the convex hull of
$\{(6,0,0),(0,6,0),(0,0,6),(2.2,2.2,2.2)\}$, the element
$(2,2,2)$ is in the semigroup, but $(1,1,1)$ is not.

Recall that we denote by $\mathcal{T}_A$
the set of the extremal rays of the set $L_{\R_{\geq}}(A)$, by $\FraC_A$
the set $L_{\R_{\geq}}(A)\cap \N^n$, and by $\int(A)$
the set $ A\setminus \mathcal{T}_A,$ where $A$ is a subset of $\R^n.$

The following results prove some characterizations of Cohen--Macaulay rings in terms of
the corresponding associated semigroup. Assume that $S\subset \N^n$ is a finitely generated semigroup
with $\{g_1,\ldots ,g_t\}$ its minimal generating set and that $S$ is a simplicial
semigroup, that is, $L_{\R_\ge}(S)$ is generated by $n$ linearly independent generators
of $S$. We assume that $L_{\R_\ge}(S)$ is generated by $\{g_1,\ldots ,g_n\}$ with
$n\leq t$. Note that the elements of $\{g_1,\ldots ,g_n\}$ belong to
different rays of $L_{\R_\ge}(S)$ and so $\{g_1,\ldots ,g_n\}$ is also a basis of
$\R^n$.

\begin{theorem}\label{C-M}
Let $S$ be a simplicial semigroup.
Then the following conditions are equivalent:
\begin{enumerate}[(1)]
\item $S$  is Cohen--Macaulay.
\item For any $\alpha,\beta\in S$, if there exist $1\le i\neq j\le n$ such that $\alpha+g_i=\beta+g_j$, then $\alpha-g_j=\beta-g_i\in S.$
\item For all $a\in\FraC_S\setminus S,$ and every two different integers $i,j\in [n]$, it is fulfilled that  $\{a+g_i,a+g_j\}\not \subset S$.
\end{enumerate}
\end{theorem}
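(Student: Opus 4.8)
The plan is to prove the cycle of implications $(1)\Rightarrow(3)\Rightarrow(2)\Rightarrow(1)$, using the standard combinatorial description of the Cohen--Macaulay property for simplicial affine semigroups. The key classical fact I would invoke is that a simplicial semigroup $S$ (with $L_{\R_\ge}(S)$ generated by the basis $g_1,\dots,g_n$) is Cohen--Macaulay if and only if the $\k[y_{g_1},\dots,y_{g_n}]$-module $\k[S]$ is free, equivalently if and only if whenever $s,s'\in S$ satisfy $s-s'\in\Z g_1+\cdots+\Z g_n$ and $s-s'$ lies in the subgroup generated by the $g_i$, the ``parallelepiped'' structure of $S$ is preserved; concretely, Cohen--Macaulayness is equivalent to the statement that for all $s,s'\in S$, if $s-s'=\sum_i \lambda_i g_i$ with $\lambda_i\in\Z$, then $s-s'$ can be reached inside $S$ without leaving $S$ when we decrease coordinates. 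This is exactly condition $(2)$ phrased one equality at a time, so $(1)\Leftrightarrow(2)$ is essentially a reformulation via the Apéry-type / Proposition of Trung--Hoa (cited as \cite{TrungHoa}) or \cite{RosalesCM}; I would cite the precise statement and then show $(2)$ is the ``elementary move'' version of it.

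For $(1)\Rightarrow(3)$: suppose $S$ is Cohen--Macaulay and, for contradiction, that there is $a\in\FraC_S\setminus S$ and $i\neq j$ with $a+g_i\in S$ and $a+g_j\in S$. Set $\alpha=a+g_i-g_j$ — wait, that need not be in $S$; instead set $\alpha:=a+g_i$, $\beta:=a+g_j$, so $\alpha,\beta\in S$ and $\alpha+g_j=\beta+g_i$. By $(2)$ (which follows from $(1)$) we get $\alpha-g_i=\beta-g_j\in S$; but $\alpha-g_i=a$, contradicting $a\notin S$. Hence $(1)\Rightarrow(3)$, and in fact this shows $(2)\Rightarrow(3)$ directly.

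For $(3)\Rightarrow(2)$: given $\alpha,\beta\in S$ and $i\neq j$ with $\alpha+g_i=\beta+g_j=:\gamma$, put $a:=\alpha-g_j=\beta-g_i\in\R^n$. Since $\{g_1,\dots,g_n\}$ is a basis this $a$ is well defined, and $a=\gamma-g_i-g_j$. I would check $a\in\FraC_S$: indeed $a+g_i=\alpha\in S\subset L_{\R_\ge}(S)$ and $a+g_j=\beta\in L_{\R_\ge}(S)$, and from these two one extracts that all coordinates of $a$ in the $g$-basis are nonnegative (the coordinate along $g_i$ is the $g_i$-coordinate of $\beta$, which is $\ge 0$, similarly along $g_j$, and along any other $g_\ell$ it equals the common $g_\ell$-coordinate of $\alpha$ and $\beta$, which is $\ge 0$); since $\alpha,\beta\in\N^n$ and $g_i,g_j\in\N^n$ one also gets $a\in\Z^n$, hence $a\in\FraC_S$. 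Now if $a\notin S$, then $\{a+g_i,a+g_j\}=\{\alpha,\beta\}\subset S$ contradicts $(3)$; therefore $a\in S$, which is exactly $\alpha-g_j=\beta-g_i\in S$, proving $(2)$.

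The main obstacle, and the only place requiring genuine care rather than bookkeeping, is the equivalence $(1)\Leftrightarrow(2)$: one must pin down the exact combinatorial criterion for Cohen--Macaulayness of a simplicial semigroup ring and verify that the ``one generator at a time'' exchange condition in $(2)$ is equivalent to the full freeness of $\k[S]$ over $\k[y_{g_1},\dots,y_{g_n}]$. The forward direction $(1)\Rightarrow(2)$ is immediate from freeness (the relation $\alpha+g_i=\beta+g_j$ forces $\alpha$ and $\beta$ to lie in the same coset modulo $\Z g_1+\cdots+\Z g_n$, and freeness of the module means the unique coset representative of minimal coordinates is in $S$ and is reached from both by subtracting monomials). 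The converse $(2)\Rightarrow(1)$ is the subtle part: one shows that iterating the elementary exchange in $(2)$ lets one move, within $S$, from any element of a coset down to the coordinatewise-minimal representative, which yields that each coset of $S/(\Z g_1+\cdots+\Z g_n)$ meeting $S$ has a unique minimal element — equivalently $\k[S]$ is $\k[y_{g_1},\dots,y_{g_n}]$-free — which by the simplicial hypothesis and Hochster/Trung--Hoa is Cohen--Macaulayness. I would present this via the Apéry set $\ap(S,\{g_1,\dots,g_n\})$ and the standard argument that $(2)$ makes each Apéry class a single element, invoking \cite{RosalesCM} (or \cite{TrungHoa}) for the translation to the ring-theoretic statement.
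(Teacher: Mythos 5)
Your proposal is correct and follows essentially the same route as the paper: the equivalence $(1)\Leftrightarrow(2)$ is delegated to a known characterization of Cohen--Macaulayness for simplicial semigroups (the paper cites \cite[Theorem 2.2]{GSW} rather than \cite{TrungHoa} or \cite{RosalesCM}, but the content is the same), $(2)\Rightarrow(3)$ is the same contradiction with $\alpha=a+g_i$, $\beta=a+g_j$, and $(3)\Rightarrow(2)$ is the same argument expressing $\alpha,\beta$ in the basis $\{g_1,\ldots,g_n\}$ to see that $a=\alpha-g_j=\beta-g_i\in\FraC_S$. (Only a cosmetic slip: the $g_i$-coordinate of $a$ is that of $\alpha$, not of $\beta$; the nonnegativity conclusion is unaffected.)
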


\begin{proof}
By \cite[Theorem 2.2]{GSW},
(1) is equivalent to (2).

We prove now that (2) implies (3).
Suppose that there exists $a\in \FraC_S\setminus S$ and $i,j\in [n]$ such that
$\alpha=a+g_i\in S$ and $\beta=a+g_j\in S$. Clearly $\alpha+g_j=\beta+g_i$, but
$\alpha-g_i=\beta-g_j=a\not\in S$.

Lastly, we prove (3) implies (2). Let $\alpha,\beta$ be two elements of $S$ satisfying
$\alpha+g_1=\beta+g_2$.
Since $\alpha,\beta\in L_{\R_\geq}(S)=L_{\R_\geq}(\{g_1,\ldots ,g_n\})$ and
$\{g_1,\ldots ,g_n\}$ is a basis of $\R^n$, there exist
$\lambda_1,\ldots,\lambda_n,\mu_1,\ldots,\mu_n\in \R_{\geq}$
such that $\alpha=\lambda_1g_1+\lambda_2g_2+\cdots +\lambda_ng_n$ and
$\beta=\mu_1g_1+\mu_2g_2+\cdots +\mu_ng_n$. Furthermore, these expressions are unique.
Since $\beta=\alpha+g_1-g_2=(\lambda_1+1)g_1+(\lambda_2-1)g_2+\cdots +\lambda_ng_n$,
we obtain that
$\lambda_2-1=\mu_2\geq 0$, and hence $\lambda_2\geq 1$. Analogously, we obtain
that $\mu_1\geq 1$. Thus $a=\alpha-g_2=\beta-g_1\in L_{\R_\geq}(\{g_1,\ldots ,g_n\})$.
By the hypothesis and
since $a+g_1=\beta\in S$ and $a+g_2=\alpha\in S$, the element $a=\alpha-g_2=\beta-g_1$
belongs to $S$.
\end{proof}

\begin{corollary}\label{lemma_no_C-M}
Let $S\subset \N^n$ be a simplicial affine semigroup such that
$\int (\FraC_S) \setminus S$ is a finite set with $\FraC_S \neq S$. Then $S$ is not Cohen--Macaulay.
\end{corollary}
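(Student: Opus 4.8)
The plan is to exhibit, for such a semigroup $S$, an explicit element $a\in\FraC_S\setminus S$ together with two indices $i\neq j$ for which $a+g_i$ and $a+g_j$ both lie in $S$; by Theorem~\ref{C-M}(3) this forbids Cohen--Macaulayness. The two hypotheses that must do the work are: (a) $S$ is nonproper, meaning $\FraC_S\setminus S\neq\emptyset$, so there is at least one gap; and (b) $\int(\FraC_S)\setminus S$ is finite, so almost all lattice points in the relative interior of the cone lie in $S$. The finiteness in (b) is the crucial leverage, because it says that if we push a gap far enough into the interior of the cone it eventually lands in $S$.

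First I would pick any $a_0\in\FraC_S\setminus S$. If $a_0$ can be chosen in $\int(\FraC_S)$ that is the easy case; otherwise $a_0$ lies on an extremal ray $\tau_i$, and I would still want to move it into the interior. In either case, set $g=g_1+\cdots+g_n$, a point in the strict interior of $L_{\R_\ge}(S)$, and consider the sequence $a_0, a_0+g, a_0+2g,\ldots$. All of these lie in $\FraC_S$, they all lie in $\int(\FraC_S)$ from some point on, and by finiteness of $\int(\FraC_S)\setminus S$ there is a largest $N$ with $a_0+Ng\notin S$; then $a\coloneqq a_0+Ng\in\FraC_S\setminus S$ while $a+g=a_0+(N+1)g\in S$. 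Writing $a+g = (a+g_1+\cdots+g_{n}) $ — more precisely, consider the intermediate points $a, a+g_1, a+g_1+g_2,\ldots,a+g_1+\cdots+g_n = a+g$. The first is not in $S$, the last is in $S$, so there is a step at which we pass from outside $S$ to inside: there is an index $k$ with $b\coloneqq a+g_1+\cdots+g_{k-1}\notin S$ and $b+g_k\in S$.

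Now $b\in\FraC_S\setminus S$ and $b+g_k\in S$; I need a \emph{second} index. Here I would use that $b+g_k\in S$ together with $b+g\in S$ (which holds, since $b+g = a+g+(g_1+\cdots+g_{k-1})$ is a sum of the element $a+g\in S$ and generators, hence in $S$) to produce it: consider $b+g_k, b+g_k+g_{k+1},\ldots$ running through the remaining generators, all of which are in $S$. Actually the cleanest route: among $b+g_1,\dots,b+g_n$, at least one — namely $b+g_k$ — is in $S$; I want to show a second one is. Suppose not, so $b+g_k\in S$ but $b+g_\ell\notin S$ for all $\ell\neq k$. Then I would replace $b$ by $b'=b+g_k$ and iterate the "add $g$" / "pass through generators" argument starting from $b'$: since $b'\in S$ but we can find gaps arbitrarily deep? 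No — better to argue by minimality. The clean version: choose among all gaps $c\in\FraC_S\setminus S$ with $c+g\in S$ one that is minimal with respect to the partial order $c\le c'$ iff $c'-c\in\langle g_1,\dots,g_n\rangle$; such minimal elements exist because the partial order is a well-quasi-order on $\N^n$ and the set of such gaps is nonempty (it contains $a$ above). For this minimal $c$, pass through the chain $c, c+g_1,\ldots,c+g$ to get $k$ with $c+g_1+\cdots+g_{k-1}=:d\notin S$, $d+g_k\in S$; but $d\le c$ and $d+g\in S$, so by minimality $d=c$, i.e. $c+g_k\in S$. If also $c+g_\ell\in S$ for some $\ell\neq k$ we are done by Theorem~\ref{C-M}(3). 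Otherwise $c+g_\ell\notin S$ for all $\ell\neq k$; then for each such $\ell$, $(c+g_\ell)+g\in S$ and $c+g_\ell\le c+g_\ell$ with $c+g_\ell \not\le c$ — so minimality does not directly apply, and this is the case I expect to be the main obstacle.

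The obstruction case — $c$ a minimal deep gap with $c+g_k\in S$ for exactly one $k$ — I would break by looking at $c+g_k$ versus $c$ inside the cone: write $c=\sum\lambda_ig_i$ with $\lambda_i\in\R_{\ge}$ (unique, since the $g_i$ are an $\R$-basis). Since $c\in\FraC_S\setminus S$ and $c+g\in S$, I would analyze the fractional parts of the $\lambda_i$; the condition $c+g_\ell\notin S$ for $\ell\ne k$ but $c+g\in S$ should force, via the combinatorial description of membership in simplicial semigroups (the Apéry-type / "staircase" structure used in Theorem~\ref{C-M}), that $c+g_k+g_\ell\in S$ for every $\ell$, and then $a'=c+g_k\in\FraC_S$ together with $a'+g_\ell\in S$, $a'+g_k$: I would instead show directly $(c+g_k)\notin S$ is impossible or else apply (3) to $c+g_k$. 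Concretely I expect the right statement to be: either (3) fails at $c$, or $c+g_k\notin S$ contradicts $c+g\in S$ by a parity/length count along the ray $\tau_{g_k}$, as in the membership criterion recalled in Section~\ref{s3}. Pinning down this last dichotomy cleanly is where the real work lies; everything before it is the routine "push the gap into the interior and find the crossing step" argument. I would present the proof by first doing the generic case (a gap in $\int(\FraC_S)$ whose $g_k$-translate re-enters $S$) and then handling boundary gaps by the translation-by-$g$ reduction above.
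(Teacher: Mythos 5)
Your overall strategy --- exhibit $a\in\FraC_S\setminus S$ and two indices $i\neq j$ with $a+g_i,a+g_j\in S$, then invoke Theorem~\ref{C-M}(3) --- is exactly the paper's, but your execution has a genuine hole that you yourself flag: you never produce the second index. The crossing-step argument (walking from a gap $a$ to $a+g_1+\cdots+g_n\in S$ one generator at a time) only ever yields a single $k$ with $b+g_k\in S$, and your attempt to upgrade this via minimality in the divisibility order stalls precisely in the case $c+g_\ell\notin S$ for all $\ell\neq k$, which you leave open with an appeal to an unspecified ``parity/length count''. Minimality is the wrong direction here. The finiteness of $\int(\FraC_S)\setminus S$ should instead be used to pick a \emph{norm-maximal} gap: if $\int(\FraC_S)\setminus S\neq\emptyset$, choose $P$ in this finite set with $||P||\geq ||P'||$ for all $P'$ in it. Since $P,g_i\in\N^n$ and $g_i\neq 0$, each $P+g_i$ has strictly larger norm and still lies in $\int(\FraC_S)$ (if $P+g_i=\lambda g_j$ then $P=\lambda g_j-g_i$ has a negative coordinate in the basis $\{g_1,\dots,g_n\}$ unless $j=i$ and $P\in\tau_i$), so $P+g_i\in S$ for \emph{every} $i\in[n]$ --- all $n$ indices at once, with no crossing-step analysis needed.

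The remaining case $\int(\FraC_S)\setminus S=\emptyset$ must then be treated separately: nonproperness forces a gap $P$ on some extremal ray, say $\tau_1$, and $P+g_i\in\int(\FraC_S)\subset S$ for all $i\neq 1$, which again supplies two indices. Your translation-by-$g$ reduction pushes boundary gaps into the interior and thereby obscures this easy case rather than exploiting it. In short: the skeleton is right, but the one idea that closes the proof --- extremality of the gap with respect to distance from the origin, combined with the case split on whether the interior gap set is empty --- is missing, and without it the argument as written does not go through.
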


\begin{proof}
If we assume that $\int (\FraC_S) \setminus S$ is a nonempty finite set, there exists
$P\in \int (\FraC_S) \setminus S$ such that  $||P||\ge ||P'||$ for all
$P'\in \int (\FraC_S) \setminus S.$ It is straightforward to prove that this element
satisfies $P+g_i\in S$ for any $i\in [n]$.
If $\int (\FraC_S) \setminus S$ is empty, let $P\in \FraC_S\setminus S$ be a point
belonging to an extremal ray of $S,$ for example $\tau_1.$ Note that
$P+g_i\in\int(\FraC_S)\subset S$ for all $i\in [n]\setminus \{1\}.$
In any case, by Theorem \ref{C-M}, this implies that $S$ is not Cohen--Macaulay.
\end{proof}

We now study the three dimensional case.
Let $\CaP$ be a convex polyhedron delimited by the noncoplanar vertex set
$\{P_1,\ldots ,P_q\}$. From now on, we assume that its associated semigroup $\FraP$ is
simplicial, finitely generated, and that $\tau_1,$ $\tau_2$, and $\tau_3$ are the extremal rays of
$L_{\R\ge}(\CaP)$.
We denote by $\{g_1,g_2 ,g_3,\ldots ,g_t\}$ the minimal generating set of $\FraP$,
and assume that  $L_{\R_\geq}(\FraP)=L_{\R_\geq}(\{g_1,g_2,g_3\})$.

The construction given in Definition \ref{rearrange}, and some properties of the sets $G_i^k$, are used in the following result to characterize the Cohen--Macaulayness of some convex polyhedron semigroups.

\begin{lemma}\label{1pto_2segmentosCM}
Let $\CaP\subset \R^3_\ge$ be a convex polyhedron such that $\mathcal{W}=\emptyset$, or
$\CaP\cap \tau_1=\{P_1\}$ and $\CaP\cap \tau_i$ is a segment for $i=2,3.$ The affine semigroup
$\FraP$ is Cohen--Macaulay if and only if $\FraC_\FraP=\FraP.$
\end{lemma}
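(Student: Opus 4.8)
The plan is to show both implications via Theorem \ref{C-M}(3), which says that $\FraP$ is Cohen--Macaulay iff for every $a\in\FraC_\FraP\setminus\FraP$ and every pair of distinct indices $i,j\in\{1,2,3\}$ one does not have both $a+g_i\in\FraP$ and $a+g_j\in\FraP$. The direction ``$\FraC_\FraP=\FraP \Rightarrow$ Cohen--Macaulay'' is immediate: the set $\FraC_\FraP\setminus\FraP$ is empty, so condition (3) holds vacuously. All the work is in the converse, which I would prove by contraposition: assuming $\FraC_\FraP\setminus\FraP\neq\emptyset$, I will exhibit an element $a\in\FraC_\FraP\setminus\FraP$ and two indices $i\neq j$ with $a+g_i,a+g_j\in\FraP$, thereby violating (3).

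To produce such an $a$, I would exploit the geometric decomposition of $\overline{\LL}$ from Section \ref{s4}. Since $\CaP\cap\tau_1=\{P_1\}$ we have $P_1\in\mathcal{W}$, so $\mathcal{W}\neq\emptyset$ and Lemmas \ref{kcota}--\ref{lemak0}, Proposition \ref{igualdad}, and Corollary \ref{traslacion} all apply with $\mathcal{W}=\{P_1\}$ (the only extremal ray meeting $\CaP$ in a single point is $\tau_1$, while $\tau_2,\tau_3$ meet $\CaP$ in segments, so their endpoints lie in $\mathcal{W}_1\cup\mathcal{W}_2$, not in $\mathcal{W}$). Because $\FraC_\FraP\setminus\FraP = \LL\cap\N^3$ and, by Lemma \ref{deltaP} together with the translation structure $G_1^{\kappa_0+j}=G_1^{\kappa_0}+jP_1$, the lattice points of $\LL$ accumulate along the ray $\tau_1$, I would pick a lattice point $a\in\LL$ that is ``extremal'' in $\overline{\LL}_k$ for large $k$ in the following sense: $a$ lies in $\Tint(G_1^{k})$ near the face $T^k_{1j}$ opposite $\tau_1$, chosen so that $a+g_2$ and $a+g_3$ both land in $k\CaP\cup(k+1)\CaP\subset\cup_{j}j\CaP$, hence in $\FraP$. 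Concretely, since $g_2,g_3$ are positive multiples of points on $\tau_2,\tau_3$ and the vertices $Q_j\in\GG_{P_1}^k$ lie on $k\partial\CaP\cap(k+1)\partial\CaP$, a suitable choice of $a$ just outside $k\CaP$ but inside the convex hull $\H(k\CaP\cup(k+1)\CaP)$ has the property that adding $g_2$ or $g_3$ pushes it across $k\partial\CaP$ into the interior of $k\CaP$ or $(k+1)\CaP$. Alternatively, and perhaps more cleanly, I would invoke the argument of Corollary \ref{lemma_no_C-M}: if $\int(\FraC_\FraP)\setminus\FraP$ is nonempty pick the element of maximal norm in it, and if it is empty pick a point of $\FraC_\FraP\setminus\FraP$ on $\tau_1$; in either case the maximality/boundary position forces $a+g_i\in\FraP$ for the relevant indices.

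The main obstacle is the case distinction and verifying that the chosen $a$ really has two ``good'' directions rather than just one. Corollary \ref{lemma_no_C-M} handles the situation when $\int(\FraC_\FraP)\setminus\FraP$ is finite, but a priori, with a segment along $\tau_2$ and $\tau_3$, the set $\FraC_\FraP\setminus\FraP$ may be infinite and spread along $\tau_1$; so I cannot directly take a norm-maximal element. The fix is to use Corollary \ref{traslacion}/Lemma \ref{deltaP}: modulo translation by $h_{P_1}P_1$, there are only finitely many ``shapes'' of $G_1^{k}\cap\N^3$, so I can reduce to a norm-maximal lattice point $a$ within a single translated copy $G_1^{\kappa_0+r}$, $0\le r<h_{P_1}$. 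For that $a$, maximality within its slab forces $a+P_1$ (hence $a+g_1$, since $g_1$ is a positive integer multiple of $P_1$ up to the generator normalization) to leave $\LL$ and enter $\cup_j j\CaP$; and the position of $a$ relative to the faces $T^k_{1j}$ meeting $\tau_2$ and $\tau_3$ — which exist precisely because $\tau_2\cap\CaP$ and $\tau_3\cap\CaP$ are segments, so $\GG_{P_1}^k\setminus\tau_1$ is nonempty and reaches both neighboring rays — forces at least one of $a+g_2$, $a+g_3$ into $\FraP$ as well. Assembling these, condition (3) of Theorem \ref{C-M} fails, so $\FraP$ is not Cohen--Macaulay, completing the contrapositive.
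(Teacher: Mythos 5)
Your easy direction and your overall strategy (contraposition via Theorem \ref{C-M}(3)) agree with the paper, but the construction of the witness $a$ has a genuine gap, and the specific claim you lean on is false. If $a\in\Tint(G_1^{k})\cap\N^3$, then by the very translation structure you invoke (Corollary \ref{traslacion} and Lemma \ref{deltaP}) one has $a+g_1\in\Tint(G_1^{k+h_{P_1}})\subset\LL$, because $g_1=h_{P_1}P_1$ and $G_1^{k+h_{P_1}}=G_1^{k}+h_{P_1}P_1$; hence $a+g_1\notin\FraP$ no matter how you choose $a$ inside its slab. So ``norm-maximality within one translated copy'' cannot force $a+g_1$ to leave $\LL$: the gaps propagate indefinitely along $\tau_1$ precisely because $\tau_1\cap\CaP$ is a single point. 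Consequently both good directions must come from $\{g_2,g_3\}$, and your argument only asserts (without justification) that \emph{at least one} of $a+g_2,a+g_3$ lies in $\FraP$, which does not suffice to violate condition (3).

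The missing idea, which is the heart of the paper's proof, is a finiteness statement in the $g_2,g_3$ directions: since $G_1^{j+1}=G_1^{j}+P_1$ and $\overrightarrow{OP_1},\overrightarrow{OP_2},\overrightarrow{OP_3}$ are linearly independent, the distance from $G_1^{j}$ to the plane through $O,P_2,P_3$ is strictly increasing, so for every $P\in\FraC_\FraP$ the set $(\FraC_\FraP\setminus\FraP)\cap\{P+\lambda_2g_2+\lambda_3g_3\mid\lambda_2,\lambda_3\in\R_\ge\}$ is finite. Given any gap $P$, one then takes the least $\lambda_2\in\N$ with $P+\lambda_2 g_2\in\FraP$ (so $P+(\lambda_2-1)g_2\notin\FraP$), and from $P+(\lambda_2-1)g_2$ the analogous first entry $\lambda_3$ in the $g_3$ direction; the point $P'=P+(\lambda_2-1)g_2+(\lambda_3-1)g_3$ is not in $\FraP$, while $P'+g_2=(P+\lambda_2g_2)+(\lambda_3-1)g_3$ and $P'+g_3=P+(\lambda_2-1)g_2+\lambda_3g_3$ both lie in $\FraP$. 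This double ``last gap'' walk is what replaces your maximality argument; without it (or an equivalent), the converse implication is not established.
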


\begin{proof}
If $\mathcal{W}=\emptyset$, $\tau_i\cap \CaP$ is a segment for all $i\in [3],$
$\FraC_\FraP\setminus\FraP$ is a finite set.
We conclude that $\FraP$ is Cohen--Macaulay iff $\FraC_\FraP\setminus\FraP$ is
the empty set (see Corollary \ref{lemma_no_C-M}).

For the other case, we assume that $\CaP\cap \tau_2= \overline{P_2P'_2}$ and $\CaP\cap \tau_3= \overline{P_3P'_3},$ and let $\kappa_0\in\N$ be as in Lemma \ref{kcota} (Figure \ref{poliedro_1pto_2segment} illustrates these cases).
\begin{figure}[h]
\begin{center}
\begin{tabular}{|c|}\hline
\includegraphics[scale=.22]{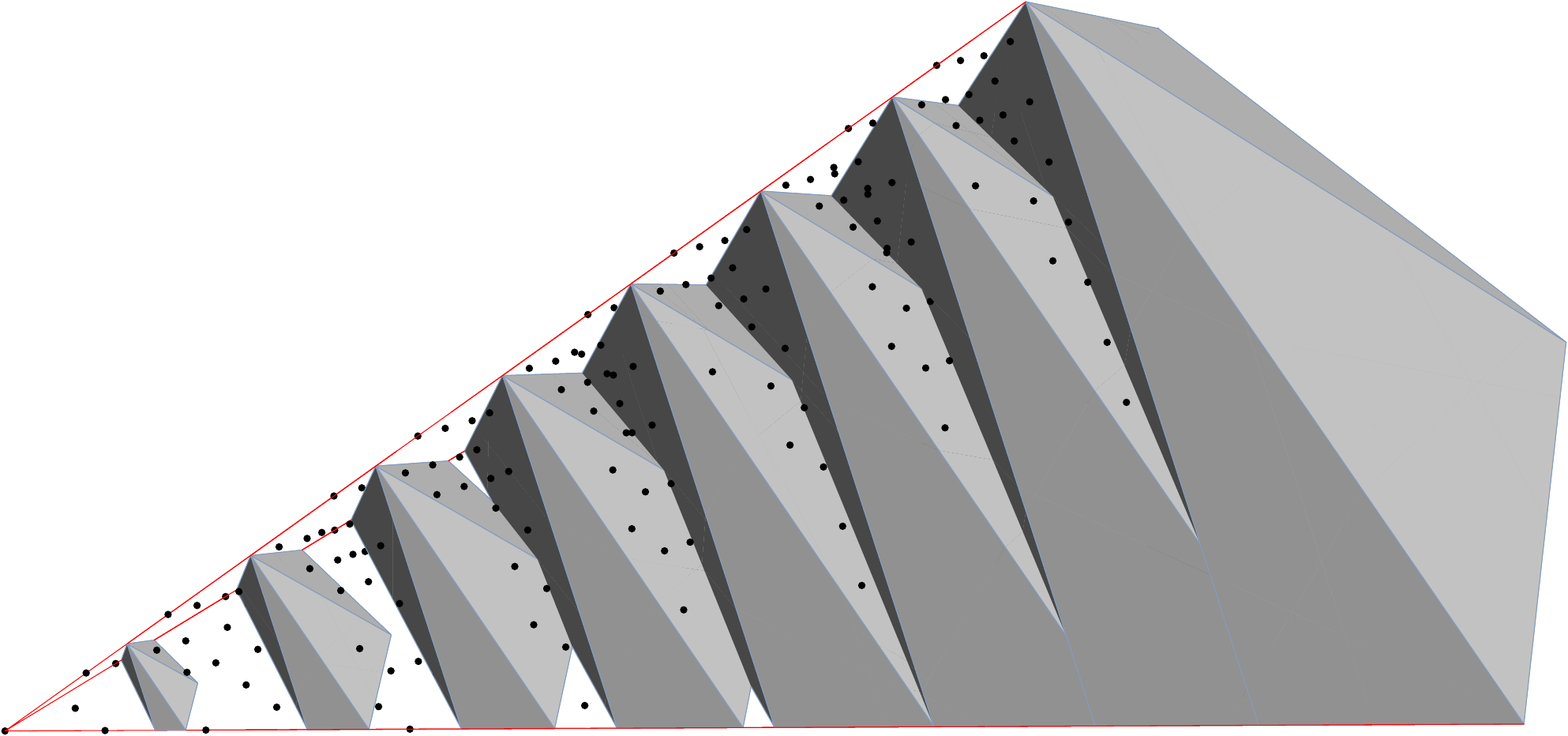}\\
\hline
\end{tabular}
\caption{Example of a non Cohen--Macaulay convex polyhedron semigroup.}\label{poliedro_1pto_2segment}
\end{center}
\end{figure}
We have
$\FraC_\FraP\setminus\FraP=\LL\cap\N^3\subset
\big(\H(\{O,\kappa_0P_1,\kappa_0P_2,\kappa_0P_3\}) \bigcup  \cup_{j\geq \kappa_0}G^j_1\big)\cap \N^3$ (see (\ref{igualdad})).

Since $G^{j+1}_1=G^j_1+P_1$ for $j\geq \kappa_0$ and the vectors $\{\overrightarrow{OP_1},\overrightarrow{OP_2},\overrightarrow{OP_3}\}$ are linearly independent, the distance from $G^j_1$ to the plane containing $\{O, P_2, P_3\}$ is strictly increasing. So, the set $(\FraC_\FraP\setminus \FraP)\cap \{P+\lambda _2 g_2+\lambda_3 g_3\mid \lambda_2,\lambda_3\in\R_\ge\}$ is finite for every $P\in \FraC_\FraP.$

Consider now any $P$ in $\FraC_\FraP\setminus \FraP,$ thus there exists $\lambda_2\in\N$ such that $P+\lambda_2 g_2\in\FraP$ but $P+(\lambda_2-1) g_2 \notin \FraP.$ Again, for a fixed $P+(\lambda_2-1) g_2,$ there exists $\lambda_3\in\N$ such that $P+(\lambda_2-1) g_2+\lambda_3g_3\in\FraP$ but $P'=P+(\lambda_2-1) g_2 +(\lambda_3-1)g_3\notin \FraP.$ Then, $P'+g_2$ and $P'+g_3$ are in $\FraP.$
By Theorem \ref{C-M}, the semigroup $\FraP$ satisfying the initial hypothesis is Cohen--Macaulay iff $\FraC_\FraP=\FraP.$
\end{proof}

Suppose $\mathcal{W}=\{P_1,P_2,P_3\},$ $\kappa_0$ is the integer defined in Lemma \ref{kcota}, and
let $Q_1,Q_1'\in G_{1}^{\kappa_0},$ $Q_2\in G_{2}^{\kappa_0}$ and $Q_3\in G_{3}^{\kappa_0}$ be four
points such that the segments $\overline{Q_1Q_2},\overline{Q_1'Q_3}$ are two edges of
$\kappa_0\CaP\cap (\kappa_0+1)\CaP$ parallel to $\overline{P_1P_2}$ and
$\overline{P_1P_3}$ respectively.

Recall that for any $j\in \N,$ the points
$\{\kappa_0P_1+jP_1,(\kappa_0+1)P_1+jP_1,\kappa_0P_2+jP_2,(\kappa_0+1)P_2+jP_2,Q_1+jP_1,Q_2+jP_2\}$ and
$\{\kappa_0P_1+jP_1,(\kappa_0+1)P_1+jP_1,\kappa_0P_3+jP_3,(\kappa_0+1)P_3+jP_3,Q'_1+jP_1,Q_3+jP_3\}$
determine the sets $\widetilde{G}_{12}^{\kappa_0+j}$ and $\widetilde{G}_{31}^{\kappa_0+j}$, respectively
(see Remark \ref{rearrange1}).
Since the segments $\overline{(Q_1+(k-\kappa_0)P_1)(Q_2+(k-\kappa_0)P_2)}$ and
$\overline{(Q_1'+(k-\kappa_0)P_1)(Q_3+(k-\kappa_0)P_3)}$ increase their lengths without limit as the
nonnegative integer $k$ increases, the distance between $G_1^k$ and $G_2^k\cup G_3^k\cup\widetilde{G}_{23}^k$ goes to infinity.
In this case, we define $k_{13}\ge \kappa_0$ to be the minimal integer such that
$P+g_2,P+g_3\notin \cup _{k\ge k_{13}} (G_{2}^k\cup G_3^k\cup \widetilde{G}_{23}^k)$ for every
$P\in \cup _{k\ge k_{13}} G_{1}^k.$
In general, for any $i\in [3],$ we define $k_{i3}\ge \kappa_0$ to be the minimal integer
such that
$P+g_{i_1}, P+g_{i_2}\notin \cup _{k\ge k_{i3}}(G_{i_1}^k \cup G_{i_2}^k \cup \widetilde{G}_{i_1i_2}^k)$ for every $P\in \cup_{k\geq k_{i3}}G_{i}^k$
with $i_1,i_2\in [3]\setminus\{i\}.$ We fix $k_3=\max \{k_{13},k_{23},k_{33}\}.$

The following lemma allows us to characterize  nontrivial cases of Cohen--Macaulay
convex polyhedron semigroups.

\begin{lemma}\label{esquinas}
Let $P_i\in\mathcal{W}$ with $i\in[3]$ and
let $P$ be a point in $\Tint(G_{i}^k)$ for any integer $k\ge k_3+h_{P_i}$ and such that $P+ g_{j}\in\FraP$ with $j\in[3]\setminus\{i\}.$ Then for fixed $t$ and $q$ the nonnegative integer quotient and least positive remainder (respectively) of the integer division $(k-k_3)/h_{P_i}$, the point $P-tg_i$ belonging to $\Tint(G_{i}^{k_3+q}) \cap \N^3$ is such that $P-tg_i+g_{j}\in \FraP.$
\end{lemma}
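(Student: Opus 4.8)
The plan is to exploit the translation structure of the sets $G_i^k$ established in Corollary \ref{traslacion} and Lemma \ref{deltaP}, reducing the statement about an arbitrary large $k$ to a bounded range of values. First I would write $k = k_3 + h_{P_i}\cdot t + q$ with $0 \le q < h_{P_i}$ (adjusting slightly so that $k_3 + q \ge \kappa_0$, which holds since $k_3 \ge \kappa_0$); this is exactly the integer division referenced in the statement. By Corollary \ref{traslacion} we have $\GG_{P_i}^{k} = \GG_{P_i}^{k_3+q} + (th_{P_i})P_i$, and hence, since $G_i^k$ is the polytopal region determined by $\GG_{P_i}^k$ (Remark \ref{rearrange}), $G_i^k = G_i^{k_3+q} + (th_{P_i})P_i$. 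Because $g_i$ is the minimal generator on the ray $\tau_i$ and $P_i \in \mathcal W \subset \tau_i$, the vector $(th_{P_i})P_i$ is an integer multiple of $g_i$, say $(th_{P_i})P_i = t g_i$ after identifying the scaling (this is where one uses that $h_{P_i}P_i$ is the primitive lattice point on $\tau_i$, equivalently a nonnegative integer multiple of $g_i$; I would make this identification explicit). Consequently $P - tg_i \in \Tint(G_i^{k_3+q})$, and it lies in $\N^3$ because both $P$ and $tg_i$ are in $\N^3$.

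It remains to transfer the membership hypothesis $P + g_j \in \FraP$ down to $P - tg_i + g_j \in \FraP$. Here the key point is the definition of $k_3$: for every integer $k \ge k_3$ and every $P \in \bigcup_{k \ge k_3} G_i^k$, one has $P + g_j \notin \bigcup_{k \ge k_3}\big(G_{i_1}^k \cup G_{i_2}^k \cup \widetilde G_{i_1 i_2}^k\big)$ where $\{i_1,i_2\} = [3]\setminus\{i\}$. Combined with the summary decomposition $\LL = \big(\H(\{O,\kappa_0 P_1,\ldots\})\setminus \bigcup_{k<\kappa_0}k\CaP\big) \cup \bigcup_{k\ge\kappa_0}\bigcup_i \Tint(G_i^k \cup \widetilde G_{i\sigma(i)}^k)$, the only way $P + g_j$ could fail to lie in $\FraP = \FraC_\FraP \setminus (\LL \cap \N^3)$ is if $P + g_j \in \Tint(G_i^k \cup \widetilde G_{\cdot}^k)$ for the same index $i$ — but adding $g_j$ ($j \ne i$) moves a point of $\Tint(G_i^k)$ off the "wedge" attached to $\tau_i$; more precisely, since the displacement $g_j$ is not along $\tau_i$, the point $P + g_j$ lies strictly beyond the relevant facet of $k\CaP \cap (k+1)\CaP$ and hence in $\bigcup_{j'} k'\CaP$. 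So $P + g_j \in \FraP$ forces $P + g_j$ to be inside $\bigcup_{j'\in\N}j'\CaP$, i.e. in the semigroup genuinely (not merely by the wedge).

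Now apply the translation in reverse: since $G_i^{k_3+q} = G_i^k - tg_i$ and the decomposition of $\LL_{k'}$ for all $k' \ge \kappa_0$ is obtained from $\LL_{\kappa_0}$ by the $P_i$-translations, the "filling" of $\bigcup_{j'}j'\CaP$ around $G_i$ translates compatibly: $Q \in \bigcup_{j'}j'\CaP$ with $Q$ adjacent (in the $g_j$-direction) to $\Tint(G_i^k)$ descends to $Q - tg_i \in \bigcup_{j'}j'\CaP$ adjacent to $\Tint(G_i^{k_3+q})$. Applying this with $Q = P + g_j$ and noting $Q - tg_i = (P - tg_i) + g_j$, we conclude $(P - tg_i) + g_j \in \bigcup_{j'}j'\CaP \cap \N^3$; since it is also clearly in $\FraC_\FraP$, it is in $\FraP$.

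The main obstacle I anticipate is the bookkeeping in the last paragraph: one must verify that the monotone growth of $G_i^k$ away from $G_{i_1}^k \cup G_{i_2}^k$ (used to define $k_3$) genuinely guarantees that a point obtained from $\Tint(G_i^k)$ by a step $+g_j$ cannot land back in any $\widetilde G$-prism or in a neighbouring $G$-region for the \emph{same} large $k$, and that the requirement $k \ge k_3 + h_{P_i}$ (rather than merely $k \ge k_3$) is exactly what makes $k_3 + q \ge k_3$ hold after subtracting $th_{P_i}P_i$ — i.e. that $t \ge 1$ so the translate stays in the range where the defining property of $k_3$ is available. Lemma \ref{prisma} and the parallelism statements in Remark \ref{rearrange} should be invoked to control the $\widetilde G$-prisms, and Lemma \ref{deltaP} gives the precise form $G_i^k \cap \N^3 = (G_i^{k_3+q}\cap\N^3) + th_{P_i}P_i$ that one needs at the lattice-point level.
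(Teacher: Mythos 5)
Your setup is right: writing $k = k_3 + t h_{P_i} + q$ and using Corollary \ref{traslacion} and Lemma \ref{deltaP} to identify $G_i^k = G_i^{k_3+q} + t h_{P_i} P_i$ with $t h_{P_i} P_i = t g_i$, so that $P - t g_i \in \Tint(G_i^{k_3+q}) \cap \N^3$, is exactly the intended reduction, and you correctly single out the definition of $k_3$ together with Lemmas \ref{deltaP} and \ref{prisma} as the tools for transferring membership. The gap is in the last step. You deduce $(P - tg_i) + g_j \in \bigcup_{j'} j'\CaP$ from $P + g_j \in \bigcup_{j'} j'\CaP$ by a ``reverse translation of the filling'', i.e.\ by subtracting $t g_i$ from a point of the semigroup. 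Nothing supports that direction: the translation statements (Corollary \ref{traslacion}, Lemmas \ref{deltaP} and \ref{prisma}) say that the \emph{gap} sets attached to $\tau_i$ are carried \emph{forward} into gap sets at higher levels when multiples of $P_i$ are added; they say nothing about semigroup membership being preserved under subtraction --- and it is not in general, which is the whole reason the lemma needs a proof. Moreover $\LL_m - t g_i$ is not $\LL_{m - t h_{P_i}}$, because the pieces $G_{i_1}^m$, $G_{i_2}^m$, $\widetilde{G}_{i_1 i_2}^m$ translate along $P_{i_1}$ and $P_{i_2}$, not along $P_i$, so the decomposition of $\LL$ does not ``descend compatibly'' under $-t g_i$ as you assert.

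The missing idea is to argue by contradiction in the forward direction, which is what the paper does. Suppose $P - t g_i + g_j \notin \FraP$; then it lies in $\overline{\LL}_m = G_1^m \cup G_2^m \cup G_3^m \cup \widetilde{G}_{12}^m \cup \widetilde{G}_{23}^m \cup \widetilde{G}_{31}^m$ for some integer $m \ge k_3$. If it lies in one of the three pieces attached to $\tau_i$, then adding $t g_i$ back --- now a legitimate forward translation, by Lemmas \ref{deltaP} and \ref{prisma} --- places $P + g_j$ in $\overline{\LL}_{m + t h_{P_i}}$, contradicting the hypothesis $P + g_j \in \FraP$. If it lies in one of the remaining three pieces, this contradicts the definition of $k_3$ applied to the point $P - t g_i \in \Tint(G_i^{k_3+q})$. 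Note that your middle paragraph applies the $k_3$-exclusion to $P + g_j$ rather than to $P - t g_i + g_j$, which is where it is actually needed; once you redirect it there and replace the backward pull by this two-case contradiction, the proof closes.
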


\begin{proof}
In order to simplify the notation, we assume $i=1,$ $j=2$ and $\mathcal{W}=\{P_1,P_2,P_3\};$ if not, some of the sets involved in (\ref{union_set}) could be empty.
Supposing that $P-tg_1+g_2\notin \FraP$, then there exists an integer $m\ge k_3$ such that $P-tg_1+g_2$ belongs to the union
\begin{equation}\label{union_set}
\overline{\LL}_m=G_{1}^{m}\cup G_{2}^{m} \cup G_{3}^{m}\cup \widetilde{G}_{12}^m\cup \widetilde{G}_{23}^m\cup \widetilde{G}_{31}^m.
\end{equation}
If $P-tg_1+g_2$ belongs to $G_1^m \cup \widetilde{G}_{12}^m\cup \widetilde{G}_{31}^m,$ by Lemmas \ref{deltaP} and \ref{prisma}, the element
$P-tg_1+g_2+tg_1$ is in $G_1^{m+th_{P_1}} \cup \widetilde{G}_{12}^{m+th_{P_1}}\cup \widetilde{G}_{31}^{m+th_{P_1}},$ and therefore
$P-tg_1+g_2+tg_1=P+g_2$ is not in $\FraP,$  which is a contradiction.
Besides, since $P-tg_1$ belongs to $\Tint(G_{1}^{k_3+q}),$ $P-tg_1+g_2$ is not in the union  $G_{2}^m \cup G_{3}^m \cup  \widetilde{G}_{23}^m.$
We conclude that $P-tg_1+g_2$ necessarily belongs to $\FraP.$
\end{proof}

Define now the set
$$\mathcal{G}=\cup _{j=0}^{h_{P_1}-1} \Tint(G_{1}^{k_3+j}) \bigcup \cup _{j=0}^{h_{P_2}-1} \Tint(G_{2}^{k_2+j}) \bigcup \cup _{j=0}^{h_{P_3}-1} \Tint(G_{3}^{k_3+j}).$$
The following result concludes  our study of the Cohen--Macaulay property.

\begin{theorem}\label{theorem_CM_3D}
Let $\CaP$ be a convex polyhedron such that
$\mathcal{W}=\{P_1,P_2,P_3\}$, or $\mathcal{W}=\{P_1,P_2\}$ and $\mathcal{W}_1=\{P_3\}$.
The affine semigroup $\FraP$ is Cohen--Macaulay if and only if
for any integer point $P$ belonging to the set $\mathcal{G}\cup \big(\H(\{O,k_3P_1,k_3P_2,k_3P_3\})\setminus \FraP\big)$ there are no two different integers $i,j\in[3]$ such that $P+g_i,P+g_j\in\FraP.$
\end{theorem}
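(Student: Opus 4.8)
The strategy is to invoke the characterization of Cohen--Macaulayness in Theorem~\ref{C-M}(3): $\FraP$ is Cohen--Macaulay if and only if for every $a\in\FraC_\FraP\setminus\FraP$ and every pair of distinct $i,j\in[3]$ we do \emph{not} have both $a+g_i\in\FraP$ and $a+g_j\in\FraP$. So the task reduces to showing that the (generally infinite) set of ``bad'' elements $\FraC_\FraP\setminus\FraP$ can be tested by looking only at the finite set $\mathcal{G}\cup\big(\H(\{O,k_3P_1,k_3P_2,k_3P_3\})\setminus\FraP\big)$. The ``only if'' direction is immediate: the test set is a subset of $\FraC_\FraP\setminus\FraP$ (by construction $\Tint(G_i^k)\subset\LL$ and the truncated hull minus $\FraP$ consists of lattice points in $\FraC_\FraP$ not in $\FraP$), so if $\FraP$ is Cohen--Macaulay the condition holds on the test set a fortiori.

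\textbf{The ``if'' direction.} Assume the displayed condition holds on the test set; we must show it holds on all of $\FraC_\FraP\setminus\FraP$. By the summarizing formula at the end of Section~\ref{s4}, together with Proposition~\ref{igualdad}, every point of $\FraC_\FraP\setminus\FraP=\LL\cap\N^3$ lies either in $\H(\{O,\kappa_0P_1,\ldots,\kappa_0P_t\})\setminus\cup_{k<\kappa_0}k\CaP$ — hence, enlarging to $k_3\ge\kappa_0$, in $\H(\{O,k_3P_1,k_3P_2,k_3P_3\})$ — or in $\Tint(G_i^k\cup\widetilde G_{i\sigma(i)}^k)$ for some $i\in[3]$ and some $k\ge\kappa_0$. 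The first case is handled directly by hypothesis. For the second case, first dispose of the ``tilde'' pieces and the low layers $\kappa_0\le k<k_3$: a point in $\widetilde G_{i\sigma(i)}^k$ with $k\ge k_3$ falls under the $k_{i3}$ bound, so by the very definition of $k_{i3}$ neither $P+g_{i_1}$ nor $P+g_{i_2}$ leaves the relevant $G\cup\widetilde G$ block, forcing $P+g_{i_1},P+g_{i_2}\notin\FraP$; the layers below $k_3$ are absorbed into the truncated hull. For a point $P\in\Tint(G_i^k)$ with $k\ge k_3+h_{P_i}$, apply Lemma~\ref{esquinas}: writing $k-k_3=th_{P_i}+q$ with $0\le q<h_{P_i}$, the translate $P-tg_i\in\Tint(G_i^{k_3+q})\cap\N^3\subset\mathcal{G}$, and Lemma~\ref{esquinas} transfers the property ``$P+g_j\in\FraP$'' up and down between $P$ and $P-tg_i$ for each $j\ne i$; so if $P$ violated the condition then so would $P-tg_i$, contradicting the hypothesis on $\mathcal{G}$. (One must also check the boundary range $k_3\le k<k_3+h_{P_i}$, where $P$ is already in $\mathcal{G}$.) Finally, in the case $\mathcal{W}=\{P_1,P_2\}$, $\mathcal{W}_1=\{P_3\}$, the set $G_3^k$ is empty by the convention in Remark~\ref{rearrange}, and the $\widetilde G_{23}^k$, $\widetilde G_{31}^k$ pieces degenerate, so only the $G_1^k$, $G_2^k$, $\widetilde G_{12}^k$ contributions survive, and the same argument applies with the obvious deletions.

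\textbf{Main obstacle.} The delicate point is the bookkeeping that guarantees the ``bad pair'' property is genuinely \emph{translation-invariant} along $g_i$ within the column $\cup_{k\ge k_3}\Tint(G_i^k)$: one needs that adding $tg_i=th_{P_i}P_i$ moves $G_i^k$ exactly to $G_i^{k+th_{P_i}}$ (Lemma~\ref{deltaP}), that it does \emph{not} accidentally land the perturbed point $P+g_j$ inside a far-away block $G_{i'}^{m}$ or $\widetilde G$-block for $i'\ne i$ (this is precisely what the constants $k_{i3}$, hence $k_3$, are engineered to prevent, via the ``distance goes to infinity'' observation preceding Lemma~\ref{esquinas}), and that the residue class $q$ of $k$ modulo $h_{P_i}$ is the only data that matters — so that the finitely many layers $\Tint(G_i^{k_3}),\ldots,\Tint(G_i^{k_3+h_{P_i}-1})$ genuinely represent all layers. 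Once Lemmas~\ref{deltaP}, \ref{prisma}, and \ref{esquinas} are in hand this is essentially a careful case split; the substantive geometric content has already been isolated in those lemmas, so the proof itself is an assembly argument rather than a new computation.
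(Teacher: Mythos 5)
Your proof is correct and follows essentially the same route as the paper: reduce to Theorem \ref{C-M}(3), then split $\FraC_\FraP\setminus\FraP$ according to the decomposition into the truncated hull $\H(\{O,k_3P_1,k_3P_2,k_3P_3\})$, the $\widetilde{G}$ blocks (where both relevant translates stay outside $\FraP$), and the columns $\cup_k G_i^k$ (where Lemma \ref{esquinas} reduces the check to the finite set $\mathcal{G}$). The one slip is attributing the disposal of the $\widetilde{G}_{i\sigma(i)}^k$ pieces to the definition of $k_{i3}$ --- that constant only controls translates of points lying in $G_i^k$ --- whereas the correct tool is Lemma \ref{prisma} (which shows $P+g_i$ and $P+g_{\sigma(i)}$ remain in later $\widetilde{G}_{i\sigma(i)}$ blocks, hence outside $\FraP$); since you cite that lemma elsewhere and the conclusion you draw is exactly the one it yields, nothing substantive is lost.
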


\begin{proof}
Trivially, if $\FraP$ is Cohen--Macaulay, the theorem is satisfied.

Conversely, note that the elements in $\FraC_\FraP\setminus\FraP$ are the integer
elements in the set
$$
\begin{multlined}
T= \cup _{j\in\N} (G_{1}^{k_3+j}\cup  G_{2}^{k_3+j} \cup G_{3}^{k_3+j} \cup \widetilde{G}_{12}^{k_3+j}\cup
\widetilde{G}_{23}^{k_3+j} \cup \widetilde{G}_{31}^{k_3+j}
) \\
\cup \H(\{O,k_3P_1,k_3P_2,k_3P_3\})
\end{multlined}
$$
but not in $\FraP,$ and for any point $P$ belonging to
$\cup_{j\in\N} (\widetilde{G}_{12}^{k_3+j}\cup \widetilde{G}_{23}^{k_3+j}\cup \widetilde{G}_{31}^{k_3+j}
)\setminus \FraP,$ there exist two integers
$i_1,i_2\in [3]$ such that  $P+g_{i_1}$ and $P+g_{i_2}$ do not belong to $\FraP$ by Lemma \ref{prisma}.

Consider $P\in \FraC_\FraP\setminus \FraP,$ then $P\in (T\cap \N^3 )\setminus\FraP.$ If
$P\in \cup _{j\in\N} (G_{1}^{k_3+j}\cup  G_{2}^{k_3+j} \cup G_{3}^{k_3+j}),$ and
 $i_1,i_2\in[3]$ satisfy $P+g_{i_1},P+g_{i_2}\in\FraP,$ then there exists $P' \in \mathcal{G}$
satisfying the same property (Lemma \ref{esquinas}), which is not possible by the
hypothesis of the theorem.
Analogously, if $P\in \H(\{O,k_3P_1,k_3P_2,k_3P_3\})\setminus \FraP,$ then there do not exist
$i_1,i_2\in[3]$ with $P+g_{i_1},P+g_{i_2}\in\FraP.$

In any case, for every $P\in \FraC_\FraP\setminus \FraP,$ there are no  $i_1,i_2\in[3]$ such that $P+g_{i_1}$ and $P+g_{i_2}$ are both in $\FraP.$ By Theorem \ref{C-M}, $\FraP$ is Cohen--Macaulay.
\end{proof}

From the previous results, an algorithm to check whether a convex polyhedron semigroup is Cohen--Macaulay can be obtained. This algorithm is explicitly showed in Algorithm \ref{algorithm}. For this algorithm we assume that the semigroup associated to the initial input is a simplicial and affine semigroup.

\begin{algorithm}[H]
\caption{Sketch of the algorithm to determinate if an affine simplicial convex polyhedron semigroup is Cohen--Macaulay.}\label{algorithm}
\textbf{Input:} The convex polyhedron $\CaP.$\\
\textbf{Output:} Cohen--Macaulayness of $\FraP$ is determined.
\begin{algorithmic}[1]
\State Compute the set $\mathcal{W}.$
\If {the cardinality of $\mathcal{W}$ is less than or equal to 1}
\If {$\FraP=\FraC_\FraP$}
\State \Return $\FraP$ is Cohen--Macaulay.
\Else
\State \Return {$\FraP$ is not Cohen--Macaulay.} \EndIf
\EndIf
\State Compute $\mathcal{W}_1$ and assume that $\mathcal{W}=\{P_1,P_2,P_3\}$, or $\mathcal{W}=\{P_1,P_2\}$ and $\mathcal{W}_1=\{P_3\}$.
\State Compute $k_3$ and the minimal generators $g_1,$ $g_2$ and $g_3.$
\ForAll {$P\in \N^3\cap \left(\mathcal{G}\cup \big(\H(\{O,k_3P_1,k_3P_2,k_3P_3\})\setminus \FraP\big)\right)$}
\If{there exist $i,j\in[3]$ such that $P+g_i,P+g_j\in\FraP$}
\State \Return {$\FraP$ is not Cohen--Macaulay.}
\EndIf
\EndFor
\State \Return {$\FraP$ is Cohen--Macaulay.}
\end{algorithmic}
\end{algorithm}

The simplest case of a convex polyhedron is given by the tetrahedron.
The next result proves that if the associated semigroup is simplicial, then it is also Cohen--Macaulay.

\begin{proposition}\label{proposition_tetra_CM}
If the  semigroup associated to a tetrahedron is finitely generated and simplicial,
then it is also  Cohen--Macaulay.
\end{proposition}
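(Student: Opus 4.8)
The plan is to classify the possible shapes of the tetrahedron, to read off from that an explicit description --- along the extremal rays of the cone --- of which dilates $m\CaP$ contain a given point, and to feed this into condition~(3) of Theorem~\ref{C-M}. Since $\CaP$ is a simplex and $\FraP$ is simplicial, $L_{\R_\ge}(\CaP)$ has exactly three extremal rays $\tau_1,\tau_2,\tau_3$, each carrying a vertex of $\CaP$; evaluating a linear form that vanishes exactly on $\tau_i$ and is nonnegative on the cone shows $\tau_i\cap\CaP$ is the convex hull of the vertices of $\CaP$ lying on $\tau_i$. I would then note that no ray can carry three vertices, and that two rays cannot together carry all four, as either would make the four vertices coplanar, contradicting that $\CaP$ is a tetrahedron with a $3$-dimensional cone; and that if $O$ is a vertex of $\CaP$ then $\FraP=\FraC_\FraP$ (normal, hence Cohen--Macaulay). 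Excluding the last situation, exactly one of the following holds: \textbf{(a)} $\tau_i\cap\CaP=\{P_i\}$ for $i=1,2,3$, and the fourth vertex is $P_4=c_1P_1+c_2P_2+c_3P_3$ with $c_i\ge 0$ not all zero, so $\mathcal{W}=\{P_1,P_2,P_3\}$; or \textbf{(b)} $\tau_1\cap\CaP=\overline{P_1P_1'}$ is an edge with $\|P_1\|<\|P_1'\|=\rho\|P_1\|$, $\rho>1$, and $\tau_i\cap\CaP=\{P_i\}$ for $i=2,3$, so $\mathcal{W}=\{P_2,P_3\}$, $\mathcal{W}_1=\{P_1\}$, $\mathcal{W}_2=\{P_1'\}$. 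In both cases $\{P_1,P_2,P_3\}$ is a basis of $\R^3$ with $L_{\R_\ge}(\FraP)=L_{\R_\ge}(\{P_1,P_2,P_3\})$, and whenever $\tau_i\cap\CaP$ is a single point it is rational (finite generation), so the minimal generator of $\FraP$ on that ray is $g_i=h_{P_i}P_i\in\N^3$, an \emph{integer} multiple of $P_i$ --- the only fact about the generators I shall use.

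\emph{Level intervals.} For $x$ in the cone write $x=\lambda_1(x)P_1+\lambda_2(x)P_2+\lambda_3(x)P_3$ and $\Lambda(x)=\lambda_1(x)+\lambda_2(x)+\lambda_3(x)$. Computing the barycentric coordinates of $x/m$ with respect to $\CaP$, one finds that $x\in m\CaP$ iff $m$ lies in the closed interval with endpoints $\Lambda(x)$ and a second value $\ell'(x)$, where $\ell'(x)=\Lambda(x)+(1-c_1-c_2-c_3)\,\min_{c_i>0}\lambda_i(x)/c_i$ in case~(a), and $\ell'(x)=\lambda_1(x)/\rho+\lambda_2(x)+\lambda_3(x)$ in case~(b). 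In particular, for $x\in\FraC_\FraP$, one has $x\in\FraP$ exactly when this interval contains an integer.

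\emph{Conclusion.} Adding $g_i=h_{P_i}P_i$ raises $\lambda_i$ by the integer $h_{P_i}$ and fixes $\lambda_j$ for $j\neq i$, hence translates $\Lambda$ by $h_{P_i}$; and it translates $\ell'$ by the same integer provided the expression $\ell'$ is only shifted by this move --- that is, in case~(a) provided $\min_k\lambda_k(x)/c_k$ is attained at some index $\neq i$ (raising a non-minimal term leaves a minimum unchanged), and in case~(b) provided $i\in\{2,3\}$. Now assume $\FraP$ is not Cohen--Macaulay; by Theorem~\ref{C-M} there exist $a\in\FraC_\FraP\setminus\FraP$ and $i\neq j$ in $[3]$ with $a+g_i,a+g_j\in\FraP$. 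Because $i\neq j$, at least one index $\ell\in\{i,j\}$ satisfies the proviso just stated; for that $\ell$, both endpoints of the interval attached to $a+g_\ell$ are those attached to $a$ translated by the integer $h_{P_\ell}$, so the interval for $a+g_\ell$ contains an integer iff that for $a$ does. Since $a+g_\ell\in\FraP$, the former does, hence so does the latter, i.e.\ $a\in\FraP$ --- a contradiction. Therefore $\FraP$ is Cohen--Macaulay.

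I expect most of the labour to be in the first two paragraphs: the classification (disposing of the degenerate vertex configurations and identifying $g_i=h_{P_i}P_i$) and, above all, the barycentric-coordinate computation that produces the endpoints $\Lambda(x)$ and $\ell'(x)$ --- this last is the conceptual heart, since everything afterwards is the short formal manipulation of the third paragraph, which only needs that $\FraP$-membership along each ray is controlled by a single interval whose endpoints transform predictably under adding a generator. The one genuinely delicate point in that manipulation is that the minimum $\min_{c_i>0}\lambda_i(x)/c_i$ of case~(a) must be taken over a nonempty index set, which is precisely why the case of $O$ being a vertex is cleared away at the outset.
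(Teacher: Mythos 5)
Your proposal is correct, but it takes a genuinely different route from the paper. The paper reuses the Section~\ref{s4} machinery: in the case where one extremal ray meets $\CaP$ in an edge it identifies $\LL_k$ with a single prism $\widetilde{G}_{12}^k$ and invokes Lemma~\ref{prisma} to see that no gap element has two translates $P+g_i$, $P+g_j$ in $\FraP$; in the case where all three rays meet $\CaP$ in single points it subdivides the tetrahedron into three sub-tetrahedra through the ray of the fourth vertex and reduces to the first case. You instead bypass Section~\ref{s4} entirely: after the same classification of vertex configurations (which is carried out correctly --- each extremal ray meets $\CaP$ in a vertex or an edge, and the only non-degenerate distributions are your (a) and (b)), you compute the exact interval of dilation factors $\{m>0: x\in m\CaP\}$ in coordinates along the rays. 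I checked both formulas for $\ell'(x)$ by solving the barycentric system, and they are right (in case (a) the sign of $1-c_1-c_2-c_3$ determines which endpoint is which, and $C=1$ is excluded by noncoplanarity); the interval is automatically contained in $(0,\infty)$ for $x\neq O$, so ``contains an integer'' is the correct membership test. The pigeonhole step is also sound: in case (a) the argmin set of $\lambda_k/c_k$ is nonempty, so for any $i\neq j$ at least one $\ell\in\{i,j\}$ has the minimum attained away from $\ell$, and in case (b) at least one of $i,j$ lies in $\{2,3\}$; for that $\ell$ the whole interval translates by the integer $h_{P_\ell}$, giving $a+g_\ell\in\FraP\Rightarrow a\in\FraP$ and hence condition (3) of Theorem~\ref{C-M}. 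What your approach buys is a self-contained, fully explicit argument that avoids both the $\LL_k$ decomposition and the subdivision trick (whose justification in the paper is rather terse), and that would generalize to simplices in higher dimension; what the paper's approach buys is brevity, given that the prism lemmas were already established. The only thing to tighten in a final write-up is the barycentric computation itself, which you correctly flag as the main labour.
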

\begin{proof}
Let $\CaP=\H(\{A,B,C,D\})$ be a tetrahedron satisfying the hypothesis and assuming that $A,B,C,D\in \Q^3_\geq$,
and let $\FraP$ be its associated semigroup.
We set $g_1=h_AA\in\tau_1\cap \N^3$, $g_2=h_BB\in \tau_2\cap \N^3$, and $g_3=h_CC\in \tau_3\cap \N^3$.
We consider two different settings.

First case:
assume that $O$, $C$ and $D$ belong to a rational line with
$||C||<||D||$.
In this case, we have $\LL_k=\widetilde{G}_{12}^k$ for every $k\geq \kappa_0$,
and it is straightforward to
prove that
$\LL_{i}\subset (\widetilde{G}_{12}^{\kappa_0}-(\kappa_0-i)P_1)\cap L_{Q_\geq}(\CaP)$ for
all $i\in[\kappa_0].$
By Lemma \ref{prisma}, we obtain that $P+g_1,P+g_2\notin \FraP$ for all $P\in\LL\cap\N^3$ and, therefore, by Theorem \ref{C-M}, $\FraP$ is Cohen--Macaulay.

Second case: assume that $\tau_1\cap\CaP=\{A\},$ $\tau_2\cap\CaP=\{B\},$ and
$\tau_3\cap\CaP=\{C\}.$ Let $Q$ be the point obtained from the intersection of
the line containing the origin and $D$ with the triangle $\H(\{A,B,C\}).$ We divide  $\CaP$ in three tetrahedrons:  $\CaP_1=\H(\{B,C,Q,D\})$, $\CaP_2=\H(\{A,C,Q,D\})$, and $\CaP_3=\H(\{A,B,Q,D\})$.
Note that $\CaP_1$, $\CaP_2$, $\CaP_3$ satisfy the previous case, and, therefore,  for every $P\not\in \FraP$ we have $P+g_i,P+g_j\not \in \FraP_{k}\subset \FraP$ with $i,j\in [3]\setminus \{k\}$ for some $k\in[3]$.
By Theorem \ref{C-M}, $\FraP$ is Cohen--Macaulay.
\end{proof}

In the following example we give a Cohen--Macaulay convex polyhedron semigroup associated to a polyhedron that is not a tetrahedron.

\begin{example}
Let $\CaP$ be the non-tetrahedron polyhedron obtained from the convex hull of the vertex set
\[\{ (3, 3, 2), (2, 3, 1), (1, 2, 3), (3/2, 3, 9/2), (33/16, 27/8, 63/16)\}.\]
The associated convex polyhedron semigroup
is minimally generated by 6 elements. They can be computed as follows:
\begin{verbatim}
In[1]: v = [[3, 3, 2], [2, 3, 1], [1, 2, 3], [3/2, 3, 9/2],
                [33/16, 27/8, 63/16]];
In[2]: msgCHSgr(v)
Out[2]: [[1, 2, 3], [2, 3, 1], [2, 3, 2], [2, 3, 3],
                [3, 3, 2], [4, 6, 7]]
\end{verbatim}

The command {\tt isCohenMacaulay} of \cite{PROGRAMA} checks whether a simplical convex polyhedron semigroup is Cohen--Macaulay,
\begin{verbatim}
In[3]: isCohenMacaulay(v)
Out[3]: True
\end{verbatim}
In this case, the semigroup is Cohen--Macaulay. The same result is obtained using
{\tt Macaulay2} (see \cite{Macaulay2}).
\end{example}

\section{Computing a family of Gorenstein affine simplicial convex polyhedron semigroups}\label{gorenstein}

In this section, we obtain a family of Gorenstein semigroups from their associated  convex polyhedra. As in the previous sections, we assume that the semigroup $S=\langle g_1,\ldots g_t \rangle\subset \N^n$ is simplicial and $L_{\R_\ge }(S)$ is generated by $\{g_1,\ldots g_n\}.$ The set $\ap(g)=\{x\in S\mid x-g\notin S\}$ is called the Ap\'{e}ry set of $g\in S.$
In \cite[Section 4]{RosalesCM}, the following result, which characterizes  Gorenstein semigroups, is  given.

\begin{theorem}\label{Gorenstein_rosales}
For a simplicial semigroup $S,$ the following conditions are equivalent:
\begin{enumerate}
\item $S$ is Gorenstein.
\item $S$ is Cohen--Macaulay and $\cap_{i\in [n]} \ap(g_i)$ has a unique maximal element (with respect to the order defined by $S$).
\end{enumerate}
\end{theorem}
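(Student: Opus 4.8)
Write $R=\k[S]$, $B=\k[y_{g_1},\dots,y_{g_n}]$, and $\Lambda=\bigcap_{i\in[n]}\ap(g_i)$. The plan is to reduce the Gorenstein property of $R$ to a one-dimensionality statement about the socle of an Artinian reduction, and then to identify that socle with the set of maximal elements of $\Lambda$ for the order $\le_S$ given by $a\le_S b\iff b-a\in S$. Since $S\subset\N^n$ is a positive affine semigroup, $R$ is multigraded by the group $\Z S$ and is $*$-local with $*$-maximal ideal $\mathfrak m=(y_s\mid s\in S\setminus\{0\})$, so depth, Cohen--Macaulayness and Gorensteinness may be tested on $R$ in the graded sense (equivalently, on $R_{\mathfrak m}$). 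First I would record the ``parameter'' input: clearing denominators in an expression $g_j=\sum_{i\le n}q_ig_i$ with $q_i\in\Q_{\geq}$ shows that a positive integer multiple of each generator $g_j$, $j>n$, lies in $\sum_{i\le n}\N g_i$; reducing the coefficients of such generators modulo these multiples then shows $S$ is contained in a finite union of translates of $\sum_{i\le n}\N g_i$. Hence $R$ is a finite $B$-module, $B$ is a polynomial ring (the $g_i$ being $\R$-linearly independent are algebraically independent), $\dim R=n$, the sequence $y_{g_1},\dots,y_{g_n}$ is a homogeneous system of parameters, and $\Lambda$ is finite.

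\textbf{The Artinian reduction.} Let $\bar R=R/(y_{g_1},\dots,y_{g_n})$, a finite-dimensional multigraded $\k$-algebra. A descending argument shows every $s\in S$ can be written $s=w+\sum_i c_i g_i$ with $w\in\Lambda$ and $c_i\in\N$: if $s\notin\Lambda$ then $s-g_i\in S$ for some $i$, and iterating strictly decreases the value of any fixed linear functional that is positive on all the $g_i$, so the process terminates. Thus the classes $\bar y_w$, $w\in\Lambda$, span $\bar R$; since distinct $w$ lie in distinct multidegrees, and $\bar y_w\neq 0$ precisely because $w-g_i\notin S$ for all $i$ (i.e.\ $y_w\notin(y_{g_1},\dots,y_{g_n})$), they form a $\k$-basis of $\bar R$. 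The homological fact I would invoke here is that a homogeneous system of parameters in a graded ring is a regular sequence iff the ring is Cohen--Macaulay; so $S$ is Cohen--Macaulay iff $y_{g_1},\dots,y_{g_n}$ is a regular sequence iff $R$ is $B$-free with basis $\{y_w\mid w\in\Lambda\}$, in which case $\bar R$ is the Artinian reduction of $R$.

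\textbf{The socle.} For $s\in S\setminus\{0\}$ and $w\in\Lambda$, write $s+w=w'+\sum_i c_i g_i$ with $w'\in\Lambda$; then $\overline{y_{s+w}}=\bar y_{w'}$ if all $c_i=0$ and $\overline{y_{s+w}}=0$ otherwise (some $c_i>0$ and $\bar y_{g_i}=0$ in $\bar R$), so $\overline{y_{s+w}}\neq 0$ iff $s+w\in\Lambda$. Since $\bar y_s\bar y_w=\overline{y_{s+w}}$, the class $\bar y_w$ lies in $\mathrm{soc}(\bar R)=(0:_{\bar R}\mathfrak m)$ exactly when there is no $s\in S\setminus\{0\}$ with $w+s\in\Lambda$, i.e.\ when $w$ is maximal in $(\Lambda,\le_S)$. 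As $\mathrm{soc}(\bar R)$ is a multigraded submodule it is the span of its homogeneous elements, so $\mathrm{soc}(\bar R)=\bigoplus_{w\text{ maximal in }\Lambda}\k\,\bar y_w$, and $\dim_\k\mathrm{soc}(\bar R)$ equals the number of maximal elements of $\Lambda$.

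\textbf{Conclusion and main difficulty.} If $S$ is Gorenstein then $R$ is Cohen--Macaulay (Gorenstein implies Cohen--Macaulay), so $y_{g_1},\dots,y_{g_n}$ is a regular sequence, $\bar R$ is the Artinian reduction, and $\bar R$ is again Gorenstein (Gorensteinness is preserved modulo a regular sequence); an Artinian Gorenstein ring has simple socle, so $\Lambda$ has a unique maximal element, giving (2). Conversely, if $S$ is Cohen--Macaulay then $\bar R$ is Artinian, and the hypothesis that $\Lambda$ has a unique maximal element gives $\dim_\k\mathrm{soc}(\bar R)=1$, so $\bar R$ is Gorenstein; Gorensteinness then lifts back along the regular sequence $y_{g_1},\dots,y_{g_n}$ to $R$, giving (1). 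The main obstacle is the combinatorial dictionary of the middle two steps — establishing that $y_{g_1},\dots,y_{g_n}$ is a homogeneous system of parameters, that $\Lambda$ indexes a $\k$-basis of the Artinian reduction, and that its socle is spanned exactly by the $\le_S$-maximal elements of $\Lambda$; once these are in place, the equivalence is only the standard commutative-algebra facts that a parameter sequence is regular precisely when the ring is Cohen--Macaulay and that Gorensteinness passes both ways along a regular sequence (equivalently, an Artinian local ring is Gorenstein iff its socle is one-dimensional).
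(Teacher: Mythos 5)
The paper does not actually prove this theorem: it is quoted verbatim from \cite[Section 4]{RosalesCM}, so there is no in-paper argument to compare against. Your proof is a correct, self-contained derivation by the standard commutative-algebra route: Noether normalization $B=\k[y_{g_1},\dots,y_{g_n}]$ (your clearing-of-denominators argument correctly gives finiteness of $R=\k[S]$ over $B$, hence that the $y_{g_i}$ form a homogeneous system of parameters and that $\Lambda=\cap_{i\in[n]}\ap(g_i)$ is finite), identification of the multihomogeneous $\k$-basis of $\bar R=R/(y_{g_1},\dots,y_{g_n})$ with $\Lambda$ (correct, since the ideal $(y_{g_1},\dots,y_{g_n})R$ is the span of the $y_s$ with $s\in\cup_i(g_i+S)$, and distinct multidegrees give independence), identification of the socle of $\bar R$ with the span of the $\bar y_w$ for $w$ maximal in $(\Lambda,\le_S)$ (correct: $\bar y_s\bar y_w\ne 0$ iff $s+w\in\Lambda$, which for $s\in S\setminus\{0\}$ is exactly the negation of maximality), and the two standard facts that an h.s.o.p.\ is a regular sequence iff the graded ring is Cohen--Macaulay and that an Artinian (graded) local ring is Gorenstein iff its socle is one-dimensional, with Gorensteinness passing both ways along a regular sequence. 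I see no gap. The difference from the cited source is one of style rather than substance: Rosales and Garc\'{\i}a-S\'anchez work combinatorially with Ap\'ery sets on top of the Goto--Suzuki--Watanabe criterion (the same criterion restated as Theorem \ref{C-M}(2) in this paper), keeping the ring theory implicit, whereas you make the homological content (freeness over $B$, Artinian reduction, socle) explicit at the cost of invoking the graded-local machinery. Either way the statement is established; yours has the advantage of showing directly that the number of maximal elements of $\Lambda$ is the Cohen--Macaulay type of $S$, which is more than the theorem asks for.
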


From this, we can obtain Gorenstein semigroups by looking at the intersection
$\cap_{i\in [n]} \ap(g_i).$
Consider the convex polyhedron $\CaP$ (tetrahedron) equal to the convex hull of
\begin{equation}\label{f1}
\{(4,0,0),(4+2k,0,0),(4+k,k,0),(4+k,0,1)\},
\end{equation}
with $k$ an integer greater than or equal to 2. Denote by
$\FraP$ the affine semigroup associated to $\CaP$.
We have that $\FraP$ is simplicial and that $g_1=(4,0,0)$, $g_2=(4+k,k,0),$ and $g_3=(4+k,0,1)$ are a set of generators of
the cone associated to $\FraP.$ Note that,  by
Proposition \ref{proposition_tetra_CM},  this affine semigroup is Cohen--Macaulay.

\begin{lemma}\label{ap_n3}
Let $\FraP$  be the tetrahedron with the set of vertices (\ref{f1}).
For every $P=(p_1,p_2,p_3)\in\FraP$ with $p_3>0,$ the element $P-g_3$ belongs to $\FraP.$
\end{lemma}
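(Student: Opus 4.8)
The plan is to show that any point $P=(p_1,p_2,p_3)\in\FraP$ with $p_3>0$ already lies in $\cup_{j\in\N}j\CaP$ at a ``level'' $j$ whose associated slice $j\CaP$ is tall enough in the third coordinate that one can descend by one copy of $g_3=(4+k,0,1)$ and stay inside some $j'\CaP$. First I would recall the membership criterion from Section \ref{s3}: $P\in\FraP$ iff $\tau_P\cap\CaP=\overline{AB}$ and the interval $[\,\|P\|/\|B\|,\ \|P\|/\|A\|\,]$ contains an integer $j$; equivalently, $P\in j\CaP$ for that $j$, i.e. $P/j\in\CaP$. So fix such a $j$ with $P\in j\CaP$, and note $j\ge 1$ since $p_3>0$ forces $P\ne O$ (and $P\in\CaP$ itself would be the case $j=1$).

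Next I would exploit the explicit geometry of the tetrahedron in (\ref{f1}). Its vertices are $(4,0,0)$, $(4+2k,0,0)$, $(4+k,k,0)$, $(4+k,0,1)$; three of the four facets are the coordinate-type planes $z=0$ (the triangle $(4,0,0),(4+2k,0,0),(4+k,k,0)$), $y=0$, and the ``slanted'' facet through $(4,0,0),(4+2k,0,0),(4+k,0,1)$ — wait, that last one is also $y=0$; the genuinely slanted facets are the two through the edge $(4,0,0)$--$(4+2k,0,0)$? No: the four facets are $\{z=0\}$, $\{y=0\}$, the plane through $(4,0,0),(4+k,k,0),(4+k,0,1)$, and the plane through $(4+2k,0,0),(4+k,k,0),(4+k,0,1)$. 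The key point is that $g_3=(4+k,0,1)$ is itself a vertex of $\CaP$, so $g_3\in\CaP\subset\cup_{j}j\CaP$; more usefully, $j g_3\in j\CaP$ for every $j$. The strategy is: from $P\in j\CaP$ with $p_3\ge 1$, I want $P-g_3\in (j-1)\CaP$ or $P-g_3\in j\CaP$. Since $P\in j\CaP$, write $P=j\,X$ with $X\in\CaP$; I would compute the segment $\tau_{P-g_3}\cap\CaP$ and check directly that the scaling interval for $P-g_3$ still contains an integer. Concretely, because the two slanted facets are ``vertical'' over the $z=0$ triangle only near $z=0$, subtracting $g_3$ moves $P$ parallel to the edge from the centroid-ish direction toward the base; a direct check that the inequalities defining the four facets, evaluated at $(P-g_3)/j$ or $(P-g_3)/(j-1)$, are all satisfied should close it.

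The main obstacle I anticipate is the bookkeeping at the ``seam'': when $P$ is low (say $j$ is exactly the smallest level with $P\in j\CaP$ and $p_3$ is as small as possible, $p_3=1$), one must verify $P-g_3\in (j-1)\CaP$ rather than $j\CaP$, and this requires knowing that the third-coordinate ``thickness'' of $(j-1)\CaP$ over the relevant fiber is at least $p_3-1=0$, which is automatic, together with the first-two-coordinate containment. I would handle this by noting that $P-g_3$ has third coordinate $p_3-1\ge 0$, and if $p_3-1=0$ then $P-g_3$ lies in the plane $z=0$, where $\FraP$ restricted to $z=0$ is simply the $2$-dimensional polytope semigroup of the triangle $\H(\{(4,0,0),(4+2k,0,0),(4+k,k,0)\})$, whose membership I can check by the same criterion (and here $P-g_3$ inherits membership from $P\in j\CaP$ projected to $z=0$, since the projection of $\CaP$ onto $z=0$ is exactly that triangle and $g_3$ projects to $(4+k,0)$, an interior/edge point of it). For $p_3-1\ge 1$ an induction on $p_3$ reduces everything to the base case, so the whole proof is: the criterion of Section \ref{s3}, plus one explicit inequality check exploiting that $g_3$ is a vertex of $\CaP$ lying on the facet $y=0$ and that $\CaP$'s projection along $g_3$'s direction is well-behaved.
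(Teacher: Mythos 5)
Your high-level strategy --- fix $j$ with $P\in j\CaP$ and show that $P-g_3$ lies in $(j-1)\CaP$ --- is sound and is essentially the route the paper takes, but as written the proposal has a genuine gap: the decisive step is deferred to ``a direct check \dots should close it'' and is never performed, and the one place where you do supply a concrete substitute argument it is invalid. For $p_3=1$ you argue by projecting onto $\{z=0\}$: the projection of $P$ lies in $jT'$ (with $T'$ the projected base triangle) and $g_3$ projects to a point of $T'$, hence, you claim, $P-g_3$ projects into $(j-1)T'$. That inference is false for a polytope not containing the origin: $jT'$ minus a point of $T'$ is \emph{not} contained in $(j-1)T'$ (e.g.\ $(4j,0)\in jT'$, but $(4j,0)-(4+k,0)$ has first coordinate $4j-4-k<4(j-1)$, so it is outside $(j-1)T'$). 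The projection of $j\CaP$ onto $\{z=0\}$ is all of $jT'$, whereas the set of $(x,y)$ with $(x,y,1)\in j\CaP$ is the much smaller set $(4+k,0)+(j-1)T'$; your argument conflates the two. The proposed induction on $p_3$ does not repair this: as stated the inductive step would require knowing $P-g_3\in\FraP$, which is the assertion being proved.

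What actually makes the lemma work is the slice identity coming from the fact that $\CaP$ is the join of the apex $g_3=(4+k,0,1)$ with the triangle $T=\H(\{(4,0,0),(4+2k,0,0),(4+k,k,0)\})$ in $\{z=0\}$: for $0\le p_3\le j$ one has $j\CaP\cap\{z=p_3\}=p_3g_3+(j-p_3)T$. Hence $P\in j\CaP$ with third coordinate $p_3\ge 1$ gives $P-p_3g_3\in(j-p_3)T\cap\N^3\subset\FraP$, and $P-g_3=(P-p_3g_3)+(p_3-1)g_3$ is a sum of two elements of $\FraP$ (the degenerate cases $P=jg_3$ and $j=1$ being checked directly). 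This identity is exactly the computation the paper carries out, and it is also what your ``check of the facet inequalities at $(P-g_3)/(j-1)$'' would reduce to; the proof is not complete until that computation is actually done.
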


\begin{proof}
Given $P\in \FraP,$ there exists $i\in \N$ such that $P\in i\CaP.$ Assume that $i>1,$ if not, $P=(4+k,0,1)$ and $P-g_3=(0,0,0)\in\FraP.$ So, we can write $P$ as
$$P=\lambda _1 (4i,0,0)+\lambda_2 ((4+2k)i,0,0)+\lambda_3((4+k)i,ki,0)+\lambda_4((4+k)i,0,i)$$
where $\sum_{j\in[4]}\lambda_j=1$ and $\lambda_1,\ldots ,\lambda_4\in [0,1].$ Note that $p_3\le i,$ and then $i=p_3+t$ with $t\in \N.$ We also assume that $p_3\in[i-1],$ otherwise, $P=i(4+k,0,1)$ and $P-g_3=(i-1)(4+k,0,1)\in\FraP.$ Since $P=(p_1,p_2,p_3),$ the element $P$ belongs to $i\CaP\cap \{z=p_3\}.$ After some basic computations the reader can check that this set is the convex polygon
$$\H\big(p_3(4+k,0,1)+ t\{(4,0,0),(4+2k,0,0),(4+k,k,0) \} \big).
$$
Hence, there exist $\nu_1,\nu_2,\nu_3\in [0,1]$ with $\nu_1+\nu_2+\nu_3=1$ satisfying
$$
\begin{multlined}
P= \nu_1 \big(t(4,0,0)+p_3(4+k,0,1)\big) + \nu_2 \big(t(4+2k,0,0)+p_3(4+k,0,1)\big)\\
+\nu_3 \big(t(4+k,k,0)+p_3(4+k,0,1)\big)\\
= \underbrace{t[\nu_1(4,0,0)+\nu_2 (4+2k,0,0) +\nu_3  (4+k,k,0)]}_{\in t\CaP\cap\N^3}+\underbrace{p_3(4+k,0,1)}_{\in p_3\CaP\cap\N^3}.
\end{multlined}
$$
We can conclude that $P-g_3\in\FraP.$
\end{proof}

In \cite{GV_CM_Go}, it is proved that the intersection $\ap (g_1) \cap \ap (g_2) \cap \{z=0\}$ is the set shown in Table \ref{interseccion_apery}.
Besides, in \cite{GV_CM_Go} it is also proved that $(10+k,k-1,0)$ is the unique maximal element of $\ap (g_1) \cap \ap (g_2) \cap \{z=0\}$, and
therefore $\FraP$ is Gorenstein by Theorem \ref{Gorenstein_rosales}.
{\small
	\begin{table}[h]
		\begin{center}
			\begin{tabular}{|ccc|}
				\hline
				$\ap(g_1)\cap \ap(g_2)\cap \{y=0,z=0\}$ & = & $\{
				(0,0,0),(5,0,0),(6,0,0), (7,0,0)\}$ \\ \hline
				$\ap(g_1)\cap \ap(g_2)\cap \{{y=1,z=0\}}$ & = & $\{(5,1,0),(6,1,0),(7,1,0),(8,1,0)\}$ \\ \hline
				\vdots & \vdots & \vdots \\ \hline
				\multirow{2}{*}{$\ap(g_1)\cap \ap(g_2)\cap \{y=k-2,z=0\}$}& = & $\{(2+k,k-2,0),(3+k,k-2,0) ,$\\
				& & $(4+k,k-2,0) , (5+k,k-2,0) \}$\\ \hline
				\multirow{2}{*}{$\ap(g_1)\cap \ap(g_2)\cap \{y=k-1,z=0\}$} & = & $\{(3+k,k-1,0),(4+k,k-1,0),$\\
				& & $(5+k,k-1,0),(10+k,k-1,0) \}$\\ \hline
				$\ap(g_1)\cap \ap(g_2)\cap \{y\ge k,z=0\}$ & = & $\emptyset$\\ \hline
			\end{tabular}
			\caption{$\ap(g_1)\cap \ap(g_2)\cap \{z=0\}$.}\label{interseccion_apery}
		\end{center}
\end{table}}

\begin{corollary}\label{Go_thetrahedrom}
For every $k\in\N\setminus\{0,1\}$
the affine semigroup $\FraP$ associated to the convex hull of the vertex set $\{(4,0,0),(4+2k,0,0),(4+k,k,0),(4+k,0,1)\}$ is Gorenstein.
\end{corollary}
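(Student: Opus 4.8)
The plan is to assemble the corollary directly from the work already done in this section, treating it essentially as a bookkeeping statement. First I would observe that the corollary is nothing more than the combination of three facts established earlier: that $\FraP$ is simplicial with cone generated by $g_1=(4,0,0)$, $g_2=(4+k,k,0)$, $g_3=(4+k,0,1)$ (noted immediately after display~(\ref{f1})); that $\FraP$ is Cohen--Macaulay (by Proposition~\ref{proposition_tetra_CM}, since it is the semigroup of a tetrahedron and is finitely generated and simplicial); and that the intersection $\cap_{i\in[3]}\ap(g_i)$ has a unique maximal element with respect to the order defined by $\FraP$. Given these, Theorem~\ref{Gorenstein_rosales} yields the Gorenstein property at once.

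The only point that needs genuine argument is the computation of $\cap_{i\in[3]}\ap(g_i)$ and the identification of its unique maximal element, and here the key reduction is Lemma~\ref{ap_n3}. That lemma says that every $P=(p_1,p_2,p_3)\in\FraP$ with $p_3>0$ satisfies $P-g_3\in\FraP$; equivalently, $\ap(g_3)\subseteq\{z=0\}$. Since elements of $\FraP$ have nonnegative last coordinate, this forces
$$\cap_{i\in[3]}\ap(g_i) \;=\; \ap(g_1)\cap\ap(g_2)\cap\ap(g_3)\;=\;\ap(g_1)\cap\ap(g_2)\cap\{z=0\}.$$
(One should also check the trivial inclusion: any $P$ with $z=0$ automatically has $P-g_3\notin\FraP$ because $g_3$ has positive $z$-coordinate, so the plane $\{z=0\}$ part of $\ap(g_1)\cap\ap(g_2)$ lies in $\ap(g_3)$ as well.) Now I would invoke the result of \cite{GV_CM_Go}, recorded in Table~\ref{interseccion_apery}, which both describes $\ap(g_1)\cap\ap(g_2)\cap\{z=0\}$ explicitly and shows that $(10+k,k-1,0)$ is its unique maximal element under the $\FraP$-order. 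Combining this with the previous equality, $(10+k,k-1,0)$ is the unique maximal element of $\cap_{i\in[3]}\ap(g_i)$.

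With Cohen--Macaulayness and the uniqueness of the maximal element of $\cap_{i\in[3]}\ap(g_i)$ both in hand, condition~(2) of Theorem~\ref{Gorenstein_rosales} is satisfied, hence $\FraP$ is Gorenstein; since $k\in\N\setminus\{0,1\}$ was arbitrary, the whole family is Gorenstein. I do not anticipate a genuine obstacle here: the substantive content is already carried by Proposition~\ref{proposition_tetra_CM}, Lemma~\ref{ap_n3}, and the cited computation from \cite{GV_CM_Go}. The one place to be careful is making the step $\cap_{i\in[3]}\ap(g_i)=\ap(g_1)\cap\ap(g_2)\cap\{z=0\}$ airtight, i.e.\ checking both inclusions rather than only the one supplied by Lemma~\ref{ap_n3}; everything else is a direct citation of Theorem~\ref{Gorenstein_rosales}.
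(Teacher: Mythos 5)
Your proposal is correct and follows essentially the same route as the paper, which presents the corollary as a summary of the immediately preceding discussion: Cohen--Macaulayness from Proposition~\ref{proposition_tetra_CM}, the reduction of $\cap_{i\in[3]}\ap(g_i)$ to $\ap(g_1)\cap\ap(g_2)\cap\{z=0\}$ via Lemma~\ref{ap_n3}, the unique maximal element $(10+k,k-1,0)$ from \cite{GV_CM_Go}, and then Theorem~\ref{Gorenstein_rosales}. Your explicit check of both inclusions in the Ap\'ery-set identity is a small gain in rigor over the paper's implicit treatment, but not a different argument.
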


\begin{example}
Let $\CaP$ be the convex hull of $\{(4,0,0),(7,3,0),(10,0,0),(7,0,1)\}$ and $\FraP$ be its associated affine convex polyhedron semigroup.
It is straightforward to check that $\FraP$ is simplicial and that $\CaP$ is a
tetrahedron. So, $\FraP$ is Cohen--Macaulay, and,
by Corollary \ref{Go_thetrahedrom}, it is also Gorenstein.
The minimal system of generators of $\FraP$ can be computed with the command
{\tt msgCHSgr} of \cite{PROGRAMA}.
With this command, we obtain
$$
\begin{multlined}
\{(4,0,0),(7,3,0),(7,0,1),(6,0,0),(7,0,0),(5,0,0),\\(6,1,0),
 (8,1,0),(7,1,0),(5,1,0),(6,2,0),(8,2,0),(7,2,0)\}.
\end{multlined}
$$
Using the program {\tt Macaulay2} (see \cite{Macaulay2}) we also obtain that $\FraP$ is
Gorenstein.
\end{example}

\section{Buchsbaum affine simplicial convex polyhedron semigroups}\label{buchsbaum}
In this section, we characterize the Buchsbaum affine simplicial convex polyhedron semigroups. So, we assume the semigroups appearing in this section are
simplicial and finitely generated.

Given $S\subset \N^n$, a simplicial semigroup minimally generated by $\{g_1,\ldots ,g_t\},$ we write
$\overline{S}$ for the semigroup $\{a\in\N^{n}\mid a+g_i\in S,\, \forall i=1,\ldots ,t\}$. Trivially, $S\subset \overline{S}\subset \FraC_S.$ The following result gives a characterization of Buchsbaum rings in terms of their associated semigroups, when the semigroup is simplicial.

\begin{theorem}\cite[Theorem 5]{RosalesBuchs}\label{RosalesBuchs}
The following conditions are equivalent:
\begin{enumerate}
\item $S$  is Buchsbaum.
\item $\overline{S}$ is Cohen--Macaulay.
\end{enumerate}
\end{theorem}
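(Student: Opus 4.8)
The plan is to turn the Buchsbaum property of $\k[S]$ into a cancellation condition on $S$ and to recognize that condition, through Theorem~\ref{C-M}, as the Cohen--Macaulayness of $\overline{S}$. Write $R=\k[S]$ and let $A=\k[x^{g_1},\dots,x^{g_n}]\subseteq R$ be the polynomial subring on the cone generators; since $S$ is simplicial, $R$ is module-finite over $A$, so $x^{g_1},\dots,x^{g_n}$ is a homogeneous system of parameters for the irrelevant maximal ideal $\mathfrak m$, and, by definition, $S$ is Buchsbaum exactly when every system of parameters of $R$ is a weak $R$-sequence. I would first observe that $\overline{S}$ is again a simplicial affine semigroup to which Theorem~\ref{C-M} applies and that $R\hookrightarrow\k[\overline{S}]$ is a finite birational extension: from $S\subseteq\overline{S}\subseteq\FraC_S$ one gets $L_{\R_\ge}(\overline{S})=L_{\R_\ge}(S)$ and $\FraC_{\overline{S}}=\FraC_S$; each $g_i$ lies in $\overline{S}$ because $g_i+g_k\in S$ for every generator $g_k$; and $\overline{S}\subseteq\Z S$ because $a+g_i\in S$ forces $a\in\Z S$. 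The observation that makes the correspondence work is that $\mathfrak m\,(\k[\overline{S}]/R)=0$ for free: if $a\in\overline{S}\setminus S$ then $a+g_k\in S$ for all generators $g_k$, hence $a+s\in S$ for every $s\in S\setminus\{0\}$, so $\k[\overline{S}]/R=\bigoplus_{a\in\overline{S}\setminus S}\k\,x^a$ is annihilated by $\mathfrak m$ (and it has finite length, as $\k[\overline{S}]$ lies in and is module-finite over the normalization of $R$). By Theorem~\ref{C-M}(3) applied to $\overline{S}$, with $g_1,\dots,g_n$ as its cone generators, the target statement to be matched is: $\overline{S}$ is Cohen--Macaulay if and only if no $a\in\FraC_S\setminus\overline{S}$ admits two distinct indices $i,j\in[n]$ with $a+g_i,a+g_j\in\overline{S}$.

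To reach this, I would translate ``$S$ is Buchsbaum'' into a statement about $S$. By the graded Buchsbaum theory of St\"uckrad--Vogel, together with the fact that $R$ is module-finite over the polynomial ring $A$, it suffices to test the weak-sequence condition on the monomial system of parameters $x^{g_1},\dots,x^{g_n}$ and on its reorderings (each of which is again a system of parameters). Thanks to the $\Z^n$-grading the colon ideals $(x^{g_{i_1}},\dots,x^{g_{i_{k-1}}}):_R x^{g_{i_k}}$ are monomial, and unwinding the inclusions $\mathfrak m\bigl((x^{g_{i_1}},\dots,x^{g_{i_{k-1}}}):_R x^{g_{i_k}}\bigr)\subseteq(x^{g_{i_1}},\dots,x^{g_{i_{k-1}}})$ over all orderings of $\{g_1,\dots,g_n\}$, and using $\mathfrak m\,(\k[\overline{S}]/R)=0$, turns them into precisely: no $a\in\FraC_S\setminus\overline{S}$ has two distinct translates $a+g_i,a+g_j$ (with $i,j\in[n]$) in $\overline{S}$. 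That is the target condition, so ``$S$ Buchsbaum $\iff\overline{S}$ Cohen--Macaulay'' follows.

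As a consistency check of the bookkeeping, the short exact sequence $0\to R\to\k[\overline{S}]\to\k[\overline{S}]/R\to 0$ together with $H^j_{\mathfrak m}(\k[\overline{S}]/R)=0$ for $j>0$ gives $H^i_{\mathfrak m}(R)\cong H^i_{\mathfrak m}(\k[\overline{S}])$ for $i\ge 2$ and $0\to\k[\overline{S}]/R\to H^1_{\mathfrak m}(R)\to H^1_{\mathfrak m}(\k[\overline{S}])\to 0$; hence $\k[\overline{S}]$ Cohen--Macaulay is equivalent to ``$H^i_{\mathfrak m}(R)=0$ for $1<i<n$ and $H^1_{\mathfrak m}(R)\cong\k[\overline{S}]/R$ is killed by $\mathfrak m$'', which is the cohomological shadow of the combinatorial computation above and, by the St\"uckrad--Vogel criterion, implies that $R$ is Buchsbaum (the canonical maps from Koszul to local cohomology below the top degree being onto --- trivially so except in degree $1$, where surjectivity follows from $\mathfrak m\,H^1_{\mathfrak m}(R)=0$). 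So every system of parameters of $R$ is indeed a weak sequence.

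The hard part is the middle step --- passing from ``every system of parameters is a weak sequence'' to the monomial one and its reorderings, and then unwinding the colon ideals. The reduction is not formal: it uses essentially that $S$ is simplicial, so that $R$ is module-finite over $A$ and, via local duality over $A$, the modules $H^i_{\mathfrak m}(R)$ can be computed from the monomial grading; this is what forces $H^i_{\mathfrak m}(R)=0$ for $1<i<n$ once $R$ is quasi-Buchsbaum (a nonzero such module would have to propagate along a positive-dimensional face of the cone and hence be of infinite length, contradicting the finite length imposed by quasi-Buchsbaumness), pinning the Buchsbaum obstruction down to cohomological degree $1$ and thus to $\k[\overline{S}]/R$. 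Carrying out that reduction and matching the resulting semigroup condition with Theorem~\ref{C-M}(3) for $\overline{S}$ is the technical core of the argument.
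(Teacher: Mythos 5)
This theorem is quoted verbatim from \cite[Theorem 5]{RosalesBuchs}; the paper offers no proof of its own, so your attempt can only be measured against the cited result itself. Your architecture is reasonable, and one half is essentially complete: the implication ``$\overline{S}$ Cohen--Macaulay $\Rightarrow$ $S$ Buchsbaum'' does follow from your exact sequence $0\to R\to\k[\overline S]\to\k[\overline S]/R\to 0$, the (correct) observation that $\mathfrak m$ annihilates the finite-length quotient, and the Schenzel/St\"uckrad--Vogel criterion that a module whose local cohomology below the top degree is concentrated in a single index and killed by $\mathfrak m$ is Buchsbaum.

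The converse contains a genuine gap exactly where you locate ``the technical core''. First, Buchsbaumness cannot in general be tested on one system of parameters and its reorderings (quasi-Buchsbaum does not imply Buchsbaum), and your argument that the simplicial structure forces $H^i_{\mathfrak m}(R)=0$ for $1<i<n$ once $R$ is quasi-Buchsbaum is a heuristic (``would have to propagate along a face''), not a proof; this is precisely the nontrivial content of the cited theorem. Second, the ``unwinding of the colon ideals'' is asserted rather than carried out, and the naive computation does not land on the target: the length-two weak-sequence condition $\mathfrak m\bigl((x^{g_i}):_Rx^{g_j}\bigr)\subseteq(x^{g_i})$ translates to ``if $a+g_i\in S$ and $a+g_j\in S$ then $a\in\overline S$,'' whereas Theorem \ref{C-M}(3) applied to $\overline S$ concerns the hypothesis ``$a+g_i,a+g_j\in\overline S$''. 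Bridging $S$-membership to $\overline S$-membership of the translates (using the higher colon conditions), and checking that Theorem \ref{C-M} may legitimately be applied to $\overline S$ with the elements $g_i$, which need not be minimal generators of $\overline S$, is additional combinatorial work that is missing. As it stands, the proposal is a plausible plan, but not a proof, of the implication ``$S$ Buchsbaum $\Rightarrow$ $\overline S$ Cohen--Macaulay''; for a complete argument one should follow the cited source.
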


To check whether a simplicial  convex polyhedron semigroup
$\FraP\subset \N^3$ is Buchsbaum, we study the Cohen--Macaulayness of $\overline{\FraP}$.
Note that if for any $i\in [3]$  the set $\tau_i\cap \CaP$ has
only one point, then $\tau_i\cap \FraP= \tau_i\cap \overline{\FraP}$ and it is generated by one element. We assume that this element is $g_i.$
Otherwise, $\tau_i\cap \CaP$ is a segment, and we can obtain the generators
of $\tau_i\cap \overline{\FraP}$ by checking the points in the finite set
$\tau_i\setminus \FraP.$

As in Section \ref{sectionCM}, we consider different cases, depending on the intersections of $\tau_i\cap \CaP$ for $i\in [3].$
The following result characterize the easiest cases.

\begin{lemma}
Let $\CaP\subset \R^3_\ge$ be a convex polyhedron such that
$\mathcal{W}=\emptyset,$ or $\mathcal{W}=\{P_1\}$ and $\mathcal{W}_1=\{P_2,P_3\}$.
Then
$\FraP$ is Buchsbaum if and only if $\overline{\FraP}=\FraC_\FraP.$
\end{lemma}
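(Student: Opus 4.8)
The statement to prove is: for a convex polyhedron $\CaP\subset\R^3_\ge$ with $\mathcal{W}=\{P_1\}$ and $\mathcal{W}_1=\{P_2,P_3\}$ (I read $\mathcal{W}'_1$ as $\mathcal{W}_1$), the semigroup $\FraP$ is Buchsbaum if and only if $\overline{\FraP}=\FraC_\FraP$. As in the two preceding lemmas, the backbone is Theorem \ref{RosalesBuchs}: $\FraP$ is Buchsbaum iff $\overline{\FraP}$ is Cohen--Macaulay. So the real task is to show that, under this configuration of the rays, $\overline{\FraP}$ is Cohen--Macaulay if and only if $\overline{\FraP}=\FraC_\FraP$; the "if" direction is immediate since $\FraC_\FraP = \FraC_{\overline{\FraP}}$ is a normal semigroup (Hochster's theorem), so only the "only if" direction needs work.

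First I would set up notation: $\tau_1\cap\CaP=\{P_1\}$ is a single point while $\tau_2\cap\CaP$ and $\tau_3\cap\CaP$ are segments, so $\tau_2\setminus\FraP$ and $\tau_3\setminus\FraP$ are finite and one chooses generators $g_2,g_3$ of $\tau_2\cap\overline{\FraP}$, $\tau_3\cap\overline{\FraP}$ as described in the paragraph before the lemma, together with $g_1$ the generator of $\tau_1\cap\FraP=\tau_1\cap\overline{\FraP}$. The key observation is that $\overline{\FraP}$ is itself a (simplicial, finitely generated) convex polyhedron semigroup whose associated polyhedron $\overline{\CaP}$ has $\overline{\CaP}\cap\tau_1$ a single point and $\overline{\CaP}\cap\tau_i$ a segment for $i=2,3$ — that is, $\overline{\FraP}$ falls exactly into the hypothesis of Lemma \ref{1pto_2segmentosCM}. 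Establishing that $\overline{\FraP}$ genuinely is of this form (in particular that its cone is still $L_{\R_\ge}(\CaP)$ and that $\mathcal{W}$ for $\overline{\CaP}$ is again a single point on $\tau_1$) is the step that needs a little care: one must argue that adjoining the finitely many "missing" lattice points along $\tau_2,\tau_3$ (and the resulting lattice points) does not change the combinatorial type of the relevant intersections — the extremal ray $\tau_1$ still meets the body in one point because $\FraP$ and $\overline{\FraP}$ have the same normalization $\FraC_\FraP$, and an element of $\tau_1$ strictly beyond $P_1$ toward the origin cannot enter $\overline{\FraP}$ without entering $\FraC_\FraP$.

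Once $\overline{\FraP}$ is recognized as a semigroup satisfying the hypotheses of Lemma \ref{1pto_2segmentosCM}, that lemma gives directly: $\overline{\FraP}$ is Cohen--Macaulay iff $\FraC_{\overline{\FraP}}=\overline{\FraP}$, i.e.\ iff $\overline{\FraP}=\FraC_\FraP$ (using $\FraC_{\overline{\FraP}}=\FraC_\FraP$ since both have cone $L_{\R_\ge}(\CaP)$). Combining with Theorem \ref{RosalesBuchs} finishes the proof. I expect the main obstacle to be precisely the verification that $\overline{\FraP}$ is again a convex polyhedron semigroup of the required shape rather than merely an abstract simplicial affine semigroup — one has to be sure Lemma \ref{1pto_2segmentosCM} is applicable; an alternative, if a direct geometric description of $\overline{\CaP}$ is awkward, is to re-run the argument of Lemma \ref{1pto_2segmentosCM} abstractly for $\overline{\FraP}$, using only that $\overline{\FraP}$ is simplicial, finitely generated, has $\FraC_{\overline{\FraP}}\setminus\overline{\FraP}$ describable via the $\LL$-decomposition of Section \ref{s4} applied to $\CaP$, and that the $G_1^k$ translate by $P_1$ so that $(\FraC_{\overline{\FraP}}\setminus\overline{\FraP})\cap\{P+\lambda_2 g_2+\lambda_3 g_3\}$ is finite — then invoke Theorem \ref{C-M}(3) exactly as in Lemma \ref{1pto_2segmentosCM} to produce, for any $P\in\FraC_{\overline{\FraP}}\setminus\overline{\FraP}$, a witness $P'$ with $P'+g_2,P'+g_3\in\overline{\FraP}$, contradicting Cohen--Macaulayness unless the set is empty.
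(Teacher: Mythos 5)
Your fallback route --- re-running the argument of Lemma \ref{1pto_2segmentosCM} on $\overline{\FraP}$ itself, via Theorem \ref{RosalesBuchs}, the inclusion $\FraC_\FraP\setminus\overline{\FraP}\subset\FraC_\FraP\setminus\FraP$, and the translation structure of the sets $G_1^k$ to produce a witness $P'\notin\overline{\FraP}$ with $P'+g_2,P'+g_3\in\overline{\FraP}$, then Theorem \ref{C-M}(3) --- is exactly what the paper does, so the proposal is sound. Your headline plan, by contrast, rests on an unverified premise: it is not clear (and the paper never claims) that $\overline{\FraP}$ is again a convex polyhedron semigroup, so Lemma \ref{1pto_2segmentosCM} cannot be invoked as a black box; you correctly flag this risk yourself, and the paper avoids it entirely by arguing ``by similar reasoning'' rather than by citation. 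The one ingredient the paper makes explicit that you leave implicit is the observation that $\Tint\bigl(\cup_{j\in\N}G_1^{\kappa_0+j}\bigr)\cap\overline{\FraP}=\emptyset$ (a point $P$ in this interior has $P+g_1$ again in such an interior by Lemma \ref{deltaP}, hence $P\notin\overline{\FraP}$), from which $\overline{\FraP}\setminus\FraP$ is finite and contained in $\H(\{O,\kappa_0P_1,\kappa_0P_2,\kappa_0P_3\})$; this is what guarantees that $\overline{\FraP}$ is a finitely generated simplicial semigroup to which the Cohen--Macaulay machinery applies. With that detail added, your argument matches the paper's.
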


\begin{proof}
By Theorem \ref{RosalesBuchs}, $\FraP$ is Buchsbaum if and only if $\overline{\FraP}$ is Cohen--Macaulay. If we consider $\mathcal{W}=\emptyset,$ by using a similar reasoning to the proof of Lemma \ref{1pto_2segmentosCM}
, it can be shown that $\overline{\FraP}$ is Cohen--Macaulay if and only if $\overline{\FraP}=\FraC_\FraP.$

For the case $\mathcal{W}=\{P_1\}$ and $\mathcal{W}_1=\{P_2,P_3\}$, note that the intersection
$\Tint (\cup _{j\in \N}G_{1}^{\kappa_0+j})\cap \overline{\FraP}$ is empty. Thus,
$\overline{\FraP}\setminus\FraP$ is a finite set included in
$\H(\{O,\kappa_0P_1,\kappa_0P_2,\kappa_0P_3\}).$ So, the lemma can be proved analogously to Lemma
\ref{1pto_2segmentosCM}.
\end{proof}

As in Theorem \ref{theorem_CM_3D}, we focus on the cases:
\begin{itemize}
\item
$\mathcal{W}=\{P_1,P_2\}$ and $\mathcal{W}_1=\{P_3\}$.
\item
$\mathcal{W}=\{P_1,P_2,P_3\}$.
\end{itemize}
Note that for all integers $j\ge \kappa_0$ and for all points $P$ belonging to $$(G_{1}^j\cup  G_{2}^j \cup G_{3}^j\cup \widetilde{G}_{12}^j\cup \widetilde{G}_{23}^j
\cup \widetilde{G}_{31}^j)\setminus\FraP,$$ $P\notin \overline{\FraP}$ (see Lemma \ref{deltaP} and Lemma \ref{prisma}). So, $\overline{\FraP}\setminus\FraP$ is a finite set included in $\H(\{O,\kappa_0P_1,\kappa_0P_2,\kappa_0P_3\}).$

\begin{theorem}\label{theorem_Buch_3D}
With the same assumptions as in Theorem \ref{theorem_CM_3D},
$\FraP$ is Buchsbaum if and only if
for any integer point $P$ belonging to the set $\big(\mathcal{G}\cup \H(\{O,k_3P_1,k_3P_2,k_3P_3\})\big)\setminus \overline{\FraP}$ there are no two different integers $i,j\in[3]$ such that $P+g_i,P+g_j\in\overline{\FraP}.$
\end{theorem}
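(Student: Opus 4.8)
The statement to prove, Theorem~\ref{theorem_Buch_3D}, is the Buchsbaum analogue of Theorem~\ref{theorem_CM_3D}, and the plan is to reduce it to that theorem via Theorem~\ref{RosalesBuchs}. By Theorem~\ref{RosalesBuchs}, $\FraP$ is Buchsbaum if and only if $\overline{\FraP}$ is Cohen--Macaulay, so it suffices to apply (a version of) Theorem~\ref{theorem_CM_3D} to the semigroup $\overline{\FraP}$ in place of $\FraP$. Thus the first step is to check that $\overline{\FraP}$ itself falls into one of the two admissible cases of Theorem~\ref{theorem_CM_3D}, i.e. that $\overline{\FraP}$ is a simplicial finitely generated semigroup whose cone is $L_{\R_\ge}(\FraP)=L_{\R_\ge}(\{g_1,g_2,g_3\})$ and whose vertex data on the extremal rays is of type $\mathcal{W}=\{P_1,P_2,P_3\}$ or $\mathcal{W}=\{P_1,P_2\}$, $\mathcal{W}_1=\{P_3\}$. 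This is exactly what the paragraph preceding the theorem records: on each ray $\tau_i$ where $\tau_i\cap\CaP$ is a point we have $\tau_i\cap\FraP=\tau_i\cap\overline{\FraP}$ generated by $g_i$, and where $\tau_i\cap\CaP$ is a segment the generator of $\tau_i\cap\overline{\FraP}$ is obtained from the finite set $\tau_i\setminus\FraP$; either way $\overline{\FraP}$ and $\FraP$ share the same cone and generators $g_1,g_2,g_3$ of that cone.

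The second step is the key geometric observation, already isolated before the theorem: for every integer $j\ge\kappa_0$ and every point $P$ in $(G_1^j\cup G_2^j\cup G_3^j\cup\widetilde G_{12}^j\cup\widetilde G_{23}^j\cup\widetilde G_{31}^j)\setminus\FraP$ one has $P\notin\overline{\FraP}$; this follows from Lemmas~\ref{deltaP} and~\ref{prisma}, which show that such a $P$ can be translated by a suitable $g_i$ to another point of the same (translated) piece not in $\FraP$, so some $P+g_i\notin\FraP$ and $P\notin\overline{\FraP}$. Consequently $\overline{\FraP}\setminus\FraP\subset\H(\{O,\kappa_0P_1,\kappa_0P_2,\kappa_0P_3\})$ is finite, and hence $\FraC_{\overline{\FraP}}\setminus\overline{\FraP}=\FraC_{\FraP}\setminus\overline{\FraP}$ is contained in the finite set $\FraC_\FraP\cap\H(\{O,k_3P_1,k_3P_2,k_3P_3\})$ together with the pieces $G_i^k$; more precisely, $\overline{\LL}_k$ does not change, only the ``membership'' question changes from $\FraP$ to $\overline{\FraP}$. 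The third step is then to run the proof of Theorem~\ref{theorem_CM_3D} verbatim with $\FraP$ replaced by $\overline{\FraP}$: using Theorem~\ref{C-M}(3), $\overline{\FraP}$ is Cohen--Macaulay iff for no $P\in\FraC_{\overline{\FraP}}\setminus\overline{\FraP}$ do there exist $i\ne j$ with $P+g_i,P+g_j\in\overline{\FraP}$; by Lemma~\ref{prisma} the points in the $\widetilde G$-pieces are automatically excluded, and by Lemma~\ref{esquinas} (applied to $\overline{\FraP}$, noting that $\overline{\FraP}$ is a subsemigroup of $\FraC_\FraP$ containing $\FraP$ so the translation arguments there go through unchanged) the points in $\cup_j(G_1^{k_3+j}\cup G_2^{k_3+j}\cup G_3^{k_3+j})$ can be pulled back, modulo $g_i$, into the finite set $\mathcal{G}$; what remains is the finite box $\H(\{O,k_3P_1,k_3P_2,k_3P_3\})$. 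This yields exactly the stated criterion: $\FraP$ is Buchsbaum iff no integer point $P$ in $(\mathcal{G}\cup\H(\{O,k_3P_1,k_3P_2,k_3P_3\}))\setminus\overline{\FraP}$ admits two distinct $i,j\in[3]$ with $P+g_i,P+g_j\in\overline{\FraP}$.

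The main obstacle I anticipate is not any single hard estimate but rather the bookkeeping needed to be sure that Lemma~\ref{esquinas} applies with $\overline{\FraP}$ in place of $\FraP$. That lemma's proof uses that the relevant obstruction sets are the pieces $G_i^m,\widetilde G_{ij}^m$ of $\overline{\LL}_m$ and that membership in $\FraP$ is stable under the translation $P\mapsto P+t g_i$ along the ray $\tau_i$ combined with Lemmas~\ref{deltaP} and~\ref{prisma}; one must check that ``$P+g_j\in\overline{\FraP}$'' propagates the same way, which is true because $\overline{\FraP}$ is closed under addition and $g_j\in\FraP\subset\overline{\FraP}$, so the identical translation argument shows $P-tg_i+g_j\in\overline{\FraP}$. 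Once this is in place, the finiteness of $\overline{\FraP}\setminus\FraP$ together with Lemma~\ref{esquinas}-for-$\overline{\FraP}$ reduces the Cohen--Macaulayness test for $\overline{\FraP}$ to the explicit finite set $\mathcal{G}\cup\H(\{O,k_3P_1,k_3P_2,k_3P_3\})$, and Theorem~\ref{RosalesBuchs} converts this into the Buchsbaum criterion, completing the proof.
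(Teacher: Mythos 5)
Your proposal is correct and follows essentially the same route as the paper, which simply states that the proof is analogous to that of Theorem \ref{theorem_CM_3D} applied to $\overline{\FraP}$ in place of $\FraP$, relying on Theorem \ref{RosalesBuchs} and the observation preceding the statement that points of the $G_i^j$ and $\widetilde{G}_{i\sigma(i)}^j$ pieces outside $\FraP$ also lie outside $\overline{\FraP}$. Your additional verification that Lemma \ref{esquinas} transfers to $\overline{\FraP}$ (since $\overline{\FraP}$ is a semigroup containing $\FraP$ and the translation arguments are unchanged) is a useful explicit check of a step the paper leaves implicit.
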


\begin{proof}
The proof is analogous to that of Theorem \ref{theorem_CM_3D}, but applied to $\overline{\FraP}$ instead of $\FraP.$
\end{proof}

\begin{corollary}
The affine simplicial convex polyhedron semigroup associated to any tetrahedron is Buchsbaum.
\end{corollary}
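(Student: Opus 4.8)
The plan is to invoke Theorem~\ref{RosalesBuchs}, so that proving the corollary reduces to showing that $\overline{\FraP}$ is Cohen--Macaulay; here I read ``tetrahedron'' in the standing context of this section, i.e. $\CaP$ is such that $\FraP$ is finitely generated and simplicial. The quickest route is then to observe that $\FraP$ is itself Cohen--Macaulay by Proposition~\ref{proposition_tetra_CM}, and that every Cohen--Macaulay ring is Buchsbaum --- every system of parameters of a Cohen--Macaulay ring being a regular sequence, and hence a weak sequence, since the successive quotients stay Cohen--Macaulay of positive depth through the last step. Combining this with Theorem~\ref{RosalesBuchs} gives that $\overline{\FraP}$ is Cohen--Macaulay and that $\FraP$ is Buchsbaum.

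If instead one wants an argument internal to the machinery of the paper, one reruns the proof of Proposition~\ref{proposition_tetra_CM} with $\overline{\FraP}$ in place of $\FraP$, via Theorem~\ref{C-M}. I would first use the observations recorded just before Theorem~\ref{theorem_Buch_3D}: for a tetrahedron the set $\overline{\FraP}\setminus\FraP$ is finite and contained in $\H(\{O,\kappa_0P_1,\kappa_0P_2,\kappa_0P_3\})$, so $\FraC_{\overline{\FraP}}=\FraC_\FraP$ and no transition piece $G_i^{j}$ or $\widetilde{G}_{i\sigma(i)}^{j}$ with $j\ge\kappa_0$ meets $\overline{\FraP}$. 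Then I would split into the two configurations a tetrahedron can realize: three of the vertices lie on $\tau_1,\tau_2,\tau_3$ and the fourth either lies on one of these rays, say $\tau_3$ (so $\mathcal{W}=\{P_1,P_2\}$, $\mathcal{W}_1=\{P_3\}$), or lies off all three (so $\mathcal{W}=\{P_1,P_2,P_3\}$); the remaining configurations, if they occur, are already handled by the two lemmas preceding Theorem~\ref{theorem_Buch_3D}. In the first configuration one has $\LL_k=\widetilde{G}_{12}^{k}$ for $k\ge\kappa_0$ (as in Proposition~\ref{proposition_tetra_CM}), so for any $a\in\FraC_\FraP\setminus\overline{\FraP}\subseteq\LL\cap\N^3$, iterating Lemma~\ref{prisma} $h_{P_1}$ and $h_{P_2}$ times shows that $a+g_1$ and $a+g_2$ both lie in some $\widetilde{G}_{12}^{k'}$ with $k'\ge\kappa_0$, hence are outside $\overline{\FraP}$; thus no pair of distinct $i,j\in[3]$ has $a+g_i,a+g_j\in\overline{\FraP}$, and Theorem~\ref{C-M} applies. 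In the second configuration I would subdivide $\CaP$ into the three tetrahedra $\CaP_1,\CaP_2,\CaP_3$ of Proposition~\ref{proposition_tetra_CM} (cut through $O$, the fourth vertex $D$ and $Q=(\text{line }OD)\cap\H(\{P_1,P_2,P_3\})$), each of the previous type, and argue as there using $\FraP=\FraP_1\cup\FraP_2\cup\FraP_3$.

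The delicate point --- and the reason the external route above is cleaner --- is exactly the three-corner configuration: the operator $S\mapsto\overline{S}$ does not commute with the decomposition $\FraP=\FraP_1\cup\FraP_2\cup\FraP_3$, so the reduction to the sub-tetrahedra is not purely formal. What one must actually check is that a translate $a+g_i$ of a point $a\in\overline{\FraP}$ lying in the cone of some $\CaP_k$ still lands inside $\FraP_k$, not merely in some $\FraP_\ell$ with $\ell\neq k$; this is controlled by the finiteness of $\overline{\FraP}\setminus\FraP$ and its confinement to $\H(\{O,\kappa_0P_1,\kappa_0P_2,\kappa_0P_3\})$, together with the fact that adding $g_1,g_2$ or $g_3$ moves an integer point of that simplex outward, into a region where $\overline{\FraP}$ and $\FraP$ coincide. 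If this verification looks unpleasant, the shortcut through Proposition~\ref{proposition_tetra_CM} and the fact that Cohen--Macaulay rings are Buchsbaum sidesteps it entirely.
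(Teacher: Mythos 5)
Your first (preferred) route is essentially the paper's proof: the paper likewise reduces everything to Proposition \ref{proposition_tetra_CM} and Theorem \ref{RosalesBuchs}, simply observing that $\FraP=\overline{\FraP}$ for a tetrahedron (which, as your argument implicitly shows, follows from Cohen--Macaulayness via condition (3) of Theorem \ref{C-M}, since any $a\in\overline{\FraP}\setminus\FraP\subset\FraC_\FraP\setminus\FraP$ would have $a+g_1,a+g_2\in\FraP$). The long second route, and the worry about the three-corner subdivision, are therefore unnecessary.
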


\begin{proof}
For such a convex polyhedron, the semigroups $\FraP$ and $\overline{\FraP}$ are equal, and $\FraP$ is Cohen--Macaulay by Proposition \ref{proposition_tetra_CM}.
\end{proof}

The following example provides us with a Buchsbaum simplicial affine
semigroup that it is not Cohen--Macaulay.

\begin{example}
Let $\CaP$ be the polyhedron
$$
\begin{multlined}
\H(\{(24/5, 12/5, 12/5), (8/3, 16/3, 8/3), (8/3, 8/3, 16/3), \\
(152/33, 152/33, 16/3), (152/33, 16/3, 152/33), (856/165, 68/15, 68/15)\}).
\end{multlined}$$
It is easy to check that $\CaP$ is not a tetrahedron.
Using the command {\tt msgCHSgr} of  \cite{PROGRAMA} we obtain that
its associated convex polyhedron semigroup $\FraP$
is minimally generated by the following of $71$ elements:
{\small
$$
\begin{array}{l}
\{(3, 3, 5), (3, 4, 4), (3, 5, 3), (4, 3, 3),
(4, 3, 4), (4, 4, 3), (4, 4, 4), (4, 4, 5),\\
(4, 5, 4), (5, 4, 4), (6, 6, 9), (6, 7, 8),
(6, 8, 7), (6, 9, 6), (8, 5, 7), (8, 7, 5),\\
(8, 8, 16), (8, 9, 15),(8, 10, 14), (8, 11, 13),
(8, 12, 12), (8, 13, 11), (8, 14, 10),\\
(8, 15, 9),(8, 16, 8), (9, 6, 6), (9, 6, 7), (9, 7, 6),
(9, 9, 9), (9, 9,   10),(9, 9, 16),\\
(9, 10, 9), (9, 10, 15), (9, 11, 14), (9, 12, 13), (9, 13, 12),
(9, 14, 11),(9, 15, 10), \\
(9, 16, 9), (10, 7, 7),(10, 8, 13), (10, 9, 9),(10, 10, 16),
(10, 11, 15),(10, 12, 14), \\
(10, 13, 8), (10, 13, 13), (10, 14, 12),(10, 15, 11),
(10, 16,10), (11, 11, 16), (11, 12, 15),\\
(11, 13, 14), (11, 14, 13), (11, 15, 12), (11, 16, 11),
(12, 12, 16), (12, 13, 15), (12, 14, 14), \\
(12, 15, 13),(12, 16, 12), (13, 8, 9), (13, 9, 8), (14, 8, 8),
(14, 9, 9), (18, 10, 11), \\
(18, 11, 10), (19, 10, 10),
(19, 11, 11), (24, 12, 12), (24, 13, 13)\}.
\end{array}
$$}

\noindent
Furthermore, the semigroup $\overline{\FraP}$ is equal to the convex polyhedron
semigroup associated to the tetrahedron with vertex set
$\{(24/5, 12/5, 12/5), (8/3, 16/3, 8/3), (8/3, 8/3, 16/3), (16/3, 16/3, 16/3)\}.$
Thus, $\overline \FraP$ is Cohen--Macaulay, and therefore $\FraP$ is Buchsbaum.

By using the command {\tt isBuchsbaum} of \cite{PROGRAMA}, the time required for checking that  $\FraP$ is Buchsbaum
was around a minute. In contrast, {\tt Macaulay2} (see \cite{Macaulay2})
took more than two hours.
\end{example}

\subsection*{Acknowledgement}
This publication and research was partially supported by grants by INDESS (Research University Institute for Sustainable Social Development), Universidad de C\'{a}diz, Spain.

\end{document}